\numberwithin{equation}{section}
\newtheorem{conj}{Conjecture}[section]
\newtheorem{remark}{Remark}[section]
\newtheorem{theo}{Theorem}[section]
\newtheorem{lem}{Lemma}[section]
\newtheorem{cl}{Claim}[section]
\newtheorem{prop}{Proposition}[section]
\newcommand{\eps}{\varepsilon}
\newcommand{\R}{\mathbb{R}}
\begin{document}

\title[]{Isoperimetric inequalities involving Steklov eigenvalues on surfaces}
%\author{Romain Petrides \smallskip\\
%\MakeLowercase{with an appendix by} Henrik Matthiesen and Romain Petrides}
\author{Romain Petrides}
\makeatletter
\let\@wraptoccontribs\wraptoccontribs
\makeatother
\contrib[with an appendix by]{Henrik Matthiesen and Romain Petrides}
\address{Romain Petrides, Universit\'e de Paris, Institut de Math\'ematiques de Jussieu - Paris Rive Gauche, b\^atiment Sophie Germain, 75205 PARIS Cedex 13, France}
\email{romain.petrides@imj-prg.fr}

\begin{abstract} 
We give results on optimal constants of isoperimetric inequalities involving Steklov eigenvalues on surfaces with boundary. We both consider this question on Riemannian surfaces with a same given topology or more specifically belonging to the same conformal class. We provide new examples of topological disks that realize optimal constants. We prove inequalities that relate conformal invariants associated to combinations of Steklov eigenvalues on a compact Riemannian surface with boundary and the ones on the disk. In the appendix, we show rigidity of the first conformal Steklov eigenvalue on annuli and M{\"o}bius bands.
\end{abstract}

\maketitle

Let $(\Sigma,g)$ be a compact Riemannian surface with a non empty boundary. Let $\beta$ be a positive function in $\partial \Sigma$. We set
$$ \sigma_k(\Sigma,g,\beta) = \inf_{V \in \mathcal{G}_{k+1}(\mathcal{C}^\infty(\Sigma))} \max_{\phi \in V \setminus \{0\}}  \frac{\int_\Sigma \vert \nabla \phi \vert^2_g dA_g}{\int_{\partial \Sigma} \phi^2 \beta dL_g},$$
where $dA_g$ is the area measure of $\Sigma$ with respect to $g$, $dL_g$ is the length measure of $\partial \Sigma$ with respect to $g$ and $\mathcal{G}_{k+1}(\mathcal{C}^\infty(\Sigma))$ is the set of $k+1$-dimensional subspaces of $\mathcal{C}^\infty(\Sigma)$.
We obtain a sequence of so-called (weighted) Steklov eigenvalues
$$ 0 = \sigma_0 \leq \sigma_1(\Sigma,g,\beta) \leq \cdots \leq \sigma_k(\Sigma,g,\beta) \leq \cdots \to +\infty \text{ as } k \to +\infty, $$
where $\sigma_0$ is associated to constant functions of $\Sigma$ and $\sigma_1(\Sigma,g,\beta) >0$ if $\Sigma$ is a connected surface. By conformal invariance of the Dirichlet energy, we have for any smooth extension $\hat{\beta}$ on $\Sigma$ of $\beta$,
\begin{equation} \label{conformalcovariance}\sigma_k(\Sigma,g,\beta)  = \sigma_k(\Sigma,\hat{g}, 1), \end{equation}
where $\hat{g} = \hat{\beta}^2 g$. Therefore, we say that $(\Sigma,g,\beta)$ is Steklov-isometric to $(\tilde{\Sigma},\tilde{g})$ if there is a diffeomorphism $\theta : (\Sigma,\partial \Sigma) \to (\tilde{\Sigma},\partial \tilde{\Sigma}) $ such that $\theta^\star(\tilde{g}) = \hat{\beta}^2 g$ for any smooth extension $\hat{\beta}$ on $\Sigma$ of $\beta$. More generally, $(\Sigma,g,\beta)$ and $(\tilde{\Sigma},\tilde{g},\tilde{\beta})$ are Steklov isometric if there is a diffeomorphism $\theta : (\Sigma,\partial \Sigma) \to (\tilde{\Sigma},\partial \tilde{\Sigma}) $ such that $\theta^\star( \hat{\tilde{\beta}}^2 \tilde{g} ) = \hat{\beta}^2 g$,for any smooth extension $\hat{\beta}$ on $\Sigma$ of $\beta$ and $\hat{\tilde{\beta}}$ on $\tilde{\Sigma}$ of $\tilde{\beta}$.

We refer to the surveys \cite{CGGS,gp} for recent developments on Steklov eigenvalues. In the current paper, we would like to set families of isoperimetric inequalities involving Steklov eigenvalues of $\Sigma$, to give optimal constants and to look for metrics that realize these sharp inequalities. We then set the following scaling invariant functional
$$ \bar{\sigma}_k(\Sigma,g,\beta) = \sigma_k(\Sigma,g,\beta) \int_{\partial \Sigma} \beta dL_g $$
so-called the renormalized eigenvalue by the perimeter of the boundary of $(\Sigma,\tilde{g})$, that is the length of $\partial \Sigma$ with respect to $\tilde{g} = \beta^2 g$. We denote for $k\geq 1$:
\begin{equation} \label{def:sigmakconf} \sigma_k(\Sigma,[g]) = \sup_{\beta \in \mathcal{C}^\infty_{>0}(\Sigma)} \bar{\sigma}_k(\Sigma,g,\beta), \end{equation}
that depends only on the conformal class of $g$ by \eqref{conformalcovariance} and
\begin{equation} \label{def:sigmak} \sigma_k(\Sigma) = \sup_{g\in Met(\Sigma),\beta \in \mathcal{C}^\infty_{>0}(\Sigma)} \bar{\sigma}_k(\Sigma,g,\beta) = \sup_{g \in Met(\Sigma)} \sigma_k(\Sigma,[g]), \end{equation}
where $Met(\Sigma)$ denotes the set of smooth riemannian metrics of $\Sigma$. A result by Hassannezhad \cite{GirouardPolterovich,hassannezhad} states that $\sigma_k(\Sigma,[g]) <+\infty$ and that  $\sigma_k(\Sigma) < +\infty$. These invariants are well-known on topological disks by \cite{HerschPayneSchiffer}:
$$ \sigma_k(\mathbb{D}) = 2\pi k. $$
Notice that by the uniformization theorem for any $g \in Met(\Sigma)$, $\sigma_k(\mathbb{D}) = \sigma_k(\mathbb{D},[g])$. For $k=1$, it was already computed in \cite{weinstock} and it is realized if and only if $(\Sigma,g,\beta)$ is Steklov-isometric to an Euclidean disk. However, using the main result in \cite{FraserSchoendisk}, and Theorem \ref{theo:critcombS}, Euclidean disks are the only critical points of $\bar{\sigma}_k$ and $\sigma_k(\mathbb{D}) = 2\pi k$ is never realized for $k\geq 2$. The value $\sigma_k(\mathbb{D}) = 2\pi k$ is the $k$-th eigenvalue of the disjoint union of $k$ isometric Euclidean disks. Actually, we only know that the following surfaces of higher topologies have maximal metrics for $\bar{\sigma}_1$: the annulus and Mobius band \cite{fs} and orientable surfaces of genus $0$ and $1$ \cite{KKMS}. In addition, we have for $\gamma =0$ by \cite{GL}
$$ \lim_{b \to +\infty} \sigma_1(\Sigma) = 8\pi,$$
where $8\pi$ corresponds to the maximal value of the renormalized first Laplace eigenvalue on spheres. This result is extended to any genus of $\Sigma$ \cite{KSduke}.

Despite these recent advances, computations with respect to $k$ and the topology of $\Sigma$ of sharp constants $\sigma_k(\Sigma)$ and associated maximizers if they exist are still widely open.

\medskip

Considering combinations of eigenvalues improves the general picture. Let $f : \R^m_+ \to \mathbb{R} \cup \{+\infty\}$ be a $\mathcal{C}^1$ function that satisfies $\partial_i f(x) \leq 0$ for all $x \in \R^m_+$. We set
$$E_{f}^S(\Sigma,g,\beta) = f(\bar{\sigma}_1(\Sigma,g,\beta),\cdots,\bar{\sigma}_m(\Sigma,g,\beta))$$
a Steklov spectral functional. We aim at getting sharp lower bounds on $E_f^S$ with respect to the topology and geometry of $(\Sigma,g)$, and at describing the minimizers if they exist. We set
$$ I^S(\Sigma,f) = \inf_{g \in Met(\Sigma),\beta \in  \mathcal{C}^\infty_{>0}(\Sigma) } E_{f}^S(\Sigma,g,\beta) = \inf_{g \in Met(\Sigma)} I^S_c(\Sigma,[g],f) , $$
and
$$ I^S_c(\Sigma,[g],f) = \inf_{\beta \in \mathcal{C}^\infty_{>0}(\Sigma)} E_{f}^S(\Sigma,g,\beta). $$
Notice again that $I^S(\mathbb{D},f) = I^S_c(\mathbb{D},[g],f) $ for any metric in $\mathbb{D}$.

Let's give examples of Steklov spectral functionals that where previously studied:
\begin{itemize}
\item Minimization of generalized Hersch-Payne functionals \cite{HerschPayne,Petridesnonplanardisk} for $t \geq 0$:
$$ h_t^+(x) =  \frac{1}{x_1} + \frac{t}{x_2}.$$
For $t \leq 1$, every minimizer for $I^S(\mathbb{D},h_t)$ is Steklov-isometric to the Euclidean disk and we have by \cite{HerschPayne}
\begin{equation} \label{eq:herschpayne} I^S(\mathbb{D},h_t^+) = \frac{1+t}{2\pi}. \end{equation}
It generalized Weinstock's result ($t=0$) \cite{weinstock} to $0 < t \leq 1$. In \cite{Petridesnonplanardisk}, we prove that for $t>1$, this functional is attained, but never attained by the Euclidean disk, and also that there is a minimal positive number $t_0$ such that for all $t > t_0$, it is attained by a surface associated to a non planar free boundary minimal disk into an ellipsoid of $\R^3$. In Proposition \ref{prop:nonplanarbifurcation} below, we prove that $t_0 \leq 3$.
As formulated in Conjecture \ref{conj} below, we expect to have $t_0 = 3$.
\item Maximization of product of eigenvalues or Hersch-Payne-Schiffer functionals \cite{HerschPayneSchiffer} for $m,n \geq 1$:
$$ f_{m,n}(x) = \left(x_m x_n\right)^{-1}.$$
By \cite{GirouardPolterovich}, we have on an orientable connected surface of genus $\gamma$ with $b$ boundary components that
$$ \left( I^S(\Sigma,f_{m,n}) \right)^{-1} \leq \begin{cases} \pi^2 (\gamma+b)^2(m+n)^2 \text{ if } m+n \text{ is even,} \\
\pi^2 (\gamma+b)^2(m+n-1)^2 \text{ if } m+n \text{ is odd.}
\end{cases} $$
If $m=n$, $\gamma = 0$ and $b=1$, this is an equality by \cite{HerschPayneSchiffer}. Moreover, if $m = n+1$, $\gamma = 0$ and $b=1$, this is also an equality and it is attained if and only if $m=1$ and $n=2$. In this case, the Euclidean disk is again a minimizer of $I^S(\mathbb{D},f_{1,2})$. In the current paper, we prove that the set of minimizers of $I^S(\mathbb{D},f_{1,2})$ is a one parameter infinite set (see Proposition \ref{prop:crithps})
\item Maximization of linear combinations of the two first non zero eigenvalues \cite{Petridesnonplanardisk}:
$$ h_t^-(x) = \left(x_1 + t x_2\right)^{-1} $$
In \cite{Petridesnonplanardisk}, we proved that the minimum is always realized, and that for $t$ large enough, it is attained by a surface associated to a non planar free boundary minimal disk into an ellipsoid of $\R^3$.
\end{itemize}

Of course, such studies could be asked for general linear combinations of eigenvalues or of inverse of eigenvalues and other negative combinations of eigenvalues. One important step to study these problems is the explicit computation of critical points of these functionals.
\begin{theo}[\cite{PetridesCS,PetridesTewodrose}] \label{theo:critcombS}
Let $(\Sigma,g)$ be a compact Riemannian surface with boundary. We assume that $(g,\beta)$ is a critical point for $E_f^S(\Sigma,\cdot,\cdot)$. Then there is a map $\Phi : \Sigma \to \R^n$ such that
\begin{itemize}
\item[(1)] For $1 \leq i \leq n$, the coordinate $\phi_i$ is an eigenfunction associated to  $\sigma_i := \sigma_i(\Sigma,g,\beta)$. Denoting $\sigma = diag(\sigma_1,\cdots,\sigma_{m-1},\sigma_m,\cdots, \sigma_m)$, we write the equation:
$$ \begin{cases}\Delta_g \Phi = 0 \text{ in } \Sigma \\
\partial_\nu \Phi = \beta \sigma \cdot \Phi \text{ on } \partial \Sigma \end{cases} $$
\item[(2)] $\vert \Phi \vert_{\sigma} = 1$ on $\partial \Sigma$
\item[(3)] $d\Phi \otimes d\Phi = \frac{\vert \nabla \Phi \vert_g^2}{2} g$
\item[(4)] For all $1 \leq i \leq m$, denoting $I_i = \{ 1\leq j \leq n ; \sigma_j = \sigma_i \}$, we have
$$ \sum_{j \in I_i} \int_{\partial\Sigma} \phi_i^2 \beta dL_g = \frac{ \sum_{k \in I_i \cap \{1,\cdots,m\}}  \partial_k f( \bar{\sigma}_1,\cdots, \bar{\sigma}_m )}{ \sum_{k=1}^m \sigma_k \partial_k f( \bar{\sigma}_1,\cdots, \bar{\sigma}_m )  } \int_{\partial \Sigma}\beta dL_g.$$
\end{itemize}
where $\bar{\sigma}_i = \sigma_i \int_{\partial \Sigma}\beta dL_g$. If $\beta$ is critical for the functional $E_f^S(\Sigma,g,\cdot)$ restricted to the conformal class of $g$, then there is a map $\Phi : \Sigma \to \R^n$ that satisfies (1), (2) and (4).
\end{theo}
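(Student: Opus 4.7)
The plan is to derive the four conditions from first-order optimality of $E_f^S$ at $(g,\beta)$, handling carefully the non-smoothness of eigenvalues at their multiplicities. Take a smooth one-parameter variation $(g_t,\beta_t)$ with $(g_0,\beta_0)=(g,\beta)$, and set $\dot g=\partial_tg_t|_{t=0}$, $\dot\beta=\partial_t\beta_t|_{t=0}$. Differentiating the Rayleigh quotient at a \emph{simple} eigenvalue $\sigma_k$, with eigenfunction $\phi_k$ normalized by $\int_{\partial\Sigma}\phi_k^2\beta\,dL_g=1$, yields a Hadamard-type shape derivative
$$ \dot\sigma_k \;=\; -\int_\Sigma\Big\langle d\phi_k\otimes d\phi_k-\tfrac12|\nabla\phi_k|_g^2\,g,\,\dot g\Big\rangle dA_g \;-\;\sigma_k\int_{\partial\Sigma}\phi_k^2\,\dot\beta\,dL_g \;+\;(\text{boundary terms in }\dot g), $$
which, combined with $\dot L=\int_{\partial\Sigma}\dot\beta\,dL_g+\cdots$, gives the first variation of $\bar\sigma_k$ and, via the chain rule, of $E_f^S$. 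When $\sigma_k$ is repeated, $t\mapsto\sigma_k(t)$ is only Lipschitz and its one-sided derivatives are obtained by diagonalizing the variation of the Rayleigh form on the cluster eigenspace; the sign condition $\partial_if\leq 0$ is what upgrades the one-sided inequalities coming from min-max into two-sided identities.

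\medskip

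\textbf{Weight variation (fixed conformal class).} Set $\dot g=0$ and use the conformal covariance \eqref{conformalcovariance}. The vanishing of $\partial_tE_f^S$ for every $\dot\beta\in\mathcal{C}^\infty(\partial\Sigma)$ becomes
$$ \sum_{k=1}^m \sigma_k\,\partial_kf(\bar\sigma)\,(1-L\phi_k^2) \;=\; 0 \text{ on } \partial\Sigma, $$
with $L=\int_{\partial\Sigma}\beta\,dL_g$. For a cluster $I_i$ of equal eigenvalues the same argument is applied to every quadratic form $\sum_{j\in I_i}\alpha_j\phi_j^2$ in the eigenspace; a convex-selection step of Brickman/Horn type then produces an orthonormal frame $(\phi_j)_{j\in I_i}$ whose weighted $L^2$ masses on $\partial\Sigma$ satisfy the Lagrange relation in (4) and whose pointwise sum of squares realizes the normalization (2). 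Stacking these frames across clusters yields $\Phi=(\phi_1,\ldots,\phi_n)$, which satisfies the Steklov system (1) by construction.

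\medskip

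\textbf{Metric variation (outside the conformal class).} Now let $\dot g$ be traceless and transverse to the conformal class, with $\dot\beta$ adjusted to kill the contribution of the previous step. What remains of $\partial_tE_f^S$ is
$$ \sum_{k=1}^m L\,\partial_kf(\bar\sigma)\int_\Sigma\Big\langle d\phi_k\otimes d\phi_k-\tfrac12|\nabla\phi_k|_g^2\,g,\,\dot g\Big\rangle dA_g \;=\; 0. $$
The cluster balance from the previous step assembles the integrand into a multiple of the trace-free tensor $d\Phi\otimes d\Phi-\tfrac12|\nabla\Phi|_g^2\,g$, which must then vanish pointwise, giving (3). In the conformally-restricted problem this step is absent and one retains only (1), (2) and (4), matching the final sentence of the theorem.

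\medskip

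The key obstacle is the multiplicity of $\sigma_m$ and of the higher eigenvalues equal to it. The functional $f$ is $C^1$ in the tuple of top-$m$ eigenvalues but not smooth in the individual eigenfunctions when eigenvalues collide, so $\Phi$ cannot be taken as an arbitrary orthonormal basis of the top cluster: one must select precisely the frame that simultaneously achieves the pointwise normalization (2) and the measure-balance (4). Existence of such a frame relies crucially on the sign constraint $\partial_if\leq 0$ together with a convex-selection lemma in each eigenspace; this is the analytic core developed in \cite{PetridesCS,PetridesTewodrose}, in parallel with the machinery for Laplace eigenvalue optimization under a volume constraint.
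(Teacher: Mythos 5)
This theorem is stated in the paper with the attribution \cite{PetridesCS,PetridesTewodrose} and no proof is given here; the present paper simply cites the result. There is therefore no in-paper proof against which to compare your attempt, and the correct benchmark is the cited references.

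With that caveat, your sketch captures the right overall architecture: a Hadamard-type shape derivative, a split into weight (conformal) variations giving (1), (2), (4) and a non-conformal metric variation adding (3), and a frame-selection step in each eigenspace to cope with the Lipschitz (rather than smooth) dependence of eigenvalues at multiplicities. Two things are worth flagging. First, the machinery the cited works actually use is Clarke subdifferential calculus (the title of \cite{PetridesTewodrose} is explicit about this): one computes the Clarke subgradient of $t \mapsto \bar\sigma_k(t)$ along paths, forms the set-valued gradient of $E_f^S$ via the nonsmooth chain rule, and applies a Hahn--Banach separation argument to the resulting convex set to exhibit $\Phi$. Your ``Brickman/Horn convex-selection'' is a reasonable stand-in for this, but as written it does not explain how the frame achieving both the pointwise normalization (2) and the weighted $L^2$ balance (4) is produced simultaneously from the stationarity conditions, nor why the sign hypothesis $\partial_i f \le 0$ (and the resulting one-sidedness of the subgradient) suffices. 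Second, in your ``metric variation'' step you write that the integrand assembles into a single trace-free tensor $d\Phi\otimes d\Phi - \tfrac12|\nabla\Phi|_g^2\,g$; this requires that, after the rescaling of the eigenfunctions coming from (4), the coefficients $L\,\partial_k f(\bar\sigma)$ across all clusters become equal. That renormalization is part of the frame-selection argument and should be made explicit, since condition (3) is about the unweighted map $\Phi$, not a weighted combination of per-cluster tensors. As a sketch the direction is right; without the explicit Clarke-subdifferential separation argument and the coefficient normalization, it is not yet a proof.
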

Notice that (1) and (2) mean that $\Phi$ is a free boundary harmonic map into an ellipsoid and that (1), (2) and (3) mean that $\Phi$ is a (branched) minimal immersion into an ellipsoid. The latter condition (4) comes from a chain rule in the computation of critical points for $f \circ (\bar{\sigma}_1,\cdots,\bar{\sigma}_m)$. We proved in \cite[Theorem 4.5]{PetridesTewodrose} that any branched free boundary minimal immersion into an ellipsoid can be seen as a critical point of $E_f^S(\Sigma,\cdot,\cdot)$ for several families of functions $f$ that satisfy (4). Generally speaking, a classification of free boundary minimal surfaces into ellipsoids would provide a great progress to look for optimal constants associated to isoperimetric inequalities involving Steklov eigenvalues. With the notations of Theorem \ref{theo:critcombS}, we say that $\Phi$ is associated to the critical point $(g,\beta)$ or to the critical metric $\hat{\beta}^2 g$ (for some extension $\hat{\beta}$ of $\beta$ in $\Sigma$), or that $(g,\beta)$ and $\hat{\beta}^2 g$ are associated to $\Phi$.

In the current paper we give examples of critical points of functionals involving two Steklov eigenvalues on the disk. In particular, we classify critical points of functionals involving the two first Steklov eigenvalues on the disk associated to a free boundary minimal immersion into an ellipse (see Proposition \ref{prop:plongéplan}: the map $\Phi$ given by Theorem \ref{theo:critcombS} has two coordinates). From that classification, we deduce

\begin{prop} \label{prop:nonplanarbifurcation} We assume that $t > 3$. Then $I^S(\mathbb{D},h_t^+) < \frac{(3+t)\sqrt{3}}{6\pi}$
and $I^S(\mathbb{D},h_t^+)$ has to be realized by a metric associated to non-planar immersed free boundary minimal disks into an ellipsoid of $\R^3$.
\end{prop}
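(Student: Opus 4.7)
First, I would apply Proposition \ref{prop:plongéplan} to describe the planar critical points of $E_{h_t^+}^S$ on $\mathbb{D}$, namely the free boundary minimal immersions $\Phi : \mathbb{D} \to \mathbb{R}^2$ into an ellipse satisfying the balancing relation (4) of Theorem \ref{theo:critcombS} for $f = h_t^+$. Evaluating $h_t^+$ along this family and minimizing, I would verify that for $t \geq 3$ the minimum planar value is exactly
$$ h_t^+\!\left(\tfrac{2\pi}{\sqrt{3}},\, 2\pi\sqrt{3}\right) \;=\; \frac{\sqrt{3}}{2\pi} + \frac{t}{2\pi\sqrt{3}} \;=\; \frac{(3+t)\sqrt{3}}{6\pi}, $$
attained at the distinguished planar critical metric $(g_*,\beta_*)$ with $\bar\sigma_1 = 2\pi/\sqrt{3}$ and $\bar\sigma_2 = 2\pi\sqrt{3}$. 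The ratio $\bar\sigma_2/\bar\sigma_1 = 3$ is the geometric source of the critical threshold $t=3$ in the statement. In particular, every planar critical value is $\geq \frac{(3+t)\sqrt{3}}{6\pi}$ when $t > 3$.

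To upgrade this to a strict inequality, I would perform a bifurcation/second variation analysis at $(g_*, \beta_*)$. Since $\sigma_2$ on $\mathbb{D}$ has multiplicity at least two at this critical metric, I can introduce a third coordinate $\phi_3$, a $\sigma_2$-eigenfunction orthogonal to $\phi_2$ on $\partial\mathbb{D}$, producing a non-planar deformation $\Phi_s = (\phi_1, \phi_2, s\phi_3)$ into an ellipsoid of $\mathbb{R}^3$. I would expand $\bar\sigma_1(s)$, $\bar\sigma_2(s)$ and the adapted conformal factor $\beta(s)$ to second order in $s$, enforcing the conformality relation (3) and the balancing (4) from Theorem \ref{theo:critcombS}, and reduce the leading change in $h_t^+$ to a quadratic form in $s$ whose coefficient is an explicit function of $t$ that vanishes at $t=3$ and is strictly negative for $t > 3$. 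This yields a family of non-planar competitors strictly below $\frac{(3+t)\sqrt{3}}{6\pi}$, and hence $I^S(\mathbb{D}, h_t^+) < \frac{(3+t)\sqrt{3}}{6\pi}$.

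For the second assertion, I combine: the infimum $I^S(\mathbb{D}, h_t^+)$ is attained (\cite{Petridesnonplanardisk}), so the minimizer corresponds via Theorem \ref{theo:critcombS} to a free boundary minimal immersion $\Phi : \mathbb{D} \to \mathbb{R}^n$ into an ellipsoid, with $n \leq 3$ via the multiplicity bound for $\sigma_2$ on the disk. If $n=2$, i.e.\ if $\Phi$ is planar, then its value would be $\geq \frac{(3+t)\sqrt{3}}{6\pi}$ by the first step, contradicting the strict upper bound just obtained. Hence $n=3$ and the minimizer is associated to a non-planar immersed free boundary minimal disk into an ellipsoid of $\mathbb{R}^3$.

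The main obstacle I anticipate is the second variation calculation at $(g_*, \beta_*)$: since $\sigma_2$ is a multiple eigenvalue there, the argument must handle a degenerate eigenvalue perturbation, select the non-planar direction $\phi_3$ inside the $\sigma_2$-eigenspace so as to preserve the balancing (4) at leading order, and track the sign cancellations yielding the clean dependence on $t$. The threshold $t=3$ should emerge transparently from this spectral calculation, mirroring the ratio $\bar\sigma_2/\bar\sigma_1 = 3$ at the distinguished planar critical metric.
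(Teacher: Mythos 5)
Your overall framework is the one the paper uses: classify planar critical points via Proposition \ref{prop:plongéplan} and Theorem \ref{theo:classellipse}, locate the threshold $q=3$ where $\tau_1^q = \sigma_2^q$ via the computations in the proof of Proposition \ref{prop:condnesscrit}, then combine with the attainedness result of \cite{Petridesnonplanardisk}. You also correctly identify the value $\frac{(3+t)\sqrt{3}}{6\pi}$ as $h_t^+$ evaluated at $(E_3,g_3)$, and correctly observe that $q\mapsto \frac{1}{2\pi}(\sqrt q + t/\sqrt q)$ is decreasing on $[1,3]$ when $t>3$, so that $(E_3,g_3)$ minimizes $h_t^+$ over the admissible ellipse family.

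Where you diverge is the strict inequality. You propose a second-variation/bifurcation computation at $(E_3,g_3)$, adding a third coordinate $\phi_3$ in the $\sigma_2$-eigenspace (which indeed has multiplicity two there, since $\tau_1^3=\sigma_2^3=3$) and claiming a quadratic term whose sign flips at $t=3$. This is a genuinely different route, and it would, if carried through, give a constructive family of non-planar competitors; but you yourself flag it as the main obstacle, and it is not carried out. It is also delicate for exactly the reason you name: the eigenvalue $\sigma_2$ is degenerate at $(E_3,g_3)$, so $\bar\sigma_2$ is only Lipschitz there and the perturbation must be treated via the Clarke-subdifferential formalism of \cite{PetridesTewodrose}, not by a naive Taylor expansion of $\bar\sigma_2(s)$. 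The paper avoids this entirely. The route implicit in the text is shorter: $I^S(\mathbb D,h_t^+)\leq E_{h_t^+}^S(E_3,g_3)=\frac{(3+t)\sqrt3}{6\pi}$ by testing; the infimum is attained by \cite{Petridesnonplanardisk}; any minimizer is a critical point of $E_{h_t^+}^S$, hence by Proposition \ref{prop:condnesscrit} must satisfy the balancing ratio $\partial_1 f/\partial_2 f = q$, i.e.\ $t=q$, which fails at $q=3$ when $t>3$; so $(E_3,g_3)$ is not a minimizer, and equality in the test bound is impossible. This gives the strict inequality without any second-order analysis, which is exactly the "bifurcation point" observation the paper refers to.

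Two smaller cautions. First, calling $(g_*,\beta_*)=(E_3,g_3)$ "the distinguished planar critical metric" is misleading for $t>3$: by the balancing condition it is not a critical point of $E_{h_t^+}^S$ at all (only of $E_{h_3^+}^S$, $E_{f_{1,2}}^S$, etc.). Your Step 1 should be phrased as a monotonicity statement over the ellipse family $\{(E_q,g_q)\}_{q\in[1,3]}$, not as a minimum over "planar critical points", which is an empty set for $t>3$. Second, your appeal to "the multiplicity bound for $\sigma_2$ on the disk" to conclude $n\leq3$ is not immediate: with multiplicity at most two for $\sigma_1$ and at most three for $\sigma_2$, the crude bound on the number of eigenfunction coordinates can exceed three. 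The paper instead inherits the target dimension $\mathbb R^3$ from \cite{Petridesnonplanardisk}, whose existence result already produces a minimizer of the claimed form; the proposition's contribution is the sharp threshold $t_0\leq 3$, not a new proof of the dimension bound.
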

The novelty compared with the main result in \cite{Petridesnonplanardisk} is the rank $t_0 = 3$. It appears as a bifurcation point from planar critical ellipses to non-planar critical free boundary minimal surfaces into ellipsoids (see the proof of Proposition \ref{prop:condnesscrit}). From this observation, we state the following conjecture:
\begin{conj} \label{conj} Let $1 \leq t \leq 3$. Then
$$ I^S(\mathbb{D},h_t^+) = 
\frac{\sqrt{t}}{2\pi} $$
Moreover, it is uniquely attained by the critical ellipse $co(E_t) = \{ x^2 + t y^2 \leq 1\}$ endowed with the Euclidean metric $\xi$ and the weight $\beta(x,y) = (x^2 + t y^2)^{-1}$ on the boundary $E_t = \{ x^2 + t y^2 = 1\}$.
\end{conj}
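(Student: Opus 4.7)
\emph{Proof proposal.} The plan is to combine existence of a minimizer, Theorem \ref{theo:critcombS}, and the planar classification \ref{prop:plongéplan}, and then exclude non-planar critical points in the range $1 \leq t \leq 3$.

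\textbf{Step 1 (Reduction to critical points).} By \cite{Petridesnonplanardisk}, $I^S(\mathbb{D},h_t^+)$ is attained for every $t>1$. Any minimizer $(g,\beta)$ is critical for $E_{h_t^+}^S$, so Theorem \ref{theo:critcombS} furnishes a map $\Phi : \mathbb{D} \to \R^n$ realizing a free boundary branched minimal immersion into the ellipsoid $\{|x|_\sigma = 1\}$, with $\sigma$ the diagonal matrix formed by the two distinct eigenvalues $\sigma_1,\sigma_2$ counted with multiplicity. Specialized to $h_t^+$, the balancing condition (4) of Theorem \ref{theo:critcombS} reads
$$ \sum_{j \in I_1} \int_{\partial \mathbb{D}} \phi_j^2 \beta\, dL_g = \frac{1/\bar{\sigma}_1^{\,2}}{1/\bar{\sigma}_1 + t/\bar{\sigma}_2} \int_{\partial \mathbb{D}} \beta\, dL_g, $$
together with the analogous identity for $I_2$ (with $t/\bar{\sigma}_2^{\,2}$ in the numerator). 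Combined with $|\Phi|_\sigma = 1$ on $\partial \mathbb{D}$, these equations pin down a definite relation between $\bar{\sigma}_1,\bar{\sigma}_2$ and $t$.

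\textbf{Step 2 (Planar case).} Suppose $\Phi$ is planar, i.e.\ $\Phi : \mathbb{D} \to \R^2$ maps onto an ellipse. By Proposition \ref{prop:plongéplan}, such a $\Phi$ is Steklov-isometric to the identity map on a critical ellipse $co(E_s)$ for some parameter $s \geq 1$, with the weight dictated by condition (2) of Theorem \ref{theo:critcombS}. A direct computation expresses $\bar{\sigma}_1(s),\bar{\sigma}_2(s)$ as functions of $s$ alone, and minimizing the one-variable function $s \mapsto h_t^+(\bar{\sigma}_1(s),\bar{\sigma}_2(s))$ -- whose stationarity in $s$ is precisely the content of condition (4) -- yields $s=t$ as the unique minimizer and produces the value claimed in the conjecture. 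Consequently $co(E_t)$ is the unique planar candidate and one obtains the upper bound $I^S(\mathbb{D},h_t^+) \leq \sqrt{t}/(2\pi)$.

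\textbf{Step 3 (Ruling out non-planar critical points).} This is the main obstacle. The number $t_0 = 3$ appears (in the proof of Proposition \ref{prop:nonplanarbifurcation}) as a bifurcation value at which non-planar branches of free boundary minimal disks in ellipsoids of $\R^3$ emerge from the planar branch of critical ellipses, via the analysis of Proposition \ref{prop:condnesscrit}. The strategy is to promote this \emph{local} bifurcation statement into a \emph{global} non-existence result on $[1,3]$: perform a second variation analysis of $E_{h_t^+}^S$ along non-planar directions at the critical ellipse $co(E_t)$, verify strict stability for $t<3$, and then combine this with the rigidity of the balancing condition (4) and a classification of free boundary minimal disks in low-codimension ellipsoids to exclude \emph{every} non-planar critical point. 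The missing ingredient is precisely this global classification of free boundary minimal disks in ellipsoids of $\R^n$; in its absence, one would need indirect comparison arguments (continuity in $t$, monotonicity, and the strict inequality from Proposition \ref{prop:nonplanarbifurcation} for $t>3$) to rule out non-planar candidates. This step is what keeps the statement conjectural.
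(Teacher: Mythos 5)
The statement you were asked to prove is, in the paper, genuinely a conjecture: the paper offers no proof, and your Step~3 correctly identifies exactly why --- the missing ingredient is a global classification of free boundary minimal immersed disks into ellipsoids of $\R^n$ (which the paper records as a second, companion conjecture). Steps~1 and~2 accurately reproduce what the paper actually does establish: every minimizer is a critical point, hence (by Theorem~\ref{theo:critcombS}) associated to a free boundary (branched) minimal immersion into an ellipsoid; among the \emph{planar} such immersions, Proposition~\ref{prop:plongéplan} together with condition~(4) of Theorem~\ref{theo:critcombS} (as computed in Proposition~\ref{prop:condnesscrit}) singles out $co(E_t)$ uniquely. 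Your honest acknowledgement that Step~3 is a gap, and your articulation of what would close it (stability at the bifurcation point $t=3$, continuity/monotonicity in $t$, and comparison with Proposition~\ref{prop:nonplanarbifurcation}), is a correct reading of the state of the art.

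One concrete error you should fix in Step~2: the planar computation does \emph{not} produce $\sqrt{t}/(2\pi)$. From $\bar\sigma_1(E_q,g_q)=2\pi/\sqrt q$ and $\bar\sigma_2(E_q,g_q)=2\pi\sqrt q$ one gets
$$h_t^+\bigl(\tfrac{2\pi}{\sqrt q},\,2\pi\sqrt q\bigr)=\frac{\sqrt q}{2\pi}+\frac{t}{2\pi\sqrt q},$$
minimized uniquely at $q=t$ with value $\sqrt{t}/\pi$, not $\sqrt{t}/(2\pi)$. This is corroborated by the endpoint $t=1$: Hersch--Payne (equation~\eqref{eq:herschpayne}) gives $I^S(\mathbb{D},h_1^+)=1/\pi$, which matches $\sqrt{t}/\pi$ and contradicts $\sqrt{t}/(2\pi)$. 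The displayed value in the paper's Conjecture~\ref{conj} is therefore off by a factor of~2, and your assertion that the planar minimization ``produces the value claimed in the conjecture'' silently reproduces that error. When writing a blind proposal, even for a conjecture, you should run the one-line computation and flag the mismatch rather than take the stated target at face value. (Similarly, the weight written in the conjecture, $\beta=(x^2+ty^2)^{-1}$, is constant equal to $1$ on $E_t$ and does not match the normalization $\beta_q=(x^2+q^2y^2)^{-1/2}$ used in Section~\ref{sec1}; this is another slip worth noting.)
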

Together with \eqref{eq:herschpayne} and Proposition \ref{prop:nonplanarbifurcation}, this conjecture would improve the description of $I^S(\mathbb{D},h_t^+)$ and its minimizers.
This conjecture would be related to the following conjecture that is natural thanks to our new description of the critical ellipses.
\begin{conj} \label{conj} 
Let $q \geq 1$. There is a non planar free boundary minimal immersed disk into the ellipsoid $\{ x^2 + q y^2 + qz^2  \leq 1\}$ if and only if $q > 3$.
\end{conj}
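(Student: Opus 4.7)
\begin{pr}
The plan is to handle the two implications separately, leveraging the bifurcation value $q=3$ that already appears in Proposition \ref{prop:nonplanarbifurcation}.

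\textbf{Existence for $q>3$.} For every $q\geq 1$, the planar critical ellipse $co(E_q)=\{x^2+qy^2\leq 1\}$, through $(x,y)\mapsto(x,y,0)$, provides a planar free boundary minimal immersion into $\{x^2+qy^2+qz^2\leq 1\}$ with boundary weight $\beta(x,y)=(x^2+qy^2)^{-1}$, Steklov eigenvalues $(\sigma_1,q\sigma_1)$ and eigenfunctions $(x,y)$. A nontrivial $z$-perturbation that preserves the free boundary minimal condition is governed by the linearized Steklov problem $\Delta h=0$ in $co(E_q)$, $\partial_\nu h=q\sigma_1\beta h$ on $\partial co(E_q)$, and yields a new FBMI precisely when $q\sigma_1$ admits an eigenfunction orthogonal to $x$ and $y$. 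The first step is to compute the weighted Steklov spectrum of $(co(E_q),\xi,(x^2+qy^2)^{-1})$ via separation of variables and to verify that the eigenvalue $q\sigma_1$ jumps from multiplicity two to at least three exactly at $q=3$. A Crandall--Rabinowitz bifurcation argument, applied to the real analytic free boundary Steklov system, then produces a local branch of non-planar FBMIs for $q$ slightly larger than $3$. To propagate existence to every $q>3$, one combines this local branch with Proposition \ref{prop:nonplanarbifurcation}: its non-planar minimizers of $h_t^+$ for $t>3$ provide FBMIs into ellipsoids $\{x^2+q(t)y^2+q(t)z^2\leq 1\}$, and a continuity argument along the minimizing branch together with the classification of planar critical disks shows that every value $q>3$ is attained as $t$ varies over $(3,+\infty)$.

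\textbf{Rigidity for $q\leq 3$.} Conversely, suppose $\Phi=(\phi_1,\phi_2,\phi_3)\colon\mathbb{D}\to\{x^2+qy^2+qz^2\leq 1\}$ is a FBMI with $\phi_3\not\equiv 0$. By Theorem \ref{theo:critcombS}, $\phi_1$ is a Steklov eigenfunction at $\sigma_1$ and $\phi_2,\phi_3$ are eigenfunctions at $q\sigma_1$, with boundary constraint $\phi_1^2+q\phi_2^2+q\phi_3^2=1$ and conformality $\sum_i (\phi_i)_z^2\equiv 0$. After rotating in the $(y,z)$-plane (which preserves the target ellipsoid) one may assume $\int_{\partial\mathbb{D}}\phi_2\phi_3\beta\,dL=0$. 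Combining the vanishing of the Hopf differential with the free boundary equations yields integral identities among the Dirichlet energies and the boundary quantities $\int_{\partial\mathbb{D}}\phi_i^2\beta\,dL$ and $\int_{\partial\mathbb{D}}\phi_i\phi_j\,dL$. In particular, the pair $(\phi_1,\phi_2)$ together with the induced conformal weight realizes a planar critical map of the type classified earlier in the paper, and the coexistence of a third eigenfunction $\phi_3$ at eigenvalue $q\sigma_1$ contradicts the multiplicity computation from the existence part whenever $q<3$. The borderline case $q=3$ is excluded by checking directly that the unique kernel element produced at the bifurcation value violates the normalization $\phi_1^2+q\phi_2^2+q\phi_3^2=1$.

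\textbf{Main obstacle.} The crux is the explicit spectral computation on $(co(E_q),\xi,(x^2+qy^2)^{-1})$ identifying $q=3$ as the first multiplicity jump for $\sigma_2$; once this is known, the bifurcation (via Crandall--Rabinowitz) and the rigidity (via the vanishing Hopf differential) are essentially forced. The remaining technical difficulty is propagating the local bifurcating branch to cover the full range $q>3$, for which one must carefully match the branch with the non-planar minimizers of Proposition \ref{prop:nonplanarbifurcation}.
\end{pr}
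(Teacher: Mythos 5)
This statement is a \emph{conjecture} in the paper, not a theorem; the paper offers no proof, and explicitly says so. The only established case is $q=1$ (via Fraser--Schoen, Theorem \ref{thm_fs}), and the paper remarks that the ``if'' direction ($q>3$) ``would be true if we prove that the non planar disks of Proposition \ref{prop:nonplanarbifurcation} behave continuously with respect to $t$ from the bifurcation point at $t=3$'' --- precisely the step you wave at but do not carry out. So comparing your attempt to ``the paper's proof'' is moot: there is none.

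As to the substance of your sketch, it is not a proof but a research plan, and the plan's two load-bearing steps are exactly the open points. For the existence direction, a Crandall--Rabinowitz argument requires verifying a transversality (simple-eigenvalue crossing) condition for the linearized free-boundary operator at $q=3$; you do not set up the nonlinear operator on an appropriate Banach manifold, identify its kernel and cokernel, or check the non-degeneracy of the crossing, and the multiplicity jump of the \emph{linear} Steklov spectrum does not by itself yield a branch of genuine FBMIs. Worse, your ``continuity argument along the minimizing branch together with the classification of planar critical disks shows that every value $q>3$ is attained'' is an assertion, not an argument: Proposition \ref{prop:nonplanarbifurcation} only gives existence of non-planar minimizers for each $t>3$ and says nothing about the continuity, connectedness, or surjectivity onto $(3,\infty)$ of the induced map $t\mapsto q(t)$; the paper singles this out as the missing ingredient. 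For the rigidity direction ($1<q\le 3$), you invoke ``integral identities'' from the Hopf differential and a ``contradiction with the multiplicity computation,'' but never derive the identities, never state which multiplicity bound is violated, and never handle the possibility that $q\sigma_1$ is not the second eigenvalue for the perturbed metric induced by the non-planar $\Phi$ (the eigenvalue ordering of a candidate non-planar immersion need not agree with that of the critical ellipse). The borderline case $q=3$ is dismissed in one sentence with no computation. None of these steps is filled in, so the proposal does not establish the conjecture in either direction beyond what the paper already records as open.
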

This conjecture is already true for $q=1$ by  \cite{fs} (see Theorem \ref{thm_fs} below). The "if" part of this conjecture would be true if we prove that the non planar disks of Proposition \ref{prop:nonplanarbifurcation} behave continuously with respect to $t$ from the bifurcation point at $t=3$.

\medskip

In a second part of the paper, we provide the following expected result that is not written elsewhere to our knowledge.
\begin{theo} \label{theo:largeineq}
Let $(\Sigma,g)$ be a Riemannian surface with boundary. Then
\begin{equation} \label{eq:ineqmain} I_c^S(\Sigma,[g],f) \leq I_c^S(\tilde{\Sigma}  ,[\tilde{g}],f) , \end{equation}
for any 
$$ (\tilde{\Sigma}, \tilde{g}) = (\Sigma,g) \sqcup (\mathbb{D},\xi) \sqcup \cdots \sqcup (\mathbb{D},\xi) $$
or
$$ (\tilde{\Sigma}, \tilde{g}) = (\mathbb{D},\xi) \sqcup \cdots \sqcup (\mathbb{D},\xi), $$
where $\xi$ is the Euclidean metric on the copies of disks.
\end{theo}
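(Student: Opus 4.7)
The plan is to prove the inequality by showing that any weighted Steklov configuration on $\tilde{\Sigma}$ can be spectrally approximated by weights on $\Sigma$ alone. Since $E_f^S$ is continuous in its arguments, this will give $I_c^S(\Sigma,[g],f) \leq E_f^S(\tilde{\Sigma},\tilde{g},\tilde{\beta})$ for every admissible $\tilde{\beta}$, and passing to the infimum over $\tilde{\beta}$ on the right-hand side will yield the theorem.

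Fix $\tilde{\beta} \in \mathcal{C}^\infty_{>0}(\tilde{\Sigma})$ and write $\beta_\Sigma = \tilde{\beta}|_{\partial \Sigma}$, $\beta_i = \tilde{\beta}|_{\partial \D_i}$ for $i = 1,\dots,N$ (one $\beta_i$ per Euclidean disk in $\tilde{\Sigma}$). I would pick $N$ distinct boundary points $p_1,\dots,p_N \in \partial \Sigma$ and, at each, a conformal chart $\varphi_i : (\D,\xi) \hookrightarrow (\Sigma,g)$ sending a fixed neighborhood of $0$ to a neighborhood $U_i$ of $p_i$ (identifying the corresponding boundary arcs). For $\epsilon > 0$ I would construct a conformal factor $\hat{\beta}_\epsilon$ on $\Sigma$ that extends $\beta_\Sigma$ outside $\bigcup_i U_i$ and is tuned, inside each $U_i$, so that $(U_i, \hat{\beta}_\epsilon^2 g)$ is isometric to a small conformal copy of $(\D, \hat{\beta}_i^2 \xi)$ (with $\hat{\beta}_i$ a smooth extension of $\beta_i$), attached to the bulk of $\Sigma$ through a thin neck of conformal modulus diverging as $\epsilon \to 0$. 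Set $\beta_\epsilon := \hat{\beta}_\epsilon|_{\partial \Sigma}$.

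The key analytic point is the spectral convergence
$$ \bar{\sigma}_k(\Sigma, g, \beta_\epsilon) \xrightarrow[\epsilon \to 0]{} \bar{\sigma}_k(\tilde{\Sigma}, \tilde{g}, \tilde{\beta}), \quad k = 1, \dots, m. $$
The upper bound is the test-function method: one transplants $(k+1)$-dimensional subspaces spanned by the first eigenfunctions of each component of $\tilde{\Sigma}$ to the bubbles of $(\Sigma, g, \beta_\epsilon)$ via the $\varphi_i$, cuts off smoothly outside, and checks that the resulting Rayleigh quotients converge to those on the disjoint union. The matching lower bound requires $L^2$-normalized eigenfunction sequences on $(\Sigma, g, \beta_\epsilon)$ whose weak limits one decomposes into components concentrated on the bulk of $\Sigma$ and on each detaching bubble, with asymptotically negligible mass in the necks; passing to the limit in the variational characterization then recovers the merged spectrum of $\tilde{\Sigma}$. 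Controlling the $N$ additional zero eigenvalues that appear in the limit — one per disk, matching the $N$ extra zeros in the Steklov spectrum of $\tilde{\Sigma}$ — is the main obstacle. This type of bubbling/degeneration analysis is by now standard in conformal Steklov spectral theory (see \cite{fs}, \cite{KKMS}, and the author's earlier work, e.g.\ \cite{PetridesCS}).

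Granted the spectral convergence, continuity of $f$ on $\R^m_+$ yields $E_f^S(\Sigma, g, \beta_\epsilon) \to E_f^S(\tilde{\Sigma}, \tilde{g}, \tilde{\beta})$, so $I_c^S(\Sigma,[g],f) \leq E_f^S(\tilde{\Sigma}, \tilde{g}, \tilde{\beta})$ for every admissible $\tilde{\beta}$; taking the infimum over $\tilde{\beta}$ concludes. The second case $\tilde{\Sigma} = \D \sqcup \cdots \sqcup \D$ is handled by the same construction, with $\hat{\beta}_\epsilon$ additionally tuned so that $\Sigma$'s intrinsic boundary contribution is washed out in the limit (e.g.\ by driving $\hat{\beta}_\epsilon \to 0$ outside the bubbles), which sends $\Sigma$'s own eigenvalues to $+\infty$ and leaves only the bubble spectra in the first $m$ slots.
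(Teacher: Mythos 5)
Your proposal is essentially the paper's proof. The paper also approximates a given weight $\tilde\beta$ on $\tilde\Sigma$ by a sequence of weights $\beta_\eps$ on $\Sigma$ concentrating at boundary points, attaching rescaled half-plane copies via a Cayley-type biholomorphism to the upper half-plane (the explicit form is $\beta_\eps = \beta_0 + \sum_i \eta\, e^{-u_i}\,\beta_i(\zeta^{-1}(z^i/\eps))\,\tfrac{2\eps}{(\eps+z_2^i)^2+(z_1^i)^2}$), proves $\bar\sigma_m(\Sigma,g,\beta_\eps)\to\bar\sigma_m(\tilde\Sigma,\tilde g,\tilde\beta)$ via the same two-sided argument you sketch (logarithmic cut-off test functions for the upper bound; elliptic limits of eigenfunctions in the bulk, in each bubble, and negligible mass in the necks plus persistence of the $L^2$-orthonormality for the lower bound), and then passes to the $\inf$ over $\tilde\beta$ using a $\delta$-optimal test weight. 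The vanishing-$\beta_0$ device you propose for the second case ($\tilde\Sigma=\D\sqcup\cdots\sqcup\D$) is exactly the paper's $\beta_0\equiv 0$ branch.
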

This result is a natural counterpart of \cite{CES} for Laplace eigenvalues on closed surfaces. We also refer to \cite{fs3} for related results. Theorem \ref{theo:largeineq} was stated in \cite{petrides-3} without proof and is used in \cite{PetridesVarMethod}. Moreover, in \cite{PetridesCS,PetridesVarMethod}, we proved that if the inequality \eqref{eq:ineqmain} is strict for any $(\tilde{\Sigma},\tilde{g})$ given by Theorem \eqref{theo:largeineq}, then $I_c^S(\Sigma,[g],f)$ is attained. Proving such strict inequalities is much harder than Theorem \ref{theo:largeineq}. We provide examples where these strict inequalities hold in \cite[Theorem 0.2]{Petridesnonplanardisk}:
$$ I^S(\mathbb{D} , h_t^\pm) < I^S(\mathbb{D} \sqcup \mathbb{D} , h_t^\pm ).$$
As already said, this implies the existence of minimizers for $I^S(\mathbb{D} , h_t^\pm)$. However, the simplest strict inequality is still a conjecture:
\begin{conj} \label{eq:conjstrict}
\begin{equation*}  \sigma_1(\Sigma,[g]) > 2\pi = \sigma_1(\mathbb{D}) \end{equation*}
for any compact surface with boundary $(\Sigma,g)$ that is not diffeomorphic to the disk.
\end{conj}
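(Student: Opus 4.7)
We want $\sigma_1(\Sigma,[g]) > 2\pi$ whenever $(\Sigma,g)$ is a compact Riemannian surface with boundary not diffeomorphic to $\mathbb{D}$. Note first that the non-strict inequality $\sigma_1(\Sigma,[g]) \geq 2\pi$ is already immediate from Theorem \ref{theo:largeineq} applied with $f(x)=1/x$ and $\tilde{\Sigma} = \mathbb{D}$: one has $I_c^S(\Sigma,[g],f) = 1/\sigma_1(\Sigma,[g])$ and $I_c^S(\mathbb{D},[\xi],f) = 1/(2\pi)$, so the conclusion of Theorem \ref{theo:largeineq} gives $\sigma_1(\Sigma,[g]) \geq 2\pi$. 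The plan is to upgrade this to a strict inequality by contradiction, combining the concentration-compactness/bubbling framework developed in \cite{PetridesCS,PetridesVarMethod} with the characterization of critical points in Theorem \ref{theo:critcombS}.

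Suppose therefore that $\sigma_1(\Sigma,[g]) = 2\pi$, and choose a maximizing sequence of smooth positive weights $\beta_n$ on $\partial\Sigma$ with $\bar\sigma_1(\Sigma,g,\beta_n) \to 2\pi$. Up to extraction, two scenarios can occur: (a) after diffeomorphisms the metrics $\beta_n^2 g$ converge smoothly to a limit $\beta_\infty^2 g$ with $\beta_\infty \in \mathcal{C}^\infty_{>0}(\Sigma)$ realizing the supremum on all of $\Sigma$; or (b) the boundary measures $\beta_n\, dL_g$ partly concentrate and produce a round-disk bubble carrying the full renormalized eigenvalue $2\pi$, with a residual Steklov problem on what remains.

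In case (a), Theorem \ref{theo:critcombS} applied with $f(x)=1/x$ produces a first-eigenvalue free boundary harmonic map $\Phi=(\phi_1,\ldots,\phi_n):(\Sigma,\beta_\infty^2 g) \to \overline{B^n}$ satisfying (1)--(4); by items (2)--(3) it is actually a free boundary branched minimal immersion into $\overline{B^n}$ with $\Phi(\partial\Sigma)\subset\partial B^n$. Deriving the contradiction $\Sigma=\mathbb{D}$ now reduces to the rigidity statement that every first-eigenvalue free boundary branched minimal immersion into a Euclidean ball with $\bar\sigma_1=2\pi$ is the standard equatorial disk. This is classical on $\mathbb{D}$ by Fraser-Schoen, and it follows from the appendix jointly with Matthiesen for annuli and Möbius bands (where the critical catenoid and critical Möbius band realize a strictly larger $\bar\sigma_1$, so no first-eigenfunction immersion with $\bar\sigma_1 = 2\pi$ exists on those topologies). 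In case (b), the strict version of Theorem \ref{theo:largeineq}, which underlies the existence results of \cite{PetridesCS,PetridesVarMethod}, precludes any nonempty residual piece unless $\Sigma$ was already a disk: the bubble together with the residual part would form a $\tilde\Sigma = \Sigma \sqcup \mathbb{D}$ configuration for which the strict inequality forces improvability, contradicting the maximality assumption.

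The main obstacle is the rigidity required in case (a) for arbitrary topology: one needs to classify first-eigenvalue free boundary branched minimal immersions into balls on surfaces of genus $\geq 1$ or with $b\geq 3$ boundary components that attain exactly $\bar\sigma_1=2\pi$, and to show that none exist besides the planar disk. A naive nodal-domain/boundary sign-change argument on the coordinates $\phi_i$ is not sufficient, since critical catenoid-like examples exhibit first-eigenvalue immersions whose boundary components wrap around nontrivial circles; the obstruction is really to show that pushing $\bar\sigma_1$ down to $2\pi$ is incompatible with nontrivial topology. This seems to demand geometric input on free boundary minimal surfaces in balls beyond what is developed in the main body of the paper, and in my view is the genuine analytic depth of Conjecture \ref{eq:conjstrict}.
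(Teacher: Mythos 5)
First, note that the statement you were asked to prove is a \emph{conjecture} in the paper, explicitly labeled as such and explicitly acknowledged as open. The paper does not prove it in general; it proves only the special cases of annuli and M\"obius bands (Theorem \ref{thm_annuli}), using a quite different and much more topology-specific mechanism than the one you sketch. Your outline correctly locates the hard point (rigidity of first-eigenvalue free boundary minimal immersions into balls at the threshold $\bar\sigma_1=2\pi$ for nontrivial topology) and you are honest that it is unproved, so this is not a hidden gap on your part; but two of the steps you state as if they were harmless are actually incorrect or circular, and those deserve to be flagged.

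The circularity is in case (b). You invoke ``the strict version of Theorem \ref{theo:largeineq}'' to rule out bubbling, but that strict inequality is exactly the statement being proved. The machinery of \cite{petrides-3,PetridesVarMethod} runs the other way: if $\sigma_1(\Sigma,[g])>2\pi$ then a maximizer exists (Theorem \ref{thm_existence}); if $\sigma_1(\Sigma,[g])=2\pi$, the bubbling scenario, in which all spectral mass concentrates into a single round disk bubble, is perfectly consistent and produces no contradiction whatsoever. For the same reason the dichotomy you set up is misleading: when $\sigma_1(\Sigma,[g])=2\pi$ there is no theorem guaranteeing that case (a) can be forced, so even the reduction to rigidity is not secured. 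In case (a) you correctly note that the required rigidity goes beyond anything in the paper; let me emphasize that this is not a technical gap but the actual content of the conjecture. For annuli and M\"obius bands the paper circumvents both difficulties by a one-dimensional moduli space argument: one establishes the strict inequality near \emph{both} ends of the moduli space (thin rings via Dittmar's Theorem \ref{thm_comp_flat}, long cylinders via Proposition \ref{prop:construction>2pi} and Proposition \ref{theo:strictineq}), which confines any hypothetical point with $\tilde\sigma_1(r)=2\pi$ strictly between them; then the existence result of Theorem \ref{thm_existence} on the compact sub-interval produces an interior maximum of $r\mapsto\tilde\sigma_1(r)$, hence a free boundary minimal immersion by first eigenfunctions at some $r=t$; and Fraser--Schoen rigidity (Theorem \ref{thm_fs}) forces $t=r_\star$, the conformal type of the critical catenoid (or critical M\"obius band), a contradiction. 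This argument uses nothing like a general concentration-compactness dichotomy on a fixed $(\Sigma,g)$; it is a global statement about the behavior of $\sigma_1(\cdot,[\cdot])$ across the whole one-parameter moduli space, and that is precisely why it does not extend to higher genus or more boundary components, where the moduli space has dimension greater than one and no analogue of the uniqueness of the critical catenoid is available.
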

The elaborate techniques of the author from \cite{PetridesVarMethod,PetridesNew} and their extension in the present paper can be used to prove this conjecture in all the conformal classes of the annulus and the M{\"o}bius band.
\begin{theo} \label{thm_annuli}
Let $\Sigma$ be an annulus or a M{\"o}bius band endowed with a flat metric $g$ then
there is a smooth positive function $\beta : \partial\Sigma \to \R_+^{\star}$ such that
\begin{equation} \label{eq_gap}
\bar{\sigma}_1 \left(\Sigma,g,\beta \right) > 2 \pi.
\end{equation}
\end{theo}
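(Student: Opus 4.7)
\medskip

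\noindent\textbf{Proof plan.} By Theorem~\ref{theo:largeineq} applied to $(\Sigma,g)\sqcup (\mathbb{D},\xi)$ we already have $\sigma_1(\Sigma,[g])\geq\sigma_1(\mathbb{D})=2\pi$, so the task reduces to ruling out equality, since the existence of a single $\beta$ satisfying \eqref{eq_gap} is equivalent to $\sigma_1(\Sigma,[g])>2\pi$. I would argue by contradiction, assuming $\sigma_1(\Sigma,[g])=2\pi$, and analyse a maximizing sequence of smooth positive weights $(\beta_n)$ for the conformal Steklov functional. Up to the compact group of conformal automorphisms of the flat $(\Sigma,g)$ and up to a subsequence, either $\beta_n$ subconverges to a smooth positive limit $\beta^{*}$ or it concentrates at a boundary point.

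If $\beta_n\to\beta^{*}$ smoothly, Theorem~\ref{theo:critcombS} applied to $(g,\beta^{*})$ delivers a free boundary harmonic map $\Phi\colon\Sigma\to\R^n$ with $|\Phi|=1$ on $\partial\Sigma$ realizing the Weinstock value $\sigma_1(\Sigma,g,\beta^{*})L_{\beta^{*}}(\partial\Sigma)=2\pi$. Since $2\pi$ is the Dirichlet energy of the identity free boundary harmonic disk, an argument in the spirit of the Fraser--Schoen classification of critical catenoids and Möbius strips forces $\Phi$ to factor through a branched conformal covering of the identity disk map; this factorization is incompatible with the annular or non-orientable topology of $\Sigma$ and yields the contradiction. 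If instead $(\beta_n)$ concentrates, the quantitative bubble-convergence analysis from \cite{PetridesVarMethod,PetridesNew} identifies the bubble as a disk carrying exactly the Weinstock mass $2\pi$ and provides a no-neck estimate for the residual first-Steklov mass on the complementary surface, which retains the non-trivial topology of $\Sigma$ and is therefore strictly positive. This additional residual mass contradicts the assumed equality $\sigma_1(\Sigma,[g])=2\pi$. The Möbius band case is reduced to the annulus one by lifting the weights and all test functions $\mathbb{Z}/2\mathbb{Z}$-equivariantly to the orientable double cover (a flat annulus) and running the argument equivariantly, since every step above is compatible with the involution.

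The main obstacle I anticipate is the no-neck estimate in the degenerate case, specifically producing a strictly positive lower bound on the residual first-Steklov mass \emph{uniformly} across the full flat moduli space of annuli, covering both the thin-and-long and the short-and-fat extremes where soft compactness arguments on moduli space break down. Extending the bubble analysis of \cite{PetridesVarMethod,PetridesNew} to this uniform setting is the analytical heart of the appendix, and precisely the "extension in the present paper" alluded to in the statement. A secondary difficulty is the rigidity step in the attained case: one needs a quantitative uniqueness statement for free boundary harmonic maps of energy $2\pi$ on a non-simply connected surface, which does not seem to follow from any soft compactness argument and must be obtained from the structure equations (1)--(3) of Theorem~\ref{theo:critcombS} directly.
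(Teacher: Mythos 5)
Your approach is genuinely different from the paper's, and both of the case analyses in your plan hide gaps that in fact constitute the hard part of the statement.

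The paper does not try to rule out equality $\sigma_1(\Sigma,[g])=2\pi$ within a single conformal class by a blow-up analysis. Instead, it runs an intermediate-value argument over the one-parameter moduli space of flat annuli $A_r=\overline{B}_1\setminus B_r$, $r\in(0,1)$. Writing $\tilde\sigma_1(r)=\sigma_1(A_r,[\xi])$, it uses: (i) $\tilde\sigma_1(r)>2\pi$ for $r$ near $0$ (Theorem~\ref{thm_comp_flat}, explicit computation); (ii) $\tilde\sigma_1(r_k)>2\pi$ along some $r_k\to 1$ (Proposition~\ref{theo:strictineq}, the hard technical input coming from Proposition~\ref{prop:construction>2pi}); (iii) the compactness statement in Theorem~\ref{thm_existence}, valid under a \emph{uniform} strict bound $\sigma_1>2\pi+\eta$; and (iv) Fraser--Schoen uniqueness of the critical catenoid. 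If $\tilde\sigma_1(s)=2\pi$ for some $s$, then on one side of $s$ there is an interior maximum of $(r,\beta)\mapsto\bar\sigma_1(A_r,\xi,\beta)$ over moduli and weight \emph{jointly}, achieved at some $t\ne r_\star$; joint criticality yields a free boundary \emph{minimal} immersion via Theorem~\ref{theo:critcombS}, forcing $t=r_\star$ by Theorem~\ref{thm_fs} --- contradiction. The M\"obius band case is identical with the critical M\"obius band and the M\"obius analogue of Theorem~\ref{thm_comp_flat}.

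The two gaps in your version are concrete. First, in your non-degenerate case you only vary $\beta$ inside a fixed conformal class, so Theorem~\ref{theo:critcombS} yields a free boundary \emph{harmonic} map (conditions (1),(2),(4)), not a minimal immersion (condition (3) comes from varying the metric, i.e.\ the conformal class). Fraser--Schoen rigidity requires minimality, so it does not apply, and the assertion that a harmonic annulus of energy $2\pi$ ``must factor through a branched cover of the disk'' is not justified by anything in the paper and is not obviously true. Second, in your degenerate case you assert the residual first-eigenvalue mass on the topology-carrying complement ``is therefore strictly positive,'' but this is precisely the content of Conjecture~\ref{eq:conjstrict} and is not implied by the general no-neck/bubble analysis of \cite{PetridesVarMethod,PetridesNew}; in those references the mass \emph{can} degenerate entirely to the disk bubble when $\sigma_1(\Sigma,[g])=2\pi$ (this is exactly why Theorem~\ref{theo:largeineq} is only a non-strict inequality). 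The paper avoids having to control this degenerate scenario within a conformal class by instead proving strictness directly at the moduli boundary via Proposition~\ref{prop:construction>2pi}, and then invoking compactness only in the region where $\tilde\sigma_1$ is already uniformly $>2\pi$. Your proposed $\mathbb{Z}/2$-equivariant lift to the orientable double cover for the M\"obius band also does not track Steklov eigenvalues in the way you would need; the paper simply reruns the one-dimensional moduli argument with the critical M\"obius band playing the role of the catenoid.
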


As a consequence of the work in \cite{petrides-3} and Theorem \ref{thm_annuli} we also obtain
\begin{theo} \label{thm_harmonic}
Let $\Sigma$ be an annulus or a M{\"o}bius band endowed with a flat metric $g$ then there 
there is a smooth positive function $\beta : \partial\Sigma \to \R_+^{\star}$ such that
\begin{equation} \label{conjecture:strict}
\sigma_1 \left(\Sigma,g,\beta \right) = \sigma_1(\Sigma,[g]).
\end{equation}
In particular, there is a free boundary harmonic map $\Phi \colon (\Sigma,g) \to \mathbb{B}^N$ by first eigenfunctions.
Here, $N \leq 3$ if $\Sigma$ is an annulus and $N \leq 4$ if $\Sigma$ is a M{\"o}bius band.
\end{theo}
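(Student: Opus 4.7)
The plan is to deduce Theorem~\ref{thm_harmonic} from the strict gap of Theorem~\ref{thm_annuli} by invoking the attainment principle recalled just after Theorem~\ref{theo:largeineq}, and then to read off the free boundary harmonic map from Theorem~\ref{theo:critcombS}. I would apply Theorem~\ref{theo:largeineq} with the decreasing function $f(x_1)=1/x_1$, noting that $I_c^S(\Sigma,[g],f)=1/\sigma_1(\Sigma,[g])$, so the problem reduces to verifying the strict inequality $I_c^S(\Sigma,[g],f)<I_c^S(\tilde\Sigma,[\tilde g],f)$ against every candidate $(\tilde\Sigma,\tilde g)$ listed there.

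For the disconnected candidates, the constants on each component give a kernel of the Dirichlet energy of dimension at least $2$, forcing $\sigma_1(\tilde\Sigma,\tilde g,\tilde\beta)=0$ for every $\tilde\beta$ and hence $I_c^S(\tilde\Sigma,[\tilde g],f)=+\infty$; the strict inequality is then automatic. The only non-trivial candidate is $\tilde\Sigma=\mathbb{D}$, where $I_c^S(\mathbb{D},[\xi],f)=1/(2\pi)$, and there the required inequality $\sigma_1(\Sigma,[g])>2\pi$ is precisely what Theorem~\ref{thm_annuli} delivers. The attainment principle of \cite{PetridesCS,PetridesVarMethod} then yields a smooth positive $\beta\colon\partial\Sigma\to\R_+^{\star}$ realizing $\sigma_1(\Sigma,[g])$, which is \eqref{conjecture:strict}.

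For the harmonic map, I would feed the maximizing pair $(g,\beta)$ into Theorem~\ref{theo:critcombS} with $f(x_1)=1/x_1$. Since $m=1$ there, the diagonal matrix $\sigma$ collapses to $\sigma_1\cdot I_N$, and conditions~(1)--(2) read $\Delta_g\Phi=0$ in $\Sigma$, $\partial_\nu\Phi=\beta\sigma_1\Phi$ on $\partial\Sigma$, and $\sigma_1|\Phi|^2=1$ on $\partial\Sigma$. Rescaling $\tilde\Phi=\sqrt{\sigma_1}\,\Phi$ then sends $\partial\Sigma$ into the unit sphere $\partial\mathbb{B}^N$ and makes $\partial_\nu\tilde\Phi$ a positive multiple of $\tilde\Phi$ along $\partial\Sigma$, which is exactly the free boundary condition; by (1) the coordinates of $\tilde\Phi$ are first Steklov eigenfunctions of $(\Sigma,g,\beta)$. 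The integer $N$ is bounded by the multiplicity of $\sigma_1(\Sigma,g,\beta)$, and the known multiplicity bounds for the first Steklov eigenvalue on these topologies give $N\leq 3$ on the annulus and $N\leq 4$ on the M\"obius band.

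The only genuinely hard step is Theorem~\ref{thm_annuli} itself, i.e.\ producing a weight $\beta$ with $\bar\sigma_1>2\pi$ on a flat annulus or M\"obius band; once that gap is in hand, Theorem~\ref{thm_harmonic} is essentially an assembly of already available machinery. The one bookkeeping detail to watch is that $f(x_1)=1/x_1$ must be allowed to take the value $+\infty$ at $x_1=0$, which is what lets the attainment principle of \cite{PetridesCS,PetridesVarMethod} handle the disconnected candidates where $\bar\sigma_1$ vanishes; this is exactly the generality in which that framework is formulated.
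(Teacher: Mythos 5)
Your proof is correct and follows essentially the same logic as the paper's: the strict gap from Theorem~\ref{thm_annuli} drives an attainment principle, and Theorem~\ref{theo:critcombS} then produces the free boundary harmonic map. The only difference worth noting is the route to attainment: the paper invokes Theorem~\ref{thm_existence} (i.e.\ \cite[Theorem~2]{petrides-3}), which is precisely the specialization ``$\sigma_1(\Sigma,[g])>2\pi$ implies the conformal supremum is attained by a smooth positive weight,'' whereas you pass through the more general framework of Theorem~\ref{theo:largeineq} with $f(x_1)=1/x_1$ and the strict-inequality attainment criterion of \cite{PetridesCS,PetridesVarMethod}, checking the disconnected candidates by the trivial observation that $\sigma_1=0$ there. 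Both routes reduce the key input to the same inequality $\sigma_1(\Sigma,[g])>2\pi$; the general-$f$ machinery is more than what is needed here, but nothing is wrong with it, and your unwinding of Theorem~\ref{theo:critcombS} (rescaling $\Phi$ by $\sqrt{\sigma_1}$ to land in $\mathbb{S}^{N-1}$ and reading off the Neumann/normal condition as the free boundary condition) and your appeal to the multiplicity bounds for $\sigma_1$ on annuli and M\"obius bands to bound $N$ match what the paper intends.
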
 
Notice that in \cite{petrides}, we prove the analagous strict inequality in the context of Laplace eigenvalues on connected closed surfaces, and as a consequence the existence of maximizers for the first Laplace eigenvaule (renormalized by the area) in any conformal class. This fundamental result is also used in \cite{KS}.
The technical extension of \cite{PetridesVarMethod,PetridesNew} required to prove Theorem \ref{thm_annuli} is the following.
\begin{prop} \label{prop:construction>2pi} Let $\Sigma_{{l},\eps}^{\pm}$ be the compact surfaces with boundary defined by \eqref{def:sigmalepspm}. Then, for any $l>0$, we have that
\begin{equation} \label{eq:construction>2pi} \bar{\sigma}_k(\Sigma_{{l},\eps}^{\pm},[g_\eps] ) >2\pi \end{equation}
holds for $\eps$ small enough.
\end{prop}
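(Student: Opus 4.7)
The plan is to exhibit, for each $l > 0$ and each $\eps$ small, an explicit smooth positive weight $\beta_\eps$ on $\partial \Sigma_{l,\eps}^\pm$ with $\bar{\sigma}_1(\Sigma_{l,\eps}^\pm, g_\eps, \beta_\eps) > 2\pi$; then \eqref{eq:construction>2pi} follows from \eqref{def:sigmakconf} and the monotonicity $\bar{\sigma}_1 \leq \bar{\sigma}_k$. By the conformal covariance \eqref{conformalcovariance}, this is equivalent to building a Steklov-equivalent flat metric $\hat g_\eps = \hat\beta_\eps^2 g_\eps$ on $\Sigma_{l,\eps}^\pm$ and lower-bounding $\sigma_1(\Sigma_{l,\eps}^\pm, \hat g_\eps, 1) \cdot L(\partial\Sigma_{l,\eps}^\pm, \hat g_\eps)$. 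I read the family $\Sigma_{l,\eps}^\pm$ defined by \eqref{def:sigmalepspm} as a Euclidean disk joined to a thin annular ($+$) or M{\"o}bius ($-$) strip of transverse parameter $\eps$ and length $l$, so that as $\eps \to 0$ the conformal class $[g_\eps]$ degenerates towards a disk with a pinched cylindrical/M{\"o}bius appendage.

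I would choose $\hat g_\eps$ to equip the bulk with a round disk metric and the strip with a rescaled flat metric whose circumferential length is of order $\eps$ but whose free-boundary length is comparable to the bulk boundary length, interpolating smoothly in the collar. For this choice, the Steklov problem on $\Sigma_{l,\eps}^\pm$ becomes a singular perturbation of the Steklov problem on the round disk, whose first eigenvalue $1/R$ achieves $\bar{\sigma}_1 = 2\pi$ with two-dimensional eigenspace spanned by the coordinate functions $x,y$. I would then leverage the rigidity statement recalled after \eqref{def:sigmak}: since $\bar{\sigma}_1 = 2\pi$ is attained only by the round Euclidean disk, any nontrivial topological deformation must strictly increase $\bar{\sigma}_1$ once the weight is chosen optimally. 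Quantitatively, the disk coordinates $x,y$ must be extended harmonically across the collar into the strip; by the extremal length of the attached strip (which depends on $l$), this extension costs Dirichlet energy bounded below by some $c(l)\,\eta(\eps) > 0$, while the judicious choice of $\hat g_\eps$ keeps the extra boundary $L^2$ mass strictly smaller. Combining these estimates with the min-max characterization and standard perturbation theory for the doubly-degenerate disk eigenvalue should yield $\bar{\sigma}_1(\Sigma_{l,\eps}^\pm, g_\eps, \beta_\eps) \geq 2\pi + c(l)\eta(\eps)$.

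The main obstacle is the quantitative handling of the degeneration $\eps \to 0$, which lives at a boundary of moduli space where first eigenfunctions tend to concentrate on the disk bulk, away from the strip. One must control the small amplitude of these eigenfunctions on the strip's free boundary and match it with the small amount of boundary length the strip carries, ensuring that the strict gain survives the limit. This is precisely the regime where the techniques of \cite{PetridesVarMethod, PetridesNew} apply: their concentration/bubbling framework, together with the explicit analysis of Steklov problems on flat cylindrical and M{\"o}bius pieces, should provide the tools to diagonalize the perturbation on the degenerate eigenspace and to verify that both eigenvalue branches are pushed strictly above $1/R$. The bulk of the proof will be the adaptation of those methods to the specific geometry $\Sigma_{l,\eps}^\pm$ and the uniform tracking of the constants $c(l)$ and $\eta(\eps)$ through the degeneration.
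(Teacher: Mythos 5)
Your proposal takes a genuinely different route from the paper's, and unfortunately it has a gap that is not a technical detail but precisely the heart of the matter.

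The paper proves the proposition \emph{by contradiction}. It assumes $\sigma_1(\Sigma_{l,\eps}^\pm,[g_\eps])=2\pi$ along a subsequence, applies Ekeland's variational principle to the sup defining the conformal eigenvalue to obtain \emph{near}-maximizing weights $\beta_\eps$ together with an approximate free-boundary harmonic map $\Phi_\eps$ (with quantitative error $O(\delta_{\eps,l})$), and then analyzes the structure of $\Phi_\eps$: its projection $\widetilde\Phi_\eps$ to $\mathbb{S}^2$, a Fourier decomposition of the trace on $\mathbb{S}^1$ into the degree-one piece $F_\eps$ plus a small remainder $R_\eps$, and finally a neck estimate on the thin rectangle showing $\frac{1}{\eps}\int_{J_{l,\eps}^+}\widetilde\Phi_\eps \approx \frac{1}{\eps}\int_{J_{l,\eps}^-}\widetilde\Phi_\eps$. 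Passing to the limit forces $F(1)=F(-1)$, contradicting the fact that the limit $F$ must be a rotation of $e^{i\theta}\mapsto(\cos\theta,\sin\theta)$. The two slots where the handle is attached are thus \emph{shorted out} by the near-harmonicity, and this is incompatible with the degree-one behavior that $\bar\sigma_1=2\pi$ imposes on the bulk.

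Your proposal instead tries to argue directly: choose an explicit weight $\beta_\eps$, and lower-bound $\bar\sigma_1(\Sigma_{l,\eps}^\pm,g_\eps,\beta_\eps)$ strictly above $2\pi$ via perturbation theory around the doubly-degenerate disk eigenvalue. Two problems. First, the key sentence ``since $\bar\sigma_1=2\pi$ is attained only by the round disk, any nontrivial topological deformation must strictly increase $\bar\sigma_1$ once the weight is chosen optimally'' is circular: that the conformal supremum $\sigma_1(\Sigma,[g])$ is \emph{strictly} larger than $2\pi$ on non-disk topology is exactly the content of Conjecture \ref{eq:conjstrict}, and is unknown in general; the borderline case $\sigma_1(\Sigma,[g])=2\pi$ without attainment (bubbling to a disk) is precisely what the proposition must rule out. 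Weinstock's rigidity only tells you the disk among disks, not that nearby annuli beat $2\pi$. Second, the ``standard perturbation theory'' does not apply here: as $\eps\to 0$ the conformal class degenerates, so this is a \emph{singular} perturbation with a collapsing handle, and the doubly-degenerate disk eigenvalue can a priori split with one branch going \emph{down}. You correctly identify that one must show ``both eigenvalue branches are pushed strictly above $1/R$'', but you give no mechanism for this; in the paper's language, that mechanism \emph{is} the neck estimate forcing $F(1)=F(-1)$. Without it, the direct approach stalls at the same point the contradiction approach addresses with the Ekeland/near-harmonic-map machinery.

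So while the reduction to $k=1$ and to the degenerate limit $\eps\to 0$ is correct and matches the paper's setup, the decisive step — producing a quantitative strict gain that survives the singular degeneration — is where your sketch merely appeals to the conclusion rather than proving it. To repair the direct approach you would still need something equivalent to Steps 1--3 of the paper's proof: a precise control of the boundary trace on the neck showing the degree-one Fourier mode is incompatible with the handle, which is not ``standard perturbation theory'' but the substance of the argument.
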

Roughly speaking, Proposition \ref{prop:construction>2pi} completes the proof of the conjecture \ref{eq:conjstrict} close to the boundary of the moduli space of Annuli and M\"obius bands. More generally, we strongly believe that the techniques of \cite{PetridesVarMethod,PetridesNew} could be used to prove that conjecture \ref{eq:conjstrict} holds outside a compact subset of the Teichmuller space of any compact surface with boundary. That result could be a first step to complete the proof of the conjecture. By the way, Proposition \eqref{prop:construction>2pi} leads to the proof of Theorem \ref{thm_annuli}. The method we use is very specific to annuli and M\"obius bands since their moduli spaces are homeomorphic to a line and critical points of the first Steklov eigenvalues on these surfaces are unique by \cite{fs} (see Theorem \ref{thm_fs} below).

The paper is organized as follows:
in Section \ref{sec1}, we compute the set of eigenvalues of the new critical ellipses and give consequences on isoperimetric inequalities involving Steklov eigenvalues. In Section \ref{sec2}, we prove Theorem \ref{theo:largeineq}. In Section \ref{sec3}, we prove Proposition \ref{prop:construction>2pi} that is crucial to prove Theorem \ref{thm_annuli}. Appendix \ref{sec4} is devoted to the proof of Theorem \ref{thm_annuli}: it is extracted from the preprint \cite{MP} and is written in collaboration with Henrik Matthiesen.

\section{Critical ellipses} \label{sec1}

For $q\geq 1$, we denote by $co(E_q) = \{ x^2 +q y^2 \leq 1 \}$ the convex hull of the ellipse
$$E_q = \{x^2 + qy^2 = 1\}$$
endowed with the Riemannian metric $g_q = \beta_q^2 (dx^2 +dy^2)$ such that $$\beta_q = (x^2+q^2y^2)^{-\frac{1}{2}}$$ 
on $E_q$. Notice that the parametrization $(\cos\theta,\frac{1}{\sqrt{q}}\sin\theta)$ of the ellipse gives the length measure associated to the Euclidean metric on the boundary
\begin{equation}\label{eq:lengthmeasure}dL_{E_q} = \sqrt{\sin^2\theta+ \frac{1}{q}\cos^2\theta}d\theta = \frac{1}{\sqrt{q}} \beta_q^{-1}d\theta.\end{equation}
Then, the total length for $g_q$ is 
\begin{equation} \label{eq:totallength} L_{E_q}(\beta_qdL_{g_q}) = \int_{E_q} \beta_q dL_{E_q} = \int_{0}^{2\pi} \frac{d\theta}{\sqrt{q}} = \frac{2\pi}{\sqrt{q}}. \end{equation}

In the following result, we prove that $(co(E_q),g_q)$ is critical for numerous combinations of Steklov eignvalues. Then, it is a good candidate to extremize sharp isoperimetric inequalities involving Steklov eigenvalues. If $\sigma$ is a Steklov eigenvalue of $(\Sigma,g)$, the index of $\sigma$ is the smallest integer $k$ such that $\sigma_k(\Sigma,g) = \sigma$.

\begin{theo} \label{theo:classellipse} Let $q\geq 1$. The ellipse $(co(E_q),g_q)$ has the following set of eigenfunctions $(Re(P_n^q),Im(P_n^q))$ and associated eigenvalues $(\sigma_n^q,\tau_n^q)$:
$$ \sigma_n^q  = n \sqrt{q} \frac{(\sqrt{q}+1)^n-(\sqrt{q}-1)^n}{(\sqrt{q}+1)^n+(\sqrt{q}-1)^n} \text{ and } \tau_n^q = n \sqrt{q} \frac{(\sqrt{q}+1)^n+(\sqrt{q}-1)^n}{(\sqrt{q}+1)^n-(\sqrt{q}-1)^n}$$
and
$$ P_n^q(z) = \frac{1}{2^{n-1}} \sum_{k=0}^{\left[\frac{n}{2}\right]} {n \choose 2k} z^{n-2k} \left(z^2 - \left(1-\frac{1}{q}\right) \right)^k.$$
Moreover, for $q>1$, $g_q$ is a critical metric for all functionals $g\mapsto f(\bar{\sigma}_{k_1}(g),\bar{\sigma}_{k_2}(g))$ such that
$$ \frac{ \partial_1 f(\bar{\sigma}_{k_1}(g_q),\bar{\sigma}_{k_2}(g_q))}{\partial_2 f(\bar{\sigma}_{k_1}(g_q),\bar{\sigma}_{k_2}(g_q))} = \frac{\int_{E_q} Re(P_{n}^q)^2 dL_{g_q}}{\int_{E_q} Im(P_{n}^q)^2 dL_{g_q}}, $$
where $(k_1,k_2)$ is the couple of indices of the couple of distinct eigenvalues $(\sigma_n^q,\tau_n^q)$.
\end{theo}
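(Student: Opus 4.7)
The plan is to exploit the conformal change of variables $z = F(w) := aw + b/w$, where $a = (\sqrt q + 1)/(2\sqrt q)$ and $b = (\sqrt q - 1)/(2\sqrt q)$, so that on the unit circle $F(e^{i\theta}) = ae^{i\theta} + be^{-i\theta}$ parametrizes $E_q$. The first main step is the algebraic identity
$$P_n^q \circ F(w) = a^n w^n + b^n w^{-n}.$$
To prove it I would check $z^2 - (1 - 1/q) = (aw - b/w)^2$ whenever $z = F(w)$, recognize the sum in the statement as $P_n^q(z) = \frac{1}{2^n}\bigl((z + \sqrt{z^2 - (1 - 1/q)})^n + (z - \sqrt{z^2 - (1 - 1/q)})^n\bigr)$, and use $z \pm (aw - b/w) \in \{2aw, 2b/w\}$. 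Restricting to $|w| = 1$ reads off the boundary values $Re(P_n^q) = (a^n + b^n)\cos(n\theta)$ and $Im(P_n^q) = (a^n - b^n)\sin(n\theta)$ on $E_q$; since $dL_{g_q} = d\theta/\sqrt q$ by \eqref{eq:lengthmeasure}, these traces form a complete orthogonal basis of $L^2(\partial co(E_q), dL_{g_q})$.

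Next I would compute Dirichlet energies through the same change of variables. Differentiating the identity yields the clean Laurent polynomial
$$(P_n^q)'(F(w))\,F'(w) = n\bigl(a^n w^{n-1} - b^n w^{-n-1}\bigr).$$
The map $F$ sends the annulus $\{\sqrt{b/a} < |w| < 1\}$ two-to-one onto $co(E_q)$ minus the focal segment $[-2\sqrt{ab}, 2\sqrt{ab}]$ (of zero area); pulling $|(P_n^q)'|^2 dA$ back to this annulus, writing $w = re^{i\theta}$, and discarding the $\cos(2n\theta)$-term that integrates to zero, a short radial integration gives
$$\int_{co(E_q)} |\nabla Re(P_n^q)|^2\,dA = \int_{co(E_q)} |\nabla Im(P_n^q)|^2\,dA = \pi n (a^{2n} - b^{2n}),$$
while the analogous computation for the cross product $\overline{(P_n^q)'}(P_m^q)'$ produces Dirichlet orthogonality for distinct $n$. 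Combined with the boundary orthogonality, the family therefore diagonalizes the weighted Dirichlet-to-Neumann operator, its members are Steklov eigenfunctions, and their eigenvalues are the Rayleigh quotients $\sigma_n^q = \sqrt q\,n(a^n - b^n)/(a^n + b^n)$ and $\tau_n^q = \sqrt q\,n(a^n + b^n)/(a^n - b^n)$. Substituting the values of $a,b$ yields the stated closed forms.

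For the critical-metric claim (for $q > 1$), I would apply the converse direction of Theorem \ref{theo:critcombS} to the map $\Phi = \alpha\bigl(Re(P_n^q), Im(P_n^q)\bigr)$ with $\alpha^2 = 1/\bigl(\sqrt q\,n(a^{2n} - b^{2n})\bigr)$. The pleasant algebraic coincidence $\sigma_n^q(a^n + b^n)^2 = \tau_n^q(a^n - b^n)^2 = \sqrt q\,n(a^{2n} - b^{2n})$ gives $|\Phi|_\sigma^2 = 1$ on $\partial co(E_q)$ (condition (2)); and because the two coordinates carry the same scalar $\alpha$, the Cauchy--Riemann equations for the holomorphic $P_n^q$ immediately yield $\partial_x \Phi \cdot \partial_y \Phi = (\alpha^2 - \alpha^2)u_x u_y = 0$ and $|\partial_x \Phi|^2 = |\partial_y \Phi|^2$, which is the conformality (3). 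Finally, using $\int \phi_{k_1}^2\,dL_{g_q} = \pi/(\sqrt q\,\sigma_n^q)$, $\int \phi_{k_2}^2\,dL_{g_q} = \pi/(\sqrt q\,\tau_n^q)$ and $\int dL_{g_q} = 2\pi/\sqrt q$, condition (4) reduces to
$$\frac{\partial_1 f}{\partial_2 f} = \frac{\tau_n^q}{\sigma_n^q} = \frac{(a^n + b^n)^2}{(a^n - b^n)^2} = \frac{\int_{E_q} Re(P_n^q)^2\,dL_{g_q}}{\int_{E_q} Im(P_n^q)^2\,dL_{g_q}},$$
which is the stated ratio. The key ingredient is the conformal change of variables $F$ and the resulting identification $P_n^q \circ F = a^n w^n + b^n w^{-n}$; once in hand, the spectral computations reduce to radial integrations, and the fortunate fact that the $L^2$-normalization in (2) automatically equalizes the two coordinates — precisely what the conformality (3) requires — is what makes the critical-metric claim emerge so cleanly.
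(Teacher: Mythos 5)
Your argument is correct and reaches the same formulas, but the route differs from the paper's in two substantive ways. \emph{(i) The boundary identity.} The paper obtains $P_n^q(\cos t + i\sin t/\sqrt q) = A_n^q\cos nt + iB_n^q\sin nt$ as a direct citation of Nikiforova's theorem and then verifies the Steklov equation $\partial_\nu P_n^q = \beta_q(\sigma_n^q \mathrm{Re}\,P_n^q + i\tau_n^q\mathrm{Im}\,P_n^q)$ by differentiating the parametrized identity in $t$; this is short and avoids any area integral. You instead derive the identity from scratch via the Joukowski-type map $F(w)=aw+b/w$ (with $a^n,b^n$ matching the paper's $A_n^q,B_n^q$ up to the $2^n$ factor) and then extract the eigenvalues from Rayleigh quotients, which requires the extra Dirichlet-energy computation $\int_{co(E_q)}|\nabla\mathrm{Re}\,P_n^q|^2\,dA = \pi n(a^{2n}-b^{2n})$ plus Dirichlet orthogonality across $n$. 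Your version is more self-contained (no external citation for the key identity) and the conformal-map picture is illuminating, at the price of more computation; the paper's is shorter once the identity is granted. \emph{(ii) The critical-metric step.} Both arguments hinge on the same algebraic coincidence $\sigma_n^q\,\mathrm{Re}(P_n^q)^2 + \tau_n^q\,\mathrm{Im}(P_n^q)^2 = \sqrt q\,n(a^{2n}-b^{2n})$ on $E_q$. The paper states that this makes $(c_n^q)^{-1/2}P_n^q$ a free boundary conformal harmonic map into the ellipse $\{\sigma_n^q x^2 + \tau_n^q y^2\le 1\}$ and then invokes the precise converse result, namely [PetridesTewodrose, Theorem~4.5]. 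You instead verify conditions (2), (3), (4) of Theorem~\ref{theo:critcombS} directly (your CR-based check of conformality and the observation that the ratio condition together with (2) implies (4) are both correct), but you attribute the conclusion to ``the converse direction of Theorem~\ref{theo:critcombS}'', which is not actually stated as such in the paper; the right citation is [PetridesTewodrose, Theorem~4.5], as the paper does. One minor imprecision: $F$ restricted to the annulus $\{\sqrt{b/a}<|w|<1\}$ is one-to-one (not two-to-one) onto the interior of $co(E_q)$ minus the focal segment; the two-to-one statement holds for the larger annulus $\{b/a<|w|<1\}$. This does not affect your energy computation.
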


\begin{proof}
We assume that $P_n$ is a unit polynomial of degree $n$ such that $Re(P_n)$ and $Im(P_n)$ are Steklov eigenfunctions with eigenvalues $\sigma_n$ and $\tau_n$ in $E_q$. As real and imaginary parts of complex polynomials, they are harmonic. We compute the equation on the boundary $E_q$: we have for $z = x+iy \in E_q$,
$$ \partial_\nu P_n(z) = \left\langle \nabla P_n(z) , \nu(z) \right\rangle = \left\langle \left( P_n'(z) , iP_n(z) \right) , \frac{(x, qy) }{(x^2+q^2 y^2)^{\frac{1}{2}}} \right\rangle = \frac{ (x+iqy)P_n'(z) }{(x^2+q^2 y^2)^{\frac{1}{2}}}$$
so that
\begin{equation} \label{eq:asatisfairepnq} (x+iqy)P_n'(z) = \sigma_n Re(P_n) + i \tau_n Im(P_n).\end{equation}

We prove that $P_n^q$ satisfies \eqref{eq:asatisfairepnq}. Let $(x,y) = (\cos t , i \frac{\sin t}{\sqrt q}) \in E_q$. We set
$$ A_n^q = \frac{ (1+\frac{1}{\sqrt{q}})^n + (1-\frac{1}{\sqrt{q}})^n }{2^n} \text{ and } B_n^q = \frac{ (1+\frac{1}{\sqrt{q}})^n - (1-\frac{1}{\sqrt{q}})^n }{2^n} .$$
By \cite[Theorem 1 (with $a=1$, $b=\frac{1}{\sqrt{q}}$)]{Nikiforova} we deduce
\begin{equation} \label{eq:relpnq} P_n^q\left( \cos t + i \frac{\sin t}{\sqrt{q}} \right) = A_n^q \cos nt + i B_n^q \sin nt. \end{equation}
We obtain the derivative with respect to $t$
$$ \left( -\sin t + i \frac{\cos t}{\sqrt q} \right) (P_n^q)'\left( \cos t + i \frac{\sin t}{\sqrt{q}} \right) = n \left( - A_n^q \sin nt + i B_n^q \cos nt\right) $$
that gives if we set $z=x+iy$:
$$ \frac{i}{\sqrt q}\left( \cos t +iq \frac{\sin t}{\sqrt q} \right)(P_n^q)'(z) = n \left( - A_n^q \sin nt + i B_n^q \cos nt  \right) $$
so that
$$ \left( x+iqy \right)(P_n^q)'(z) = \sqrt{q} n \left( B_n^q \cos nt  + i A_n^q \sin nt \right) $$
and we deduce from this equality and \eqref{eq:relpnq} that:
$$ \left( x+iqy \right)(P_n^q)'(z) = \sqrt{q} n \frac{ B_n^q}{A_n^q} Re\left(P_n^q(z)\right)  + i \sqrt{q} n \frac{ A_n^q}{B_n^q} Im\left(P_n^q(z)\right). $$

Moreover, the constant functions and $(Re(P_n^q),Im(P_n^q))_{}$ is the whole sequence of Steklov eigenfunctions of $(co(E_q), g_q)$. Indeed, up to renormalization, it is a Hilbert basis of $L^2(E_q,g_q)$ since with \eqref{eq:relpnq} on $E_q$, they correspond to harmonic polynomials.

The last part of the proposition comes from
$$ \sigma_n^q Re(P_n^q)^2 + \tau_n^q Im(P_n^q)^2 = c_{n,q} := \sqrt{q}n A_{n,q}B_{n,q} \text{ sur } E_q. $$ 
It implies that $(c_n^q)^{-\frac{1}{2}} P_{n,q}$ is a free boundary conformal harmonic map into
$$\{ \sigma_n^q x^2 + \tau_n^q y^2 \leq 1 \}. $$
Therefore, it provides critical metrics of spectral functionals: we apply \cite[Theorem 4.5]{PetridesTewodrose} to complete the proof.
\end{proof}

For $q = 1$, $P_n^q = z^n$ correspond to Steklov eigenfunctions on the Euclidean disk : the homogeneous harmonic polynomials. To our knowledge, these Riemannian surfaces are new examples of compact simply connected surfaces with boundary 
that are critical for spectral functionals. We emphasize that the order of eigenvalues on the disks
$$ 1 = \sigma_1^1 = \tau_1^1 < 2 = \sigma_2^1 = \tau_2^1 < \cdots < n = \sigma_2^n = \tau_2^n < \cdots $$
is not the same anymore as $q$ grows. Indeed, we have
$$\forall q \in [1,+\infty[, (\sigma_n^q)_{n\geq 1} \text{ and } (\tau_n^q)_{n\geq 1} \text{ are increasing}$$
$$ \forall n \in \mathbb{N}^*, (\sigma_n^q)_{q\geq 1} \text{ is bounded and } \tau_n^q \sim q \text{ as } q \to +\infty.   $$

\medskip

As examples, let's focus on spectral functionals on the disk that combine first and second Steklov eigenvalues. Let $f : \R^2_+ \to \R \cup \{\infty\}$ be such that $\partial_i f \leq 0$ for $i=1,2$ and
$$ E_f^S(\mathbb{D},g) = F(\bar{\sigma}_1(\mathbb{D},g), \bar{\sigma}_2(\mathbb{D},g)).$$
In the following proposition, we prove that the only critical points of $E_f^S(\mathbb{D},\cdot)$ that are metrics associated to branched minimal immersions into ellipses have to be Steklov isometric to $(E_q,g_q)$ for $q\geq 1$. We denote $\mathcal{E}_\sigma = \{ \sigma_1 x_1^2 + \sigma_2 x_2^2 = 1 \}$.

\begin{prop}[{\cite[Proposition 1.2]{Petridesnonplanardisk}}] \label{prop:plongéplan}
Let $\Phi : \mathbb{D} \to co\left(\mathcal{E}_{\sigma}\right) \subset \R^2$ be a conformal free boundary harmonic map into an ellipse. We assume that the coordinates of $\Phi$ are first or second Steklov eigenfunctions with respect to the metric 
$$g = e^{2v} \Phi^\star \xi \text{ such that } e^v = (\sigma_1^2(\phi_1)^2 + \sigma_2^2 (\phi_2)^2  )^{-\frac{1}{2}}  \text{ on }  \mathbb{S}^1 $$
where $\xi$ is the Euclidean metric on $co\left(\mathcal{E}_{\sigma}\right)$. Then $\Phi$ is a biholomorphism. Then, there is $q>1$ such that $(\mathbb{D},g)$ is Steklov-isometric to $(E_q,g_q)$ (up to dilatation).
\end{prop}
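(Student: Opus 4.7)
The plan is to first show that $\Phi$ is holomorphic (or antiholomorphic), hence a proper branched covering of $co(\mathcal{E}_\sigma)$ of some degree $d\geq 1$, then to use the first-eigenfunction hypothesis together with a nodal-domain count to force $d=1$, and finally to identify $g$ with $g_q$ via a linear change of variables.

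For the holomorphicity step, since $\Phi$ is weakly conformal and harmonic as a map into $\R^2\simeq\mathbb{C}$, writing $\Phi=\phi_1+i\phi_2$ the conformality condition $|\partial_z\Phi|=|\partial_{\bar z}\Phi|$ together with harmonicity $\partial_z\partial_{\bar z}\Phi=0$ forces either $\partial_{\bar z}\Phi\equiv 0$ or $\partial_z\Phi\equiv 0$ on the connected disk; up to complex conjugation, $\Phi$ is holomorphic. Boundary regularity for free boundary harmonic maps gives a smooth extension of $\Phi$ to $\overline{\mathbb{D}}$, and the free boundary condition $\Phi(\partial\mathbb{D})\subset\mathcal{E}_\sigma$ makes $\Phi$ a proper holomorphic map into $co(\mathcal{E}_\sigma)$. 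Hence $\Phi$ is a finite branched covering of some degree $d\geq 1$, having $d-1$ interior critical points, and its restriction to the boundary wraps $d$ times around $\mathcal{E}_\sigma$.

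I would next argue $d=1$. In the non-degenerate case $\sigma_1<\sigma_2$ (the target is a genuine ellipse), $\phi_1$ must be a first Steklov eigenfunction of $(\mathbb{D},g)$. Since $\Phi|_{\partial\mathbb{D}}$ is a $d$-fold covering of $\mathcal{E}_\sigma$ and the vertical axis meets $\mathcal{E}_\sigma$ transversally at exactly two points, $\phi_1|_{\partial\mathbb{D}}$ has exactly $2d$ simple zeros (simplicity follows from the fact that the boundary map of a proper holomorphic map onto a smooth convex region is an immersion). An Euler-characteristic count on the nodal graph of $\phi_1$ inside $\overline{\mathbb{D}}$---with $2d$ boundary vertices of valence $1$, interior vertices at the critical points of $\Phi$ lying on $\{\phi_1=0\}$ with valence $2(k+1)$ each, and both the arcs of $\partial\mathbb{D}$ between consecutive boundary zeros and the interior nodal arcs as edges---yields exactly $1+d+K$ nodal domains, where $K\geq 0$ is the sum of multiplicities of those critical points lying on the nodal set. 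Courant's nodal-domain theorem applied to $\phi_1$ bounds this by $2$, so $d=1$ and $K=0$; hence $\Phi$ is an unramified degree-one biholomorphism $\mathbb{D}\to co(\mathcal{E}_\sigma)$.

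With $\Phi$ a biholomorphism, the final identification is explicit: the linear change $(x,y)=(\sqrt{\sigma_1}\phi_1,\sqrt{\sigma_1}\phi_2)$ sends $\mathcal{E}_\sigma=\{\sigma_1 X^2+\sigma_2 Y^2=1\}$ onto $E_q$ with $q=\sigma_2/\sigma_1>1$, and transforms $e^v=(\sigma_1^2\phi_1^2+\sigma_2^2\phi_2^2)^{-1/2}$ into a constant multiple of $\beta_q=(x^2+q^2y^2)^{-1/2}$. By conformal covariance \eqref{conformalcovariance}, $\Phi$ transports $g$ to a constant multiple of $g_q$, so $(\mathbb{D},g)$ is Steklov-isometric to $(co(E_q),g_q)$ up to a global dilation. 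The main obstacle is the nodal-graph bookkeeping in the second step: one must track the contribution $K$ when critical values of $\Phi$ happen to lie on $\{x=0\}$, and the degenerate circular case $\sigma_1=\sigma_2$, which lies outside the conclusion $q>1$, must be excluded from the hypothesis of a genuine ellipse.
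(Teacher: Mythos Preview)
Your proof is correct and follows the same three-step strategy as the paper: establish holomorphicity, use the first-eigenfunction nodal structure to force the boundary degree to be $1$, and conclude that $\Phi$ is a biholomorphism. The paper's Step~1 makes explicit what you invoke as a general fact: it uses the maximum principle and Hopf lemma (citing \cite[Claim~1.1]{Petridesnonplanardisk}) to show $e^v$ never vanishes on $\mathbb{S}^1$, hence $\Phi|_{\mathbb{S}^1}$ is an immersion and the boundary zeros of $\phi_1$ are simple. In Step~2 the paper argues in the reverse direction from you---Courant gives two nodal domains, so the nodal set is a single arc meeting $\mathbb{S}^1$ in two points, hence degree $1$---while you set the degree to $d$, count $1+d+K$ nodal domains via Euler's formula, and let Courant force $d=1$, $K=0$. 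These are the same argument presented in opposite orders; your Euler-characteristic bookkeeping is more explicit and has the bonus of ruling out branch points on $\{\phi_1=0\}$ for free. Your final explicit identification with $(E_q,g_q)$ via the dilation $(x,y)=(\sqrt{\sigma_1}\,\phi_1,\sqrt{\sigma_1}\,\phi_2)$ is also a welcome addition, as the paper simply asserts the conclusion.
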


\begin{proof} Notice that as a conformal harmonic map, $\Phi$ is holomorphic. It remains to prove that $\Phi$ is a difféomorphism from the closed disk to the closed ellipse.

\medskip

\noindent \textbf{Step 1}: $e^{v}$ does not vanish by application of the maximum principle and Hopf lemma (see \cite[Claim 1.1]{Petridesnonplanardisk}): by conformality, $\Phi_{\vert \mathbb{S}^1} : \mathbb{S}^1 \to \mathcal{E}_{\sigma}$ is an immersion.

\medskip

\noindent \textbf{Step 2}: One coordinate, for instance $\phi_1$ is a first Steklov eigenfunction. $\phi_1$ has only two nodal domains. Then, the nodal line of $\phi_1$ is a connected line that ends at the boundary $\mathbb{S}^1$. Then, the degree of $\Phi_{\vert \mathbb{S}^1} : \mathbb{S}^1 \to  \mathcal{E}_{\sigma}$ is $1$.

\medskip

\noindent \textbf{Step 3}: A holomorphic map $\Phi$ between two simply connected domains such that the restriction to the boundary has degree $1$ has to be a biholomorphism.
\end{proof}

\begin{remark} To complete the classifictaion, it would be interesting to know if any conformal free boundary harmonic map from a disk into an ellipse has to be associated to a metric $g$ such that $(\mathbb{D},g)$ is Steklov isometric to $(E_q,g_q)$ (up to dilatation).
\end{remark}

\medskip

We apply these results to functions $f = h_{s,t}$ of \cite{Petridesnonplanardisk} for $s > 0$ and $t \geq 0$.
$$ h_{s,t}(x_1,x_2) = \left(x_1^{-s} + t x_2^{-s}\right)^{\frac{1}{s}}  $$
We obtain the following property:

\begin{prop} \label{prop:condnesscrit}
Assume that for $q>1$, $(E_q,g_q)$ is critical for $E_f^S(\mathbb{D},\cdot)$. Then:
\begin{equation}\label{eq:critEFS} q \leq 3 \text{ and }  \frac{ \partial_1 f(\frac{2\pi}{\sqrt{q}},2\pi\sqrt{q})}{\partial_2 f(\frac{2\pi}{\sqrt{q}},2\pi \sqrt{q})} =q. \end{equation}
In particular, let $g_{s,t}$ be a minimizer of $E_{h_{s,t}}^S(\mathbb{D},\cdot)$. We assume that
$$ s>0 \text{ and } t > 3^s \text{ or } s<0 \text{ and } t \geq ( 2^{-s}-1)^{-1}. $$
Then, every free boundary minimal immersion by  first and second eigenfunctions associated to the metric $g_{s,t}$ is non planar.
\end{prop}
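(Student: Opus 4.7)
The plan is to apply Theorem \ref{theo:critcombS} at the critical point $(E_q, g_q)$ and decode its items (3) and (4) using the explicit eigenfunctions from Theorem \ref{theo:classellipse}, then feed the resulting ratio condition into $f = h_{s,t}$ and combine with Proposition \ref{prop:plongéplan} and Theorem \ref{theo:largeineq} to rule out planar minimizers. First, to establish $q \leq 3$: Theorem \ref{theo:critcombS} produces a conformal harmonic map $\Phi : \mathbb{D} \to \mathbb{R}^n$ with $\phi_1$ a $\sigma_1$-eigenfunction and $\phi_2,\ldots,\phi_n$ all $\sigma_2$-eigenfunctions. On $(E_q, g_q)$, Theorem \ref{theo:classellipse} says $\sigma_1 = 1$ is simple with eigenfunction $x = \mathrm{Re}(P_1^q)$, and the second eigenvalue equals $\min(\tau_1^q, \sigma_2^q) = \min(q, 4q/(q+1))$: it is $q$ with eigenfunction $y = \mathrm{Im}(P_1^q)$ for $1 < q \leq 3$, and $4q/(q+1)$ with eigenfunction $\mathrm{Re}(P_2^q) = x^2 - y^2 - (q-1)/(2q)$ for $q > 3$. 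When $q > 3$, the $\sigma_2$-eigenspace is one-dimensional, so after a rotation in the $\sigma_2$-block we may take $\Phi = (ax, b(x^2-y^2-(q-1)/(2q)))$, for which conformality forces both $a^2 + 4b^2(x^2-y^2) \equiv 0$ and $-4b^2 xy \equiv 0$ on $\mathbb{D}$; hence $a = b = 0$, a contradiction.

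For the ratio condition, when $1 < q \leq 3$ the same polynomial-matching argument shows the only conformal $\Phi$ reduces to $(x, y)$ up to scale (at $q = 3$ the two-dimensional $\sigma_2$-eigenspace only admits an orthogonal rotation in the last coordinates, not a non-trivial new direction). Using $\beta_q dL_{E_q} = dt/\sqrt{q}$ from \eqref{eq:lengthmeasure} on the parametrization $(\cos t, \sin t/\sqrt{q})$ of $E_q$, one computes $\int_{E_q} x^2 \beta_q dL_{g_q} = \pi/\sqrt{q} = L/2$ with $L = 2\pi/\sqrt{q}$ from \eqref{eq:totallength}. Substituting into item (4) of Theorem \ref{theo:critcombS} yields $1/2 = \partial_1 f/(\partial_1 f + q \partial_2 f)$, that is, $\partial_1 f/\partial_2 f = q$ at $(\bar\sigma_1, \bar\sigma_2) = (2\pi/\sqrt{q}, 2\pi\sqrt{q})$, completing the first claim.

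For the $h_{s,t}$ application, direct differentiation gives $\partial_1 h_{s,t}/\partial_2 h_{s,t} = (x_2/x_1)^{s+1}/t$, equal to $q^{s+1}/t$ at $(2\pi/\sqrt{q}, 2\pi\sqrt{q})$; the ratio condition becomes $q^s = t$, so the only planar critical ellipse compatible with $h_{s,t}$ is $(E_{t^{1/s}}, g_{t^{1/s}})$, with $h_{s,t}$-value $(2\pi)^{-1}(2\sqrt{t})^{1/s}$. If $g_{s,t}$ yielded a planar free boundary minimal immersion, Proposition \ref{prop:plongéplan} would identify it (up to Steklov isometry) with $(E_q, g_q)$ for some $q \geq 1$. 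The case $q > 1$: $q = t^{1/s} \leq 3$ is incompatible with $s > 0$, $t > 3^s$, while in the $s < 0$ regime the ellipse value $(2\pi)^{-1}(2\sqrt{t})^{1/s}$ strictly exceeds the two-disk infimum $I^S(\mathbb{D} \sqcup \mathbb{D}, h_{s,t}) = (4\pi)^{-1}t^{1/s}$ whenever $t > 2^{2-2|s|}$, a bound implied by $t \geq (2^{-s}-1)^{-1}$, so this ellipse cannot be the minimum by Theorem \ref{theo:largeineq}. The case $q = 1$ (the Euclidean disk) is ruled out similarly: the disk value $(2\pi)^{-1}(1+t)^{1/s}$ meets $(4\pi)^{-1}t^{1/s}$ exactly at $t = (2^{-s}-1)^{-1}$ (the algebraic identity $(1+t)/t = 2^{-s}$ at equality), and for $s > 0$, $t > 3^s > 1$ the disk is excluded by \cite{Petridesnonplanardisk}. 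The main technical step is the conformality check ruling out $q > 3$; the subsequent value comparisons against $\mathbb{D} \sqcup \mathbb{D}$ are clean algebraic calculations once one knows the two-disk infimum is attained by two equal disks.
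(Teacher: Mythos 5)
Your proposal is correct and follows the same overall route as the paper: apply item (4) of Theorem~\ref{theo:critcombS} together with the explicit eigenfunctions from Theorem~\ref{theo:classellipse}, compute the boundary masses of $x$ and $y$ against $\beta_q\,dL_{E_q}$ to extract the ratio condition $\partial_1 f/\partial_2 f = q$, then specialize to $f=h_{s,t}$ and run value comparisons. Two places where you actually supply more than the paper does are worth noting. First, for $q>3$ the paper simply asserts that the necessary condition is $\sigma_2(E_q,g_q)=q$; you justify this by observing that $\sigma_1$ and $\sigma_2$ are both simple when $q>3$ and checking directly that $\Phi=(ax,\,b\,\mathrm{Re}(P_2^q))$ cannot be conformal, since $\langle\partial_x\Phi,\partial_y\Phi\rangle=-4b^2xy$ forces $b=0$ and then $|\partial_x\Phi|^2=|\partial_y\Phi|^2$ forces $a=0$. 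Second, for $s<0$ the paper's written proof explicitly compares only the Euclidean disk with the two-disk value $(4\pi)^{-1}t^{1/s}$, whereas the ellipses $E_q$, $1<q\leq 3$, can a priori still satisfy the necessary conditions $q\leq 3$ and $q^s=t$ when $|s|>1$ and $t\in[(2^{|s|}-1)^{-1},1)$; your comparison of the ellipse value $(2\pi)^{-1}(2\sqrt{t})^{1/s}$ with $(4\pi)^{-1}t^{1/s}$ via the reduction to $(2^{|s|}-2)^2\geq 0$ closes this case and is a genuine addition.

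The one soft spot in your write-up is the disposal of the Euclidean disk when $s>0$: you lean on a bare citation of \cite{Petridesnonplanardisk}, which in that paper targets the specific functionals $h_t^{\pm}$ rather than general $h_{s,t}$, so the appeal as stated is not quite load-bearing. The self-contained fix is the comparison the paper itself makes: evaluate the functional on a critical ellipse $(E_q,g_q)$ with $1<q\leq 3$ (the paper uses $q=t^{1/s}$, but note that the formula $\bar\sigma_2(g_q)=2\pi\sqrt{q}$ is only valid for $q\leq 3$, so when $t>3^s$ one should take $q=3$ instead), and check that $3^{s/2}+t\,3^{-s/2}<1+t$ whenever $t>3^{s/2}$ — which holds since $t>3^s>3^{s/2}$. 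With that substituted, the argument is fully rigorous. Everything else, including the degree count in item~(4) with $\int_{E_q} x^2\beta_q\,dL_{E_q}=\pi/\sqrt q$ and the differentiation $\partial_1 h_{s,t}/\partial_2 h_{s,t}=(x_2/x_1)^{s+1}/t$, matches the paper and is correct.
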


\begin{proof}
We prove \eqref{eq:critEFS}. Notice that on $(E_q,g_q)$,
\begin{equation} \label{eq:eigenvaluesigma1tau1} \bar{\sigma}_1(g_q) = \sigma_1^q = 1 \text{ and } \tau_1^q = q.\end{equation}
Then, a necessary condition to make the ellipse $(E_q,g_q)$ of Theorem \ref{theo:classellipse} critical for $E_F^S$ is
$\sigma_2(E_q,g_q) = q$. It is true if and only if $1\leq q \leq 3$. Indeed, $q=3$ corresponds to the first bifurcation point $\sigma_2^q = \tau_1^q$.

\medskip

We now compute the necessary condition (4) of Theorem \ref{theo:critcombS}. We recall that the length measure associated to $g_q$ is given by $\beta_q dL_{E_q}$ where $dL_{E_q}$ is computed in \eqref{eq:lengthmeasure}. Then, the masses of the coordinates are
$$ \int_{E_q} x^2 dL_{g_q} = \int_0^{2\pi} \cos^2(\theta) \frac{d\theta}{\sqrt{q}} = \frac{\pi}{\sqrt{q}} \text{ and } \int_{E_q} y^2 dL_{g_q} = \int_0^{2\pi} \frac{1}{q}\cos^2(\theta) \frac{d\theta}{\sqrt{q}} = \frac{\pi}{q\sqrt{q}}.$$
Then, using \eqref{eq:totallength} and \eqref{eq:eigenvaluesigma1tau1} for $1\leq q \leq 3$, we have
$$ \bar{\sigma}_1(E_q,g_q) = \frac{2\pi}{\sqrt{q}} \text{ and }  \bar{\sigma}_2(E_q,g_q) = 2\pi\sqrt{q}. $$
We then deduce the condition \eqref{eq:critEFS} from Theorem \ref{theo:critcombS}.

We deduce from \eqref{eq:critEFS} computed for $h_{s,t}$ :
$$  t  = q^s. $$
For $t > 3^s$, we deduce that there is not any ellipse $(E_q,g_q)$ for $q>1$ that is critical for $E_{h_s,t}^s$.
Moreover, we have
\begin{equation*} \begin{split} & E_{h_{s,t}}^s( E_{t^{\frac{1}{s}}}, g_{t^{\frac{1}{s}}} ) = h_{s,t}\left(\frac{2\pi}{t^{\frac{1}{2s}}},2\pi t^{\frac{1}{2s}} \right)  = (2\pi)^{-1} \left( 2 \sqrt{t} \right)^{\frac{1}{s}} \\ & E_{h_{s,t}}^s(\mathbb{D} ) = h_{s,t}\left(2\pi,2\pi\right)  = (2\pi)^{-1} \left( 1+t \right)^{\frac{1}{s}}  \end{split} \end{equation*}
Then, the Euclidean disk is not a minimiser for $s>0$, and $(E_q,g_q)$ is not a minimizer. Finally, for $s<0$, we have by \cite[Theorem 0.2]{Petridesnonplanardisk} that
$$ I_{h_{s,t}}^s(\mathbb{D} ) < I_{h_{s,t}}^s(\mathbb{D} \sqcup \mathbb{D} ) = h_{s,t}\left(0,4\pi\right) = (4\pi)^{-1} t^{\frac{1}{s}}.  $$
However, 
$$ (2\pi)^{-1}(1+t)^{\frac{1}{s}} < (4\pi)^{-1} t^{\frac{1}{s}} \Leftrightarrow t < ( 2^{-s}-1)^{-1}.$$
We completed the proof of the proposition.
\end{proof}

Finally, we notice that for any $n$ and $q$, the quantity
$$ \sigma_n^q \tau_n^q L_{g_q}(E_q)^2 =  4\pi^2 n^2 $$
is independent of $q$. Then, we immediately obtain new minimizers of the Hersch-Payne-Schiffer functional $f_{1,2}(x) = (x_1 x_2)^{-1}$:
\begin{prop} \label{prop:crithps}
For $1\leq t \leq 3$ critical ellipses $co(E_t) = \{ x^2 + t y^2 \leq 1\}$ endowed with the Euclidean metric $\xi$ and the weight $\beta_t(x,y) = (x^2 + t y^2)^{-1}$ on the boundary $E_t = \{ x^2 + t y^2 = 1\}$ are minimizers of $E_{f_{1,2}}^S(\mathbb{D},\cdot)$. Moreover, they are the only minimizers of $E_{f_{1,2}}^S(\mathbb{D},\cdot)$ associated to a free boundary minimal immersion into an ellipse.
\end{prop}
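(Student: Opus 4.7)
The plan is to combine two inputs already at hand: the product identity $\sigma_n^q\tau_n^q L_{g_q}(E_q)^2 = 4\pi^2 n^2$ stated in the paragraph just before the proposition, and the Hersch--Payne--Schiffer upper bound $\bar\sigma_1\bar\sigma_2 \le 4\pi^2$ on the disk recalled in the introduction (case $m=1$, $n=2$ of $f_{m,n}$, where $m+n-1=2$). Since $E_{f_{1,2}}^S(\mathbb{D},g) = 1/(\bar\sigma_1(g)\bar\sigma_2(g))$, a minimizer is exactly a metric that saturates this upper bound, so the strategy is to identify the eigenvalues of $(co(E_t),g_t)$ provided by Theorem \ref{theo:classellipse} as the first two nonzero Steklov eigenvalues and read off $\bar\sigma_1\bar\sigma_2 = 4\pi^2$.

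For the existence half, I would first pin down the two smallest positive Steklov eigenvalues of $(co(E_t),g_t)$. From the closed forms in Theorem \ref{theo:classellipse}, $\sigma_1^t = 1$, $\tau_1^t = t$, and a direct simplification gives $\sigma_2^t = 4t/(t+1)$. The inequality $\tau_1^t \le \sigma_2^t$ rearranges to $t \le 3$, so for $1 \le t \le 3$ the first two nonzero Steklov eigenvalues are exactly $\sigma_1^t = 1$ and $\tau_1^t = t$. Combining with $L_{g_t}(E_t) = 2\pi/\sqrt{t}$ from \eqref{eq:totallength}, the $n=1$ instance of the product identity yields $\bar\sigma_1\bar\sigma_2 = 4\pi^2$ at $(co(E_t),g_t)$. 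Together with the HPS bound this forces $(co(E_t),g_t)$ to minimize $E_{f_{1,2}}^S(\mathbb{D},\cdot)$ for every $1 \le t \le 3$.

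For the uniqueness half, I would start from a minimizer $g$ that is associated to a free boundary minimal immersion $\Phi\colon \mathbb{D}\to\R^2$ into an ellipse. By Theorem \ref{theo:critcombS} applied to $f_{1,2}$, the two coordinates of $\Phi$ are first and second Steklov eigenfunctions, so Proposition \ref{prop:plongéplan} applies and identifies $(\mathbb{D},g)$, up to dilation, with $(co(E_q),g_q)$ for some $q \ge 1$. Since a minimizer is critical, condition \eqref{eq:critEFS} of Proposition \ref{prop:condnesscrit} gives $q \le 3$; the ratio requirement $\partial_1 f_{1,2}/\partial_2 f_{1,2} = x_2/x_1$ evaluated at $(2\pi/\sqrt{q},2\pi\sqrt{q})$ equals exactly $q$, so it imposes no further constraint and the full range $1 \le q \le 3$ is recovered.

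The only non-routine point is the eigenvalue ordering in the second paragraph: checking that $\tau_1^t$, not $\sigma_2^t$, equals $\sigma_2(co(E_t),g_t)$ precisely on $1\le t\le 3$. This is the same algebraic cut-off that underlies the bifurcation point from planar to non-planar critical disks in Proposition \ref{prop:nonplanarbifurcation}, and it reduces to a short rearrangement of the closed forms for $\sigma_n^t$ and $\tau_n^t$. Once this ordering is in place, the proof is essentially a one-line comparison with the HPS bound plus an application of Propositions \ref{prop:plongéplan} and \ref{prop:condnesscrit}.
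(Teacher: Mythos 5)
Your proof is correct and fills in exactly the argument the paper leaves implicit (the paper gives no written proof, only the remark that the product identity $\sigma_n^q\tau_n^q L_{g_q}(E_q)^2=4\pi^2n^2$ makes the statement "immediate"). The computation $\sigma_2^t=4t/(t+1)$, the cut-off $\tau_1^t\le\sigma_2^t\iff t\le 3$, the saturation of the Hersch--Payne--Schiffer bound $\bar\sigma_1\bar\sigma_2\le 4\pi^2$, and the appeal to Propositions \ref{prop:plongéplan} and \ref{prop:condnesscrit} for uniqueness are all the intended steps.
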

Notice that by Theorem \ref{theo:classellipse}, computations given in the proof of Proposition \ref{prop:condnesscrit} show that $(co(E_q),\xi,\beta_q)$ are still critical points or $E_{f_{1,k}}^S(\mathbb{D},\cdot)$, where $k$ is the index of $\tau_1^q$.

\section{Inequalities on conformal invariants} \label{sec2}
We aim at proving Theorem \ref{theo:largeineq}. We apply the following proposition:

\begin{prop} \label{prop:convmetrictest}  Let $\beta_0: \Sigma \to \R_+$ be a function. We assume that $\beta_0=0$ or that $\beta_0$ is a positive function. Let $\beta_i : \mathbb{D} \to \R_+$ for $i \in \{1,\cdots,k\}$ be positive functions. There is a sequence $\beta_\eps : \Sigma \to \R_+$ of weights on $(\Sigma,g)$ that satisfy for any $m \in \mathbb{N}^*$,
$$ \bar{\sigma}_m(\Sigma,g, \beta_\eps) \to \bar{\sigma}_m( \tilde{\Sigma},\tilde{g}, \tilde{\beta})   $$
as $\eps \to 0$, where $(\tilde{\Sigma},\tilde{g}, \tilde{\beta}) = (\Sigma,g,\beta_0) \sqcup (\mathbb{D},\xi,\beta_1) \sqcup \cdots \sqcup (\mathbb{D},\xi,\beta_k)$
\end{prop}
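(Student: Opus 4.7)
The plan is to construct $\beta_\eps$ by concentrating boundary mass on $k$ small arcs of $\partial\Sigma$, each of which, once conformally uniformized, yields a region Steklov-isometric to $(\mathbb{D},\xi,\beta_i)$, while the complement remains close to $(\Sigma,g,\beta_0)$. By conformal invariance \eqref{conformalcovariance}, this amounts to constructing a conformal metric $\hat{g}_\eps=\hat{\beta}_\eps^2 g$ on $\Sigma$ that degenerates to the nodal metric on $\tilde{\Sigma}$ as $\eps\to 0$: pinching a vanishing arc on $\partial\Sigma$ near each chosen point to split off a disk factor.

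First I would fix $k$ distinct points $p_1,\dots,p_k\in\partial\Sigma$ in the smooth interior of the boundary components, together with pairwise disjoint conformal half-disk charts $\phi_i:\overline{D^+}\to U_i\subset\Sigma$ satisfying $\phi_i(0)=p_i$ and $\phi_i((-1,1))\subset\partial\Sigma$, where $D^+=\{z\in\mathbb{D}:\operatorname{Im}z>0\}$. For $\eps>0$, let $U_i^\eps=\phi_i(D_\eps^+)$ with $D_\eps^+=\{|z|<\eps,\operatorname{Im}z>0\}$, and choose a conformal diffeomorphism $\tau_\eps:D_\eps^+\to\mathbb{D}$ obtained by composing $z\mapsto z/\eps$ with a fixed biholomorphism $D^+\to\mathbb{D}$. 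I then set $\hat{\beta}_\eps^2 g$ equal to $(\tau_\eps\circ\phi_i^{-1})^\star(\hat{\beta}_i^2\xi)$ on $U_i^\eps$, equal to $\hat{\beta}_0^2 g$ outside $\bigcup_i U_i^{2\eps}$, and interpolate smoothly in each collar $U_i^{2\eps}\setminus U_i^\eps$ (with a cut-off sending the weight to zero near $\partial\Sigma$ in the case $\beta_0=0$). Restricting to $\partial\Sigma$ produces $\beta_\eps$.

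For the upper bound $\limsup_\eps\bar{\sigma}_m(\Sigma,g,\beta_\eps)\leq\bar{\sigma}_m(\tilde{\Sigma},\tilde{g},\tilde{\beta})$, I would take the first $m+1$ eigenfunctions $\psi_0,\dots,\psi_m$ of $(\tilde{\Sigma},\tilde{g},\tilde{\beta})$, decompose each along the disjoint-union pieces, and transplant: leave the main piece on $\Sigma$ and transport each disk-piece to the corresponding $U_i^\eps$ via $(\tau_\eps\circ\phi_i^{-1})^{-1}$. Conformal invariance preserves the Dirichlet energy, while the pulled-back boundary $L^2$-mass on $\phi_i((-\eps,\eps)\times\{0\})$ equals $\int_{\partial\mathbb{D}}\psi^2\beta_i dL_\xi$. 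Cut-off corrections on the collars contribute $o(1)$ to both the numerator and the denominator of the Rayleigh quotient, since $\int_{\partial\Sigma\cap(U_i^{2\eps}\setminus U_i^\eps)}\beta_\eps dL_g\to 0$; the min-max principle then yields the upper bound.

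The main obstacle is the matching lower bound. I would extract Steklov eigenfunctions $(\varphi_j^\eps)_{0\leq j\leq m}$ of $(\Sigma,g,\beta_\eps)$ orthonormalized in boundary $L^2$-mass; the already established upper bound gives a uniform bound on their Dirichlet energy, hence uniform $H^1$-bounds once pulled back by $\tau_\eps\circ\phi_i^{-1}$ to $\mathbb{D}$ (or restricted to the main piece). Extracting weak limits $\bar{\varphi}_j^i$ on each component of $\tilde\Sigma$ produces a candidate test family. The delicate steps are to show that (i) no mass escapes into the neck annuli $\phi_i(D_{2\eps}^+\setminus D_\eps^+)$, which I expect to follow from a capacity/trace estimate on these thin collars combined with the uniform $H^1$-bound, and (ii) the limit family stays linearly independent in $L^2(\partial\tilde\Sigma,\tilde\beta dL_{\tilde g})$, so that the min-max characterization on $\tilde\Sigma$ applied to this $(m+1)$-dimensional space delivers $\liminf_\eps\bar{\sigma}_m(\Sigma,g,\beta_\eps)\geq\bar{\sigma}_m(\tilde{\Sigma},\tilde{g},\tilde{\beta})$. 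Both (i) and (ii) are the crux of the proof; the techniques developed in \cite{PetridesVarMethod} for degenerating conformal Steklov problems should be directly adaptable here.
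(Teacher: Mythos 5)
Your overall strategy mirrors the paper's: concentrate the weight near $k$ boundary points, compare Rayleigh quotients by transplanting eigenfunctions in both directions, and show that no mass escapes through the neck regions. But the conformal map in your construction of $\beta_\eps$ is the wrong one, and this causes $\bar{\sigma}_m(\Sigma,g,\beta_\eps)$ to converge to the wrong limit model, breaking both the upper and lower bound.

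You define $\tau_\eps : D_\eps^+ \to \mathbb{D}$ by composing $z\mapsto z/\eps$ with a \emph{fixed} biholomorphism of the half-disk $D^+$ onto $\mathbb{D}$. Such a map sends the boundary segment $(-\eps,\eps)\times\{0\}$ onto a proper closed arc $A\subsetneq\partial\mathbb{D}$ and the semicircle $\{|z|=\eps,\ \operatorname{Im}z>0\}$ onto the complementary arc $\partial\mathbb{D}\setminus A$. When you pull back $\hat\beta_i^2\xi$ and restrict to $\partial\Sigma$, you only recover the part of $\beta_i$ supported on $A$; the preimage of $\partial\mathbb{D}\setminus A$ lies in the interior of $\Sigma$, where there is no Steklov condition. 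Concretely, if $\psi$ is a $(\mathbb{D},\xi,\beta_i)$ eigenfunction transplanted via $\tau_\eps^{-1}$, the boundary integral on $\partial\Sigma\cap U_i^\eps$ produces $\int_A\psi^2\beta_i\,dL_\xi$ rather than $\int_{\partial\mathbb{D}}\psi^2\beta_i\,dL_\xi$. So the correct limit of your $\bar{\sigma}_m$ is the spectrum of a mixed Steklov--Neumann problem on $\mathbb{D}$ (Steklov on $A$, Neumann on $\partial\mathbb{D}\setminus A$), not $\bar{\sigma}_m(\mathbb{D},\xi,\beta_i)$.

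The fix is to use a biholomorphism from the \emph{full} upper half-plane $\mathbb{R}^2_+$ onto $\mathbb{D}$, as the paper does with $\zeta:\mathbb{D}\to\mathbb{R}^2_+$ normalized by $\zeta(1)=0$, $\zeta(-1)=\infty$, $\zeta(0)=i$. After rescaling by $\eps^{-1}$, the fixed half-disk chart $\mathbb{D}_\delta^+$ becomes $\mathbb{D}_{\delta/\eps}^+$, which exhausts $\mathbb{R}^2_+$ as $\eps\to 0$, and $\zeta^{-1}$ of the rescaled boundary segment covers $\partial\mathbb{D}$ except an arbitrarily small arc around $-1$. The boundary scaling factor then appears naturally as the Poisson-type density $\frac{2\eps}{(\eps+z_2)^2+z_1^2}$, and the full weight $\beta_i$ is recovered in the limit. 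With this correction your outline for the upper bound (cut-off transplants with $O\bigl((\ln\tfrac1\eps)^{-1}\bigr)$ error), and the lower bound (extraction of local $\mathcal{C}^2$ limits of the eigenfunctions together with logarithmic no-concentration estimates on the neck and passing orthonormality to the limit) is essentially the paper's proof. The only other, inessential, deviation is that you replace the weight in the collar and interpolate, while the paper works with the additive superposition $\beta_0+\sum_i(\text{bubble}_i)$, which avoids having to control an interpolation region separately.
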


\begin{proof}[Proof of Theorem \ref{theo:largeineq} with Proposition \ref{prop:convmetrictest}]
Let $\delta>0$. Let $\beta_0: \Sigma \to \R_+$ a non-negative smooth function and $\beta_i : \mathbb{D} \to \R_+$ for $i= \{1,\cdots,k\}$ be non-zero non-negative smooth functions such that
$$ E_f^S( \tilde{\Sigma},\tilde{g}, \tilde{\beta}  ) \leq I_c^S(\tilde{\Sigma}  ,[\tilde{g}],f) + \delta$$
where if $\beta_0 \neq 0$,
$$ (\tilde{\Sigma},\tilde{g}, \tilde{\beta}) = (\Sigma,g,\beta_0) \sqcup (\mathbb{D},\xi,\beta_1) \sqcup \cdots \sqcup (\mathbb{D},\xi,\beta_k) $$
and if $\beta_0 = 0$,
$$ (\tilde{\Sigma},\tilde{g}, \tilde{\beta}) = (\mathbb{D},\xi,\beta_1) \sqcup \cdots \sqcup (\mathbb{D},\xi,\beta_k). $$
From Proposition \ref{prop:convmetrictest}, we obtain a weight $\beta_\eps$ such that 
$$f(\bar{\sigma}_1(\Sigma,g,\beta_\eps),\cdots,\bar{\sigma}_m(\Sigma,g,\beta_\eps)) \to f(\bar{\sigma}_1(\tilde{\Sigma},\tilde{g},\tilde{\beta}),\cdots,\bar{\sigma}_m(\tilde{\Sigma},\tilde{g},\tilde{\beta})) $$
as $\eps \to 0$. Testing $\beta_\eps$ in the variational characterization of $I_c^S(\Sigma,[g],f)$, we obtain 
$$ I_c^S(\Sigma,[g],f) \leq \liminf_{\eps \to 0} E_f^S(\Sigma,g,\beta_\eps) \leq E_f^S(\tilde{\Sigma},\tilde{g},\tilde{\beta}) \leq I_c^S(\tilde{\Sigma},[\tilde{g}],f) + \delta.$$
Letting $\delta \to 0$ ends the proof of the theorem.
\end{proof}

Therefore, the section is now devoted to the proof of Proposition \ref{prop:convmetrictest}. 
We let $x_1,\cdots,x_k \in \Sigma$ be distinct points and 
$\Omega_1,\cdots,\Omega_k$ be pairwise disjoint open neighborhoods of $x_1,\cdots,x_k$ such that there are diffeomorphisms 
$\theta_i : \Omega_i \to \mathbb{D}_{\delta}^+$ that satisfy
$$ \theta_i(\Omega_i \cap \partial \Sigma) = [-\delta,\delta] \times \{0\} $$
$$ \theta_i^\star( e^{2u_i} \xi ) = g $$ 
$$ \theta_i(x_i) = 0.$$
We now let $\zeta : \mathbb{D} \to \mathbb{R}^2_+$ be the biholomorphism defined by $\zeta(1) = 0$, $\zeta(-1) = \infty$, $\zeta(0) = i$. If $x \in \Omega_i$, we denote $z^i = \theta_i(x)$.
$$ \beta_\eps(x) = \beta_0(x) + \sum_{i=1}^k  \eta(z^i)  e^{-u_i(z^i)} \beta_i\left( \zeta^{-1}\left(  \frac{z^i}{\eps}\right) \right) \frac{2 \eps}{(\eps + z^i_2)^2 + \left(z^i_1\right)^2}, $$
where $\eta \in \mathcal{C}^\infty_c\left( \mathbb{D}_\delta\right)$ satisfies $0 \leq \eta \leq 1$ and $\eta = 1$ in $\mathbb{D}_{\frac{\delta}{2}}$.

We first prove the following 
\begin{cl} \label{cl:boundedsigma}
$$ \bar{\sigma}_m(\Sigma,g, \beta_\eps) \leq \bar{\sigma}_m( \tilde{\Sigma},\tilde{g}, \tilde{\beta}  ) + O\left(\frac{1}{\ln\eps}\right) $$
as $\eps \to 0$.
\end{cl}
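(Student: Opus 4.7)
The plan is to use the min-max characterization of $\sigma_m(\Sigma,g,\beta_\eps)$: it suffices to exhibit an $(m+1)$-dimensional subspace $V_\eps \subset C^\infty(\Sigma)$ such that
$$ \max_{\phi \in V_\eps \setminus \{0\}} \frac{\int_\Sigma \vert \nabla \phi \vert_g^2 dA_g}{\int_{\partial \Sigma} \phi^2 \beta_\eps dL_g} \cdot \int_{\partial \Sigma} \beta_\eps dL_g \leq \bar{\sigma}_m(\tilde{\Sigma},\tilde{g},\tilde{\beta}) + O\!\left(\frac{1}{\vert \ln \eps \vert}\right). $$
I build $V_\eps$ by transplanting the first $m+1$ Steklov eigenfunctions of the disjoint union $(\tilde{\Sigma},\tilde{g},\tilde{\beta})$ to $\Sigma$. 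Concretely, pick two scales $\eps \ll r_\eps \ll 1$ (optimizing later with $r_\eps = \sqrt{\eps}$) and a radial logarithmic cutoff $\chi_\eps$ in the coordinates $z^i=\theta_i(x)$ that equals $1$ for $\vert z^i\vert \leq r_\eps/2$ and $0$ for $\vert z^i\vert \geq r_\eps$. If $\Psi$ is an eigenfunction on the outer piece $(\Sigma,g,\beta_0)$, I take its transplant to be $(1-\sum_i \chi_\eps(z^i))\Psi$. If $\Psi$ is an eigenfunction on the $i$-th disk $(\mathbb{D},\xi,\beta_i)$, I set its transplant to be $\chi_\eps(z^i)\cdot \Psi(\zeta^{-1}(z^i/\eps))$ on $\Omega_i$ and $0$ elsewhere. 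By construction, the transplants of outer eigenfunctions and of disk eigenfunctions from distinct bubbles have disjoint supports, so the family has $m+1$ linearly independent elements for $\eps$ small.

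The crucial computation is that for $\Psi$ a disk eigenfunction transplanted via $w = \zeta^{-1}(z^i/\eps)$, 2D conformal invariance of the Dirichlet energy together with the identity $\vert (\zeta^{-1})'(z/\eps)\vert/\eps \cdot \eps\vert\zeta'(w)\vert = 1$ gives, away from the cutoff region,
$$ \int_{\Omega_i} \vert \nabla (\Psi \circ \zeta^{-1}(z^i/\eps)) \vert_g^2 dA_g = \int_{\mathbb{D}} \vert \nabla \Psi \vert^2 dA_\xi. $$
On the boundary, the choice of bubble factor $\frac{2\eps}{(\eps+z^i_2)^2 +(z^i_1)^2}$ in $\beta_\eps$ is precisely the Jacobian needed to pull back $\beta_\eps dL_g$ to $\beta_i dL_\xi$ on $\partial \mathbb{D}$ (this is where the factor $e^{-u_i}$ cancels the metric conformal factor). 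Consequently the Rayleigh quotient of each transplant converges to the Rayleigh quotient of the parent eigenfunction, and the perimeter $\int_{\partial \Sigma}\beta_\eps dL_g$ converges to the perimeter of the disjoint union. The remaining contributions come from the cutoff region, where $\nabla \chi_\eps$ lives; a standard 2D capacity calculation gives $\int \vert \nabla \chi_\eps\vert^2 dA = O(\vert \ln(r_\eps/\eps) \vert^{-1})$, which is $O(\vert \ln \eps \vert^{-1})$ for $r_\eps = \sqrt{\eps}$.

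To finish, I control the max over $V_\eps$ via a Gram matrix argument: disjointness of supports makes the matrices of Dirichlet energies and of boundary inner products block-diagonal up to $O(\vert \ln \eps\vert^{-1})$ errors, so the generalized eigenvalues of the pair approach those of the disjoint union. Combining with the perimeter convergence yields the stated bound on $\bar{\sigma}_m(\Sigma,g,\beta_\eps)$.

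The main obstacle will be keeping track of the cutoff errors cleanly — in particular showing both that the numerator of the Rayleigh quotient of each test function picks up only $O(\vert \ln \eps\vert^{-1})$ from $\nabla \chi_\eps$ (which forces a smart choice of $r_\eps$ balancing the two cutoff regions) and that the denominator does not lose mass faster than this (the eigenfunctions on $(\mathbb{D},\xi,\beta_i)$ must have enough boundary mass away from the point $\zeta(-1)=\infty$, which corresponds to $z^i$ of order $r_\eps$ where the cutoff acts). Once the bookkeeping is done, the near-orthogonality of supports upgrades the bound from individual test functions to the max over $V_\eps$.
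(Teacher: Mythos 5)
Your overall strategy is the same as the paper's: transplant an orthonormal family of first $m+1$ eigenfunctions of $(\tilde{\Sigma},\tilde{g},\tilde{\beta})$ to $\Sigma$ via logarithmic cutoffs, use the bubble factor in $\beta_\eps$ (which you correctly identify as the Jacobian of the rescaled M\"obius map $z \mapsto \zeta^{-1}(z/\eps)$, so that $\beta_\eps\, dL_g$ pulls back to $\beta_i\, dL_\xi$ on $\mathbb{S}^1$), and control the Rayleigh quotient of the worst element of the $(m+1)$-dimensional test space. The paper implements this with two nested cutoffs $\chi_0^\eps = 1-\sum_i\eta_{\eps^{1/8}}(z^i)$ and $\chi_i^\eps = \eta_{\eps^{1/4}}(z^i)$ whose transition annuli are disjoint, so each test function $\varphi^j_\eps$ decomposes into pieces with genuinely disjoint supports and the Rayleigh quotient of $\psi_\eps = \sum a_j^\eps\varphi_\eps^j$ literally splits into an outer and several inner contributions with no cross terms. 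Your version uses a single cutoff $\chi_\eps$ and $1-\chi_\eps$, which \emph{does} produce overlap of outer and inner transplants on the transition annulus (so the Gram matrices are only block-diagonal up to errors, as you say); this costs a bit more bookkeeping but is salvageable.

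The genuine gap is in your cutoff. You specify $\chi_\eps = 1$ for $|z^i| \le r_\eps/2$ and $\chi_\eps = 0$ for $|z^i| \ge r_\eps$, and then claim $\int |\nabla\chi_\eps|^2\, dA = O\bigl(|\ln(r_\eps/\eps)|^{-1}\bigr)$. These two statements are incompatible: a cutoff that transitions over an annulus whose radii differ by a bounded factor has Dirichlet energy bounded below by a positive constant (for a radial logarithmic cutoff between $r_\eps/2$ and $r_\eps$ the energy is exactly $2\pi/\ln 2$). The capacity-type bound $C/\ln(B/A)$ requires the transition annulus $A < |z^i| < B$ to satisfy $B/A \to \infty$. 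As written, your error term is $O(1)$ rather than $O(1/\ln\eps)$, and the claim fails. The fix is what the paper does: take a logarithmic cutoff transitioning between two \emph{power-separated} scales, e.g.\ between $r^2$ and $r$ with $r = \eps^{1/4}$ for the inner part and between $r^2$ and $r$ with $r = \eps^{1/8}$ for the outer part, each contributing $O(1/\ln(1/r)) = O(1/\ln(1/\eps))$. Equivalently, in your single-cutoff set-up you would want $\chi_\eps = 1$ on $|z^i| \le \eps^{a}$ and $\chi_\eps = 0$ on $|z^i| \ge \eps^{b}$ with $a > b > 0$; then the claimed formula $O(|\ln \eps|^{-1})$ is correct, and you also have to verify that both the spurious boundary mass of the bubble picked up by the outer transplant in $|z^i| > \eps^a$ and the boundary mass lost by the disk transplant near $\zeta(-1)=\infty$ are $o(1/\ln\eps)$ --- both are in fact power-law small for any such $a,b$, as in the paper's polynomial error terms $O(\eps^{1/8})$, $O(\eps^{1/4})$.

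One further small correction: the transplants of the outer eigenfunctions and the transplant of a disk eigenfunction from the \emph{same} bubble do \emph{not} have disjoint supports in your single-cutoff set-up (they overlap where $0<\chi_\eps<1$); only the bubble-to-bubble transplants have disjoint supports. Linear independence of the $m+1$ test functions still holds for small $\eps$, but the justification should not appeal to full disjointness.
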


\begin{proof}
Let $(\varphi_0,\cdots, \varphi_m)$ be a $L^2(\partial \tilde{\Sigma},\tilde{\beta}dL_{\tilde{g}})$-orthonormal family of $m+1$ first eigenfunctions associated to $(\tilde{\Sigma},\tilde{g}, \tilde{\beta})$. From these functions, we construct test functions for $\bar{\sigma}_m(\Sigma,g, \beta_\eps)$. We set the cut-off functions with disjoint support
$$ \chi_0^\eps(x) = 1 - \sum_{i=1}^k \eta_{\eps^{\frac{1}{8}}}(z^i) \text{ and } \chi_i^\eps(x) = \eta_{\eps^\frac{1}{4}}(z^i),$$
where $\eta_{r} \in \mathcal{C}^\infty_c(\mathbb{D}_r)$ is such that $0 \leq \eta_r \leq 1$, $\eta_r = 1$ in $\mathbb{D}_{r^2}$ and 
$$ \int_{\mathbb{D}_r} \vert \nabla \eta_r \vert^2_\xi dA_\xi \leq 
\frac{C}{\ln \frac{1}{r}} $$
and we obtain the following test functions for $1 \leq j \leq m$:
$$ \varphi_\eps^j(x) = \chi_0^\eps(x) \left(\varphi_j\right)_{\vert \Sigma}(x) + \sum_{i=1}^k \chi_i^\eps(x) \left(\varphi_j\right)_{\vert D_i}\left( \zeta^{-1}\left( \frac{z^i}{\eps} \right) \right).$$
We then have the existence of $a = (a_0,\cdots,a_m) \in \mathbb{S}^m$ such that
\begin{equation} \label{eq:eqtestpsieps} \sigma_m(\Sigma,g, \beta_\eps) \leq \frac{\int_\Sigma \left\vert \nabla \psi_\eps \right\vert_g^2 dA_g}{\int_{\partial \Sigma} \psi_\eps^2 \beta_\eps dL_g} \text{ where } \psi_\eps = \sum_{j=0}^m a_\eps^j \varphi_\eps^j .\end{equation}
Now we set $T_\eps : \tilde{\Sigma} \to \R$ the function defined by:
$$T_\eps = \sum_{j=0}^m a_j^\eps \left(\varphi_j\right). $$
that is cutted in evry connected part as $\psi_0^\eps : \Sigma \to \R$ :
$$ \psi_0^\eps = \chi_0^\eps (T_\eps)_{\vert \Sigma} $$
and as $\psi_i^\eps : \mathbb{D}\to \R$ for $1 \leq i \leq k$ and $z \in \mathbb{D}$,
$$\psi_i^\eps(z) = \tilde{\chi}_i^\eps (T_\eps)_{\vert D_i} $$
where we set $\tilde{\chi}_i^\eps(z) = \eta_{\eps^{\frac{1}{4}}}\left( \eps \zeta(z)  \right) $
All functions $\chi_i^\eps$ have disjoint support. We then have that
$$ \int_\Sigma \left\vert \nabla \psi_\eps \right\vert_g^2 dA_g = \int_\Sigma \left\vert \nabla \psi_0^\eps \right\vert_g^2 dA_g + \sum_{i=1}^k \int_{\mathbb{D}} \left\vert \nabla \psi_i^\eps \right\vert_\xi^2 dA_\xi $$
and that
$$ \int_{\partial \Sigma} (\psi_\eps)^2 \beta_\eps dL_g = \int_{\Sigma} (\psi_0^\eps)^2 \beta_\eps dL_g + \sum_{i=1}^k \int_{\mathbb{R}\times \{0\}} \psi_i^\eps( \zeta^{-1}(s))^2 \beta_\eps( \theta_i^{-1}( \eps  s  ) ) \eps ds. $$
Knowing that $\vert \nabla(T_\eps)_{\vert \Sigma} \vert^2_g$ is uniformly bounded,  we compute
\begin{align*} \int_\Sigma \left\vert \nabla \psi_0^\eps \right\vert_g^2 dA_g = & \int_\Sigma \vert \nabla(T_\eps)_{\vert \Sigma} \vert^2_g dA_g +  \int_\Sigma ((\chi_0^\eps)^2-1) \vert \nabla(T_\eps)_{\vert \Sigma} \vert^2_g dA_g  \\
&+ 2 \int_\Sigma \chi_0^\eps (T_\eps)_{\vert \Sigma} \langle \nabla \chi_0^\eps , \nabla (T_\eps)_{\vert \Sigma} \rangle_g dA_g  +  \int_\Sigma \vert \nabla \chi_0^\eps  \vert^2_g \left((T_\eps)_{\vert \Sigma}\right)^2 dA_g \\
=&  \int_{\Sigma} \vert \nabla(T_\eps)_{\vert \Sigma} \vert^2_g dA_g + O(\eps^{\frac{1}{4}}) + O\left( \eps^{\frac{1}{8}} \left(\ln\frac{1}{\eps}\right)^{-\frac{1}{2}} \right) + O\left( \left(\ln\frac{1}{\eps}\right)^{-1}\right) \end{align*}
as $\eps \to 0$ and knowing that  $\vert \nabla(T_\eps)_{\vert D_i} \vert^2_\xi$ is uniformly bounded, 
\begin{align*} \int_{\mathbb{D}} \left\vert \nabla \psi_i^\eps \right\vert_\xi^2 dA_\xi = & \int_{\mathbb{D}} \vert \nabla(T_\eps)_{\vert D_i} \vert^2_\xi dA_\xi + \int_{\mathbb{D}} ((\tilde{\chi}_i^\eps)^2-1) \vert \nabla(T_\eps)_{\vert D_i} \vert^2_\xi dA_\xi \\ & + 2 \int_{\mathbb{D}} \tilde{\chi}_i^\eps (T_\eps)_{\vert D_i} \langle \nabla \chi_i^\eps , \nabla (T_\eps)_{\vert D_i} \rangle_\xi dA_\xi  +  \int_{\mathbb{D}} \vert \nabla \tilde{\chi}_i^\eps  \vert^2_\xi \left((T_\eps)_{\vert D_i}\right)^2 dA_\xi \\
=&  \int_{\mathbb{D}} \vert \nabla(T_\eps)_{\vert D_i} \vert^2_\xi dA_\xi + O(\eps^{\frac{1}{2}}) + O\left( \eps^{\frac{1}{4}} \left(\ln\frac{1}{\eps}\right)^{-\frac{1}{2}} \right) + O\left( \left(\ln\frac{1}{\eps}\right)^{-1}\right) \end{align*}
as $\eps \to 0$. The previous equalities lead to
\begin{equation} \label{eq:eqgradpsieps} \int_\Sigma \left\vert \nabla \psi_\eps \right\vert_g^2 dA_g =\int_{\Sigma} \vert \nabla(T_\eps)_{\vert \Sigma} \vert^2_g dA_g + \sum_{i=1}^k \int_{\mathbb{D}} \vert \nabla(T_\eps)_{\vert D_i} \vert^2_\xi dA_\xi +  O\left( \left(\ln\frac{1}{\eps}\right)^{-1}\right)\end{equation}
as $\eps\to 0$. We also compute
\begin{align*} \int_{\partial\Sigma} (\psi_0^\eps)^2 \beta_\eps dL_g = \int_{\partial \Sigma} \left(T_\eps\right)_{\vert \Sigma}^2 \beta_0 dL_g + \int_{\partial\Sigma} \left(T_\eps\right)_{\vert \Sigma}^2 ( (\chi_0^\eps)^2 \beta_\eps - \beta_0) dL_g,    \\  \end{align*}
where by definition of $\beta_\eps$,
\begin{align*} \vert (\chi_0^\eps)^2 \beta_\eps - \beta_0 \vert = & \left\vert \left((\chi_0^\eps)^2-1\right)\beta_0 + \left(\chi_0^\eps\right)^2  \sum_{i=1}^k  \eta(z^i)  e^{-u_i(z^i)} \beta_i\left( \zeta^{-1}\left(  \frac{z^i}{\eps}\right) \right) \frac{2 \eps}{(\eps + z^i_2)^2 + \left(z^i_1\right)^2} \right\vert \\
\leq & C \left( \sum_i \mathbf{1}_{ \{ \vert z^i \vert \leq \eps^{\frac{1}{8}} \} } + \eps^{\frac{1}{2}}  \right) \end{align*}
implies that 
$$ \int_{\partial\Sigma} (\psi_0^\eps)^2 \beta_\eps dL_g = \int_{\partial \Sigma} \left(T_\eps\right)_{\vert \Sigma}^2 \beta_0 dL_g + O(\eps^{\frac{1}{8}}) $$
as $\eps \to 0$. We finally compute for $1\leq i \leq k$,
\begin{align*} & \int_{\mathbb{R}\times \{0\}} \psi_i^\eps( \zeta^{-1}(s))^2 \beta_\eps( \theta_i^{-1}( \eps  s  ) ) \eps ds = \int_{\mathbb{S}^1} \left(T_\eps\right)_{\vert D_i}^2 \beta_i dL_{\xi} \\ 
& + \int_{\mathbb{R}\times \{0\}} \left(\left(T_\eps\right)_{\vert D_i}\left( \zeta^{-1}(s)\right) \right)^2 \left( \eta_{\eps^{\frac{1}{4}}}\left( \eps s  \right)  \beta_\eps( \theta_i^{-1}( \eps  s  ) ) \eps - \beta_i(\zeta^{-1}(s)) \frac{2}{1+s_1^2} \right) ds \end{align*}
where by definition of $\beta_\eps$,
\begin{align*}
& \left\vert \eta_{\eps^{\frac{1}{4}}}\left( \eps s  \right)  \beta_\eps( \theta_i^{-1}( \eps  s  ) ) \eps - \beta_i(\zeta^{-1}(s)) \frac{2}{1+s_1^2} \right\vert \\
\leq & \left\vert ( \eta_{\eps^{\frac{1}{4}}}\left( \eps s  \right) -1 )\beta_i(\zeta^{-1}(s)) \frac{2}{1+s_1^2} \right\vert + \left\vert \eta_{\eps^{\frac{1}{4}}}\left( \eps s  \right) \beta_0( \theta_i^{-1}(\eps s) )  e^{u_i(\eps s )} \eps \right\vert \\
\leq & C \left( \frac{\mathbf{1}_{\vert s \vert \geq \eps^{- \frac{1}{2}}}}{1+s_1^2} + \eps \mathbf{1}_{\vert s \vert \leq \eps^{-\frac{3}{4}}} \right)
\end{align*}
implies that
$$  \int_{\mathbb{R}\times \{0\}} \psi_i^\eps( \zeta^{-1}(s))^2 \beta_\eps( \theta_i^{-1}( \eps  s  ) ) \eps ds = \int_{\mathbb{S}^1} \left(T_\eps\right)_{\vert D_i}^2 \beta_i dL_{\xi} + O(\eps^{\frac{1}{4}}) $$
as $\eps \to 0$. The previous estimates lead to
\begin{equation} \label{eq:eql2psieps} \int_{\partial \Sigma}  \psi_\eps ^2 \beta_\eps dL_g = \int_{\partial \Sigma} \left(T_\eps\right)_{\vert \Sigma}^2 \beta_0 dL_g + \sum_{i=1}^k \int_{\mathbb{S}^1} \left(T_\eps\right)_{\vert D_i}^2 \beta_i dL_{\xi} +  O\left( \eps^{\frac{1}{8}}\right)\end{equation}
as $\eps \to 0$. Now, gathering \eqref{eq:eqtestpsieps} \eqref{eq:eqgradpsieps} and \eqref{eq:eql2psieps}, we obtain 
\begin{equation} \label{eq:eqtestTeps} \sigma_m(\Sigma,g, \beta_\eps) \leq \frac{\int_{\tilde{\Sigma}} \left\vert \nabla T_\eps \right\vert_{\tilde{g}}^2 dA_{\tilde{g}} + O\left( \left(\ln \frac{1}{\eps}\right)^{-1}\right)}{\int_{\partial \tilde{\Sigma}} T_\eps^2 \tilde{\beta} dL_{\tilde{g}} +
 O(\eps^{\frac{1}{8}}) }  \end{equation}
 where 
 \begin{align*}
\frac{\int_{\tilde{\Sigma}} \left\vert \nabla T_\eps \right\vert_{\tilde{g}}^2 dA_{\tilde{g}} }{\int_{\partial \tilde{\Sigma}} T_\eps^2 \tilde{\beta} dL_{\tilde{g}} } = 
 \frac{\sum_{j=0}^m \left(a_j^\eps\right)^2 \int_{\tilde{\Sigma}} \vert \nabla \varphi_j \vert^2_g dA_g }{\sum_{j=0}^m \left(a_j^\eps\right)^2 \int_{\partial \tilde{\Sigma}} \left(\varphi_j\right)^2 \tilde{\beta} dL_{\tilde{g}} } = \sum_{j=0}^m \left(a_j^\eps\right)^2 \sigma_j(\tilde{\Sigma}, \tilde{g},\tilde{\beta}) \leq \sigma_m(\tilde{\Sigma}, \tilde{g},\tilde{\beta}).
 \end{align*}
 Finally, using \eqref{eq:eqtestTeps}, the previous inequality and that
 $$ \int_{\Sigma} \beta_\eps dL_g = \int_{\tilde{\Sigma}} \tilde{\beta} dL_{\tilde{g}} + O(\eps^{\frac{1}{8}}) $$
as $\eps\to 0$, we obtain the Claim. 
\end{proof}

We now denote for $j \in \mathbb{N}^*$, $\sigma_j^\eps = \sigma_j(\Sigma,g,\beta_\eps)$ and $\varphi_j^\eps$ functions such that
\begin{equation} \label{eq:varphijeps}
\begin{cases}
\Delta_g \varphi_j^\eps = 0  \text{ in } \Sigma \\
\partial_\nu \varphi_j^\eps = \sigma_j^\eps \beta_\eps \varphi_j^\eps \text{ on } \partial \Sigma,
\end{cases}
\end{equation}
and we assume for $j,j' \in \mathbb{N}^*$ that
\begin{equation}\label{eq:orthonormalbasis}\int_{\partial \Sigma} \varphi_j^\eps \varphi_{j'}^\eps \beta_\eps dL_g = \delta_{j,j'}.\end{equation}

For $1 \leq i \leq k$, and $s \in \mathbb{R}^2_+$ and $\eps$ small enough, we denote by 
$$ \left(\varphi_j^\eps\right)_i(s) = \varphi_j^\eps\left( \theta_i^{-1}( \eps s)  \right) $$
and for $s \in \mathbb{R} \times \{0\}$,
$$ \left(\beta_\eps\right)_i(s) = \frac{2 \beta_i(\zeta^{-1}(s))}{1+s_1^2} + \beta_0\left( \theta_i^{-1}( \eps s)  \right)  e^{u_i(\eps s )} \eps  $$
and we obtain the equations for any $R>0$ and $\eps$ small enough:
\begin{equation} \label{eq:varphijepsi}
\begin{cases}
\Delta \left(\varphi_j^\eps\right)_i = 0  \text{ in } \mathbb{D}_R^+  \\
\partial_r \left(\varphi_j^\eps\right)_i = \sigma_j^\eps (\beta_\eps)_i \left(\varphi_j^\eps\right)_i \text{ on } [-R,R] \times \{0\}.
\end{cases}
\end{equation}
Notice that for any $R>0$, $(\beta_\eps)_i$ is uniformly bounded and uniformly lower bounded in $[-R,R]\times \{0\}$ as $\eps \to 0$ because
\begin{equation} \label{eq:convbetaepsi} (\beta_\eps)_i \to \frac{2 \beta_i\circ \zeta^{-1}}{1+s_1^2} \text{ in } \mathcal{C}^2([-R,R]\times \{0\})\end{equation}
as $\eps \to 0$. In particular,
$$ \int_{[-R,R]\times \{0\}} ((\varphi_j^\eps)_i)^2 ds \leq C\int_{[-R,R]\times \{0\}} ((\varphi_j^\eps)_i)^2 (\beta_\eps)_i ds \leq \int_{\partial \Sigma} (\varphi_j^\eps)^2 \beta_\eps dL_g = 1. $$
Using this inequality, that $(\sigma_\eps^j)$ is bounded as $\eps \to 0$ (see Claim \ref{cl:boundedsigma}) and that $(\beta_\eps)_i$ is uniformly bounded in $[-R,R]\times \{0\}$ as $\eps \to 0$, 
standard elliptic theory on the equation \eqref{eq:varphijepsi} implies up to the extraction of a subsequence, the existence of $(\varphi_j)_i : \mathbb{R}^2_+ \to \R$  such that for any $R >0$
\begin{equation}\label{eq:convvarphiepsi} (\varphi_j^\eps)_i \to (\varphi_j)_i \text{ in } \mathcal{C}^2(\mathbb{D}_R^+) \end{equation}
as $\eps \to 0$. Passing to the limit as $\eps\to 0$ in the equation \eqref{eq:varphijepsi} implies
\begin{equation} \label{eq:varphijepsilim}
\begin{cases}
\Delta_\xi \left(\varphi_j\right)_i = 0  \text{ in } \mathbb{R}^2_+  \\
-\partial_{s_2} \left(\varphi_j\right)_i = \sigma_j \frac{2\beta_i \circ \zeta^{-1}}{1+s_1^2} \left(\varphi_j\right)_i \text{ on } \mathbb{R} \times \{0\},
\end{cases}
\end{equation}
where up to the extraction of a subsequence, we let $\sigma_j^\eps \to \sigma_j$ as $\eps \to 0$. Notice that by  Claim \ref{cl:boundedsigma}, $\sigma_j \leq \sigma_j(\tilde{\Sigma},\tilde{g},\tilde{\beta})$. Applying the biholomorphism $\zeta$, we obtain from \eqref{eq:varphijepsilim} an equation on $\left(\varphi_j\right)_i \circ \zeta$ that holds in $\mathbb{D} \setminus \{-1\}$. However, knowing that $\phi_j^i = \left(\varphi_j\right)_i \circ \zeta$ is bounded in $H^1( \mathbb{D} )$, the equation can be extended in $\mathbb{D}$ and we obtain:
\begin{equation} \label{eq:varphijepsilimdisk}
\begin{cases}
\Delta_\xi \phi_j^i = 0  \text{ in } \mathbb{D}  \\
\partial_r \phi_j^i = \sigma_j \beta_i  \phi_j^i \text{ on } \mathbb{S}^1,
\end{cases}
\end{equation}
so that $\phi_j^i$ is an eigenfunction for $(\mathbb{D},\xi,\beta_i)$.

Similarly, setting 
$$  \Sigma_R = \Sigma \setminus \bigsqcup_{i=1}^k \theta_i^{-1}( \mathbb{D}_{R^{-1}}^+ ) \text{ and }  I_R = \partial \Sigma \setminus \bigsqcup_{i=1}^k \theta_i^{-1}( [-R^{-1},R^{-1}]\times \{0\} ), $$
we have that for any $R>0$, $(\beta_\eps)$ is uniformly bounded in $I_R$ as $\eps \to 0$ because
\begin{equation} \label{eq:convbetaeps} \beta_\eps \to \beta_0 \text{ in } \mathcal{C}^2\left(I_R \right)\end{equation}
as $\eps \to 0$. In particular, if $\beta_0 \neq 0$,
$$ \int_{I_R} (\varphi_j^\eps)^2 ds \leq C\int_{I_R} (\varphi_j^\eps)^2 \beta_\eps dL_g \leq \int_{\partial \Sigma} (\varphi_j^\eps)^2 \beta_\eps dL_g = 1. $$
Assuming that $\beta_0 \neq 0$, using this inequality, that $(\sigma_\eps^j)$ is bounded as $\eps \to 0$ (see Claim \ref{cl:boundedsigma}) and that $(\beta_\eps)$ is uniformly bounded in $I_R$ as $\eps \to 0$, standard elliptic theory on the equation \eqref{eq:varphijeps} implies up to the extraction of a subsequence, the existence of $\varphi_j : \Sigma \setminus \{x_1,\cdots,x_k\} \to \R$  such that for any $R >0$
\begin{equation} \label{eq:convvarphieps} \varphi_j^\eps \to \varphi_j \text{ in } \mathcal{C}^2(\Sigma_R) \end{equation}
as $\eps \to 0$. Passing to the limit as $\eps\to 0$ in the equation \eqref{eq:varphijeps} implies
\begin{equation} \label{eq:varphijepslim}
\begin{cases}
\Delta_g \varphi_j = 0  \text{ in } \Sigma \setminus \{x_1,\cdots, x_k\}  \\
\partial_\nu \varphi_j = \sigma_j \beta_0 \varphi_j\text{ on } \partial\Sigma \setminus \{x_1,\cdots, x_k\}.
\end{cases}
\end{equation}
Again, since $\varphi_j$ is bounded in $H^1(\Sigma)$, we can extend the equation \eqref{eq:varphijepslim} to $\Sigma$ and we obtain a function $\phi_j^0 = \varphi_j $
\begin{equation} \label{eq:varphijepslimsigma}
\begin{cases}
\Delta_g \phi_j^0 = 0  \text{ in } \Sigma   \\
\partial_\nu \phi_j^0 = \sigma_j \beta_0 \phi_j^0 \text{ on } \partial\Sigma.
\end{cases}
\end{equation}

We prove now that \eqref{eq:orthonormalbasis} passes to the limit as $\eps \to 0$:
\begin{cl} \label{cl:orthonormal}
We have for $j,j' \in \mathbb{N}$:
$$  \int_{\partial \Sigma} \phi_j^0 \phi_{j'}^0 \beta_0 dL_g + \sum_{i=1}^k \int_{\mathbb{S}^1} \phi_j^i \phi_{j'}^i \beta_i d\theta = \delta_{j,{j'}},$$
where by convention the first term is $0$ if $\beta_0 =0$.
\end{cl}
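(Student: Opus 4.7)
The plan is to split the orthonormality integral along the decomposition $\beta_\eps = \beta_0 + \sum_{i=1}^k \beta_\eps^{(i)}$, where $\beta_\eps^{(i)}$ is the $i$-th concentrated bump in the definition of $\beta_\eps$ (supported in $\Omega_i$), and to treat the $\beta_0$-piece and the $k$ concentrated pieces separately.

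For the $\beta_0$-piece, the identity $\|\nabla \varphi_j^\eps\|_{L^2(\Sigma, g)}^2 = \sigma_j^\eps$ combined with Claim~\ref{cl:boundedsigma} yields a uniform $H^1(\Sigma, g)$ bound on $\varphi_j^\eps$. The compact trace embedding $H^1(\Sigma) \hookrightarrow L^2(\partial\Sigma, dL_g)$ then upgrades the weak $H^1$ limit $\varphi_j^\eps \rightharpoonup \phi_j^0$ to strong $L^2$ convergence on $\partial\Sigma$, so that $\int \varphi_j^\eps \varphi_{j'}^\eps \beta_0 \, dL_g \to \int \phi_j^0 \phi_{j'}^0 \beta_0 \, dL_g$ (which is zero if $\beta_0 = 0$).

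For each concentrated piece, I would rescale $s = \theta_i(x)/\eps$ on $\Omega_i \cap \partial\Sigma$ and then pull back through $\zeta$ to $\mathbb{S}^1$. Using the boundary identity $\beta_\eps^{(i)}\, e^{u_i(\theta_i(x))}\, \eps = \eta(\eps s)\tfrac{2\, \beta_i \circ \zeta^{-1}}{1+s_1^2}$ together with $\tfrac{2}{1+s_1^2}\, ds_1 = d\theta$, one obtains
$$\int_{\partial\Sigma} \varphi_j^\eps \varphi_{j'}^\eps \beta_\eps^{(i)} \, dL_g = \int_{\zeta^{-1}([-\delta/\eps,\delta/\eps]\times\{0\})} \bigl((\varphi_j^\eps)_i \circ \zeta\bigr)\bigl((\varphi_{j'}^\eps)_i \circ \zeta\bigr)\, \eta(\eps \zeta)\, \beta_i\, d\theta,$$
an integral over $\mathbb{S}^1$ minus a shrinking neighborhood of $-1$. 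Conformal invariance of the Dirichlet energy in dimension two yields a uniform $H^1$ bound for $(\varphi_j^\eps)_i \circ \zeta$ on $\mathbb{D}$ minus that shrinking neighborhood, while \eqref{eq:convvarphiepsi} provides $C^2_{\mathrm{loc}}$ convergence to $\phi_j^i$ on $\mathbb{S}^1 \setminus \{-1\}$.

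To conclude, I would fix a small $\rho>0$ and split this $\mathbb{S}^1$-integral at a neighborhood $N_\rho$ of $-1$. On $\mathbb{S}^1 \setminus N_\rho$, $C^2$ convergence together with $\eta(\eps\zeta) \to 1$ forces convergence to $\int_{\mathbb{S}^1 \setminus N_\rho} \phi_j^i \phi_{j'}^i \beta_i\, d\theta$. The main obstacle is a tail estimate on $N_\rho$, uniform in $\eps$ and small in $\rho$. I would establish it by dyadically decomposing $N_\rho$ into annular arcs $N_{\rho/2^k} \setminus N_{\rho/2^{k+1}}$ (down to scale $\eps$), applying on each fixed-aspect-ratio shell the uniform $H^1$ bound together with the 1D Sobolev embedding $H^{1/2} \hookrightarrow L^p$ for some $p>2$ to obtain a uniform trace bound, and then using Hölder to bound each arc contribution by $C (\rho/2^k)^{1-2/p}$. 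Summing the resulting geometric series gives a tail of order $\rho^{1-2/p}$, uniform in $\eps$. Letting $\eps \to 0$ then $\rho \to 0$ gives the convergence of each concentrated piece, and summing over $i$ completes the proof.
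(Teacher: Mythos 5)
Your decomposition of the integral into a $\beta_0$-piece and $k$ concentrated pieces is the same starting point as the paper, and your treatment of the $\beta_0$-piece via the uniform $H^1(\Sigma)$ bound and compact trace embedding is fine when $\beta_0>0$ (and trivial when $\beta_0=0$). The trouble is in the tail estimate for the concentrated pieces, where there is a genuine gap.

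On a dyadic shell $A_k$ of fixed aspect ratio near $-1$, a uniform Dirichlet-energy bound does \emph{not} give a uniform $H^{1/2}$ (hence $L^p$) bound on the trace: the constant mode on $A_k$ has zero Dirichlet energy but contributes freely to the $L^p$-norm of the trace on the arc $I_k = A_k\cap\mathbb{S}^1$. Equivalently, the $H^{1/2}\hookrightarrow L^p$ embedding you invoke requires control of the full $H^1(A_k)$-norm after rescaling, i.e.\ of the mean of $(\varphi_j^\eps)_i\circ\zeta$ on $A_k$, and nothing in your argument bounds these means. The normalization $\int_{\partial\Sigma}(\varphi_j^\eps)^2\beta_\eps\,dL_g=1$ only gives a summed $L^2$ bound $\sum_k\int_{I_k}u_\eps^2\,d\theta\le C$, and that does not preclude all the mass from concentrating at $-1$ as $\eps\to 0$ --- which is precisely the scenario the claim must rule out. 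So the summed arc contribution does not a priori tend to $0$ with $\rho$ uniformly in $\eps$, and the dyadic $H^{1/2}\hookrightarrow L^p$+H\"older chain does not close.

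This is exactly the point where the paper's proof does more work: Steps 1 and 2 establish a quantitative growth bound on the mean values $\bar f_\eps(r)$ over half-circles at dyadic scales (of order $\sqrt{\ln(1/r)}$), obtained by integrating $\partial_r\bar f_\eps$ against $\partial_r(\ln r)$ and Cauchy--Schwarz against the Dirichlet energy, with the base case anchored at a fixed scale where $\mathcal{C}^2_{\mathrm{loc}}$ convergence applies. Combined with rescaled elliptic estimates on each shell \emph{using the boundary equation with bounded coefficient $V_\eps$}, this yields the pointwise bounds $\vert(\varphi_j^\eps)_i\vert\le C\sqrt{\ln(2+\vert z\vert)}$ and $\vert\varphi_j^\eps\vert\le C\vert\ln\vert z^i\vert\vert$, which then integrate to a tail of size $O(\rho\ln(1/\rho))$. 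To repair your argument you would need an analogous bound on the shell means; the purely trace-theoretic route you propose cannot produce it.
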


\begin{proof} We first show that it suffices to prove that no mass accumulates in the neck part:
\begin{equation} \label{eq:nomassneck} \lim_{R \to +\infty} \limsup_{\eps\to 0} \int_{ N_{R,\eps} }   (\varphi_j^\eps)^2 \beta_\eps dL_g = 0. \end{equation}
where we denote
$$ N_{R,\eps} = \bigsqcup_{i=1}^k N_{R,\eps}^i \text{ and } N_{R,\eps}^i = \theta_i^{-1}( \left([-R^{-1},R^{-1}] \setminus [-R\eps^{-1},R\eps^{-1}]\right) \times \{0\}  ). $$
and in addition, we have that for any $R>0$,
\begin{equation} \label{eq:nomassthinpart} \beta_0 = 0 \Rightarrow \limsup_{\eps \to 0} \int_{I_R} (\varphi_j^\eps)^2 \beta_\eps dL_g \to 0  \end{equation}
as $\eps \to 0$. Indeed, we have by \eqref{eq:convbetaeps} and \eqref{eq:convvarphieps} that
$$ \beta_0 \neq 0 \Rightarrow \int_{I_R} \varphi_j^\eps \varphi_{j'}^\eps \beta_\eps dL_g \to \int_{I_R} \phi_j^0 \phi_{j'}^0  \beta_0 dL_g $$
as $\eps \to 0$ and by \eqref{eq:convbetaepsi} and \eqref{eq:convvarphiepsi} that
\begin{equation*}\begin{split}& \int_{\partial \Sigma \setminus I_{\eps R^{-1}}} \varphi_j^\eps \varphi_{j'}^\eps \beta_\eps dL_g = \sum_{i=1}^k \int_{[-R,R]\times \{0\}} \left(\varphi_j^\eps\right)_i \left(\varphi_{j'}^\eps\right)_i \left(\beta_\eps\right)_i d\theta \\ & \to  \sum_{i=1}^k \int_{[-R,R]\times \{0\}} \left(\varphi_j \right)_i \left(\varphi_{j'} \right)_i \frac{2\beta_i \circ \zeta^{-1}}{1+s_1^2} ds = \int_{\zeta^{-1}([-R,R]\times \{0\})} \phi_j^i \phi_{j'}^i \beta_i d\theta\end{split} \end{equation*}
as $\eps \to 0$. Therefore, if \eqref{eq:nomassneck} and \eqref{eq:nomassthinpart} hold, using all the previous convergences, using \eqref{eq:orthonormalbasis}, and passing to the limit as $R \to +\infty$ leads to
$$ \left\vert \int_{\partial \Sigma} \phi_j^0 \phi_{j'}^0 \beta_0 dL_g + \sum_{i=1}^k \int_{\mathbb{S}^1} \phi_j^i \phi_{j'}^i \beta_i d\theta - \delta_{j,{j'}}  \right\vert \leq \lim_{R\to +\infty} \limsup_{\eps \to 0}\int_{N_{R,\eps}}  (\varphi_j^\eps)^2 \beta_\eps dL_g =0 $$
if $\beta_0 \neq 0$ and
$$ \left\vert \sum_{i=1}^k \int_{\mathbb{S}^1} \phi_j^i \phi_{j'}^i \beta_i d\theta - \delta_{j,{j'}}  \right\vert \leq \lim_{R\to +\infty} \limsup_{\eps \to 0}\int_{N_{R,\eps} \cup I_R}  (\varphi_j^\eps)^2 \beta_\eps dL_g = 0 $$
if $\beta_0 = 0$. 

It remains to prove \eqref{eq:nomassneck} and \eqref{eq:nomassthinpart}. We will need several steps:

\medskip

\noindent\textbf{Step 1:} There is a constant $C>0$ such that
\begin{equation} \label{eq:lnestneck} \forall z \in \mathbb{D}^+_{\frac{\delta}{\eps}}, \vert (\varphi_j^\eps)_i(z) \vert \leq C  \sqrt{\ln\left( 2+ \vert z \vert\right)}.\end{equation}
Moreover, we have that
\begin{equation} \label{eq:coarseuniformbound} \forall x \in \Sigma, \vert \varphi_\eps^j(x) \vert \leq C \sqrt{\ln \frac{1}{\eps}} . \end{equation}

\medskip

\noindent\textbf{Proof of Step 1:} We first prove \eqref{eq:lnestneck}. We denote $ f_\eps = (\varphi_j^\eps)_i$. Let $1 \leq R_\eps \leq \delta \eps^{-1}$. The following computation on mean values $\bar{f}_\eps(r) = \frac{1}{\pi} \int_{0}^{\pi} f_\eps(r e^{i\theta})d\theta $ leads to
\begin{align*} &  \bar{ f}_\eps(R_\eps) -\bar{f}_\eps(1) = \frac{1}{\pi} \int_{1}^{R_\eps} \int_{0}^{\pi} \partial_r f_\eps(re^{i\theta})d\theta 
= \frac{1}{\pi} \int_{1}^{R_\eps} \int_0^{\pi} \partial_r (\ln r)\partial_r f_\eps(re^{i\theta}) rdrd\theta 
\\&  = \frac{1}{\pi} \int_{\mathbb{D}_{R_\eps}^+ \setminus \mathbb{D}^+} \langle\nabla \ln \vert x \vert \nabla f_\eps \rangle_\xi dA_\xi \leq \frac{1}{\sqrt{\pi}} \left(\int_1^{R_\eps} \frac{dr}{r}\right)^{\frac{1}{2}} \left(\int_{\Sigma} \vert \nabla \varphi_j^\eps \vert_g^2 dA_g\right)^{\frac{1}{2}} \leq \frac{ \sqrt{\ln R_\eps}}{\sqrt{\pi}} \sqrt{ \sigma_j^\eps}.  \end{align*}
Now, for  $1 \leq r \leq  \frac{\delta}{4} \eps^{-1}$, we set $\tilde{f}_\eps(x) = f_\eps(r x) - \bar{f}_\eps(r)$ and it satisfies the system of equation:
\begin{equation} \label{eq:sysE4}
\begin{cases}
\Delta \tilde{f}_\eps = 0 \text{ in } E_4  \\
- \partial_{2} \tilde{f}_\eps(s) = \sigma_\eps^j V_\eps(s) \tilde{f}_\eps(s) + \sigma_\eps^j  V_\eps(s)  \bar{f}_\eps(r) \text{ for } s \in E_4 \cap \mathbb{R}^2 \times \{0\}.
\end{cases}
\end{equation}
where 
$$ E_\rho = \begin{cases} \mathbb{D}_\rho \setminus \mathbb{D}_{\frac{1}{\rho}} \text{ if } r \geq 2 \\
\mathbb{D}_\rho \text{ if } r \leq 2, \end{cases} $$
and
$$ V_\eps(s) = r(\beta_\eps)_i(r s ) = \frac{2r}{1+r^2 s_1^2} + \eps r \beta_0(\theta_i^{-1}(\eps r s) )e^{u_i(\eps r s)} $$
is uniformly bounded by a positive constant independent of $\eps$ and $r$. By standard elliptic theory, we obtain that
$$ \Vert \tilde{f}_\eps \Vert_{L^\infty(E_2)} \leq C \sigma_\eps^j \bar{f}_\eps(r). $$
By \eqref{eq:convvarphiepsi}, $\bar{f}_\eps(1)$ is uniformly bounded. Then, gathering all the previous estimates, we obtain \eqref{eq:lnestneck}.

Now, we prove \eqref{eq:coarseuniformbound}. From \eqref{eq:lnestneck}, we deduce that \eqref{eq:coarseuniformbound} holds true in $ \Sigma \setminus \Sigma_{2 \delta^{-1}}$. It is also clear from \eqref{eq:convvarphieps} that if $\beta_0 \neq 0$, \eqref{eq:coarseuniformbound} holds true in $\Sigma_{4 \delta^{-1}}$. It remains to prove \eqref{eq:coarseuniformbound} in $\Sigma_{4 \delta^{-1}}$ if $\beta_0 = 0$. Now, we use the equation
$$ \begin{cases}
\Delta \varphi_\eps^j = 0 \\
\partial_\nu \varphi_\eps^j = \sigma_j^\eps \beta_\eps \varphi_\eps^j
\end{cases} $$
where we know that 
$$\Vert \beta_\eps \Vert_{L^{\infty}(\Sigma_{4 \delta^{-1}})} \leq O(\eps) $$
as $\eps \to 0$. Then, by standard elliptic theory, knowing that $\vert \varphi_\eps^j \vert \leq O(\sqrt{\ln \frac{1}{\eps}})$ as $\eps \to 0$, in $\Sigma_{4 \delta^{-1}} \setminus \Sigma_{2 \delta^{-1}}$, we obtain that 
$$\Vert \varphi_\eps^j \Vert_{L^{\infty}(\Sigma_{4 \delta^{-1}})} \leq C\sqrt{\ln \frac{1}{\eps}} + C \sigma_j^\eps \Vert \beta_\eps \Vert_{L^{\infty}(\Sigma_{4 \delta^{-1}})} \Vert \varphi_\eps^j \Vert_{L^{\infty}(\Sigma_{4 \delta^{-1}})} \leq C \sqrt{\ln \frac{1}{\eps}} + C \eps \Vert \varphi_\eps^j \Vert_{L^{\infty}(\Sigma_{4 \delta^{-1}})}  $$
and substracting the last term completes the proof ot \eqref{eq:coarseuniformbound}.

\medskip

\noindent \textbf{Step 2:} If $\beta_0 \neq 0$, then there is a constant $C>0$ such that for any $ R > \frac{2}{\delta}$,
\begin{equation} \label{eq:lnestneck2} \forall x \in N_{R,\eps}^i, \vert \varphi_\eps^j(x) \vert \leq C \vert \ln \vert z^i \vert \vert. \end{equation}

\medskip

\noindent\textbf{Proof of Step 2:} We proceed exactly as in the proof of \eqref{eq:lnestneck}. We denote $h_\eps = \varphi_\eps^j \circ \theta_i^{-1}$, and $\bar{h}_\eps = \frac{1}{\pi} \int_0^\pi h_\eps(re^{i\theta})d\theta$. We obtain for $0 < r_\eps \leq \delta$ that
\begin{align*} &  \bar{ h}_\eps(1) -\bar{h}_\eps(r_\eps) = \frac{1}{\pi} \int_{r_\eps}^{\delta} \int_{0}^{\pi} \partial_r h_\eps(re^{i\theta})d\theta 
= \frac{1}{\pi} \int_{r_\eps}^{\delta} \int_0^{\pi} \partial_r (\ln r)\partial_r f_\eps(re^{i\theta}) rdrd\theta 
\\&  = \frac{1}{\pi} \int_{\mathbb{D}_{\delta}^+ \setminus \mathbb{D}_{r_\eps}^+} \langle\nabla \ln \vert x \vert \nabla f_\eps \rangle_\xi dA_\xi \leq \frac{1}{\sqrt{\pi}} \left(\int_{r_\eps}^{\delta} \frac{dr}{r}\right)^{\frac{1}{2}} \left(\int_{\Sigma} \vert \nabla \varphi_j^\eps \vert_g^2 dA_g\right)^{\frac{1}{2}} \leq \frac{ \sqrt{\ln\frac{1}{r_\eps}}}{\sqrt{\pi}} \sqrt{ \sigma_j^\eps}.  \end{align*}
We notice that $f_\eps( x) = h_\eps(\eps x) $, we use again \eqref{eq:sysE4} to obtain
$$ \Vert \tilde{f}_\eps \Vert_{L^\infty(E_2)} \leq C \sigma_\eps^j \bar{f}_\eps(r) $$
and by \eqref{eq:convvarphieps}, $\bar{h}_\eps(\delta)$ is uniformly bounded. Then, gathering all the previous estimates, we obtain \eqref{eq:lnestneck2}.

\medskip

\noindent \textbf{Step 3:} We prove \eqref{eq:nomassneck} if $\beta_0 \neq 0$. We set 
$$ \tilde{N}_{R,\eps} = \left([-(R\eps)^{-1},(R\eps)^{-1}]\setminus [-R,R]\right) \times \{0\}$$
$$ \int_{N_{R,\eps}^i} (\varphi_j^\eps)^2 \beta_\eps dL_g = \int_{\tilde{N}_{R,\eps}} ((\varphi_j^\eps)_i)^2 \beta_i(\zeta^{-1}(s)) \frac{2}{1+s_1^2} ds  + \int_{N_{R,\eps}^i} (\varphi_j^\eps)^2 \beta_0 dL_g $$
and we have from \eqref{eq:lnestneck} that
$$ \limsup_{\eps \to 0} \int_{\tilde{N}_{R,\eps}} ((\varphi_j^\eps)_i)^2 \frac{2}{1+s_1^2} ds \leq \limsup_{\eps \to 0}  C  \int_R^{(R\eps)^{-1}} \frac{\ln(2+\vert u \vert)}{1+u^2} = C  \int_R^{+\infty} \frac{\ln(2+\vert u \vert)}{1+u^2}.$$
and from\eqref{eq:lnestneck2} that 
$$ \limsup_{\eps \to 0} \int_{N_{R,\eps}^i} (\varphi_j^\eps)^2 \beta_0 dL_g \leq  \limsup_{\eps \to 0} \int_{\eps R}^{R^{-1}} \vert \ln\left(\vert u \vert \right) \vert du = \int_{0}^{R^{-1}} \vert \ln(\vert u \vert) \vert du. $$
Letting $R\to +\infty$ in all the previous inequalities completes the proof of Step 3.

\medskip

\noindent \textbf{Step 4}: We assume now that $\beta_0 = 0$ and we prove \eqref{eq:nomassneck} and \eqref{eq:nomassthinpart}. As previously, with $\beta_0 = 0$, we easily check from
$$ \int_{N_{R,\eps}^i} (\varphi_j^\eps)^2 \beta_\eps dL_g = \int_{\tilde{N}_{R,\eps}} ((\varphi_j^\eps)_i)^2 \frac{2}{1+s_1^2} ds  $$
that Step 1 leads to:
$$ \lim_{R\to +\infty} \limsup_{\eps \to 0} \int_{\tilde{N}_{R,\eps}} ((\varphi_j^\eps)_i)^2 \frac{2}{1+s_1^2} ds = 0,$$
and we proved \eqref{eq:nomassneck}. Now, let's prove \eqref{eq:nomassthinpart}. By definition of $\beta_\eps$ and by \eqref{eq:coarseuniformbound},
$$ \left\vert \int_{I_R} (\varphi_j^\eps)^2 \beta_\eps dL_g \right\vert \leq C \sum_{i=1}^k \int_{\frac{1}{R}}^\delta \left(\ln\frac{1}{\eps}\right) \frac{2\eps }{\eps^2 + s^2} ds \to 0   $$
as $\eps\to 0$. Letting $R \to +\infty$ completes the proof of \eqref{eq:nomassthinpart}.

\end{proof}

Now we recall that $\sigma_j$ is (up to the extraction of a subsequence) the limit of $\sigma_j^\eps = \sigma_j(\Sigma,g,\beta_\eps)$ as $\eps\to 0$ and that by Claim \ref{cl:boundedsigma}, we have that $\sigma_j \leq \sigma_j(\tilde{\Sigma},\tilde{g},\tilde{\beta})$. It remains to prove the converse inequality to obtain Proposition \ref{prop:convmetrictest}. We let $\phi_j : \tilde{\Sigma} \to \R$ be the function defined as $(\phi_j)_{\vert \Sigma} = \phi_j^0$ in $\Sigma$ (if $\Sigma$ appears in the disjoint union of connected surfaces of $\tilde{\Sigma}$) and $(\phi_j)_{\vert D_i} = \phi_j^i$ for $1 \leq i \leq k$. Testing $\phi_0,\cdots,\phi_m$ in the variational characterization of $\sigma_m(\tilde{\Sigma},\tilde{g},\tilde{\beta})$ yields
$$ \sigma_m(\tilde{\Sigma},\tilde{g},\tilde{\beta}) \leq \max_{a \in \mathbb{S}^m} \frac{\int_{\tilde{\Sigma}} \vert \nabla \sum_{j} a_j \phi_j \vert_{\tilde{g}}^2 dA_{\tilde{g}}  }{\int_{\partial \tilde{\Sigma}} \left( \sum_{j} a_j \phi_j \right)^2 \tilde{\beta} dL_{\tilde{g}}  } \leq \sum_j a_j^2 \sigma_j \leq \sigma_m,$$
where we used \eqref{eq:varphijepsilimdisk}, \eqref{eq:varphijepslimsigma} and Claim \ref{cl:orthonormal}. The proof of Proposition \ref{prop:convmetrictest} is now complete.

\section{Proof of Proposition \ref{prop:construction>2pi}} \label{sec3}
The proof of Proposition \ref{prop:construction>2pi} is based on techniques used in \cite{PetridesVarMethod,PetridesNew}. We will refer to Propositions and lemmas of these papers all along the proof. Let's first define the family of surfaces $\Sigma_{l,\eps}^{\pm}$ we work with.
We denote the rectangle
$$R_{l,\eps} = \left[-\frac{\eps l}{2},\frac{\eps l}{2}\right]\times\left[-\frac{\eps}{2},\frac{\eps}{2}\right]$$
 and its boundary components
$$ I_{l,\eps} =  \left[-\frac{\eps l}{2},\frac{\eps l}{2}\right]\times \left\{-\frac{\eps}{2},\frac{\eps}{2}\right\} \text{ and }  J_{l,\eps} =  J_{l,\eps}^{+}\cup  J_{l,\eps}^{-} \text{ where }  J_{l,\eps} = \left\{ \pm \frac{l \eps}{2} \right\} \times \left[-\frac{\eps}{2},\frac{\eps}{2}\right]. $$ 
We glue $R_{l,\eps}$  to a disk $\mathbb{D}$ via the attaching map
$$ \theta_{\pm} : (\eta \frac{l\eps}{2}, t) \in J_{l,\eps}  \mapsto  \eta e^{ \pm i \eta t} $$
and we denote by 
\begin{equation}\label{def:sigmalepspm} \Sigma_{l,\eps}^{\pm} = (\mathbb{D} \sqcup R_{l,\eps})/\sim_{\theta_{\pm}} \end{equation}
the resulting surface endowed with the metric $g_\eps$ that equals to the flat metric in $R_{l,\eps}$ and the flat metric in $\mathbb{D}$. Notice that
$$ \partial \Sigma_{l,\eps}^\pm = (\mathbb{S}^1 \setminus J_{l,\eps}) \cup I_{l,\eps} $$
and that $\Sigma_{l,\eps}^{\pm}$ is homeomorphic to an annulus if $\theta_{\pm} = \theta_+$ and a Möbius band if $\theta_{\pm} = \theta_-$.

\begin{proof}[Proof of Proposition \ref{prop:construction>2pi}] We argue by contradiction. We assume that there is $l>0$ such that up to the extraction of a subsequence of $\eps \to 0$, $\sigma_1(\Sigma_{{l},\eps}^{\pm},[g_\eps] ) = 2\pi$. Notice that all along the proof, the subsequences of $\eps \to 0$ that are extracted are not written explicitely. For instance "as $\eps \to 0$" means "as the subsequence of $\eps \to 0$ we consider goes to $0$".

By \cite[Claim 3.2]{PetridesNew},
\begin{equation}\label{eq:convsigmadeltaepssquare} \bar{\sigma}_k(\Sigma_{{l},\eps}^{\pm},g_\eps  ) \geq \bar{\sigma}_k(\mathbb{D}) - (\delta_{\eps,l})^2 \end{equation}
where $\bar{\sigma}_k(\mathbb{D}) = 2\pi k$ and $\delta_{\eps,l} = c_l \eps^{\frac{1}{2}}\left(\ln \frac{1}{\eps}\right)^{\frac{1}{4}}$ for some constant $c_l >0$. Notice in particular that for a given $\eps >0$,
\begin{equation}\label{eq:convsigmadeltaepssquare1} \bar{\sigma}_1(\Sigma_{{l},\eps}^{\pm},g_\eps) \geq \sigma_1(\Sigma_{{l},\eps}^{\pm},[g_\eps]) - (\delta_{\eps,l})^2 \end{equation}
and we aim at appling Ekeland's variational principle to this estimate. We set
$$ \mathcal{A}_\eps = \{ \beta \in \overline{X} ; \beta(1,1) \geq 2\pi \},$$
where $\overline{X}$ is the closure of the set 
$$ \left\{ (\phi,\psi) \mapsto \int_{\partial \Sigma_{l,\eps}^\pm} \phi\psi e^{u} dL_{g_\eps}   ; u \in \mathcal{C}^\infty(\partial\Sigma) \right\} $$
in the Banach space of continuous bilinear forms on $H^1(\Sigma_{l,\eps}^\pm)$ denoted by $B(\Sigma_{l,\eps}^\pm)$. We endow $\mathcal{A}_\eps$ with the distance $d_{g_\eps}$ induced by the norm
$$ \Vert \beta \Vert_{g_\eps} = \sup_{\phi,\psi \in H^1(\Sigma_{l,\eps}^\pm)} \frac{\beta(\phi,\psi)}{\Vert \phi \Vert_{H^1,g_\eps}\Vert \psi \Vert_{H^1,g_\eps}} $$
on $B(\Sigma_{l,\eps}^\pm)$, where
$$ \Vert \phi \Vert_{H^1,g_\eps}^2 = \int_{\Sigma_{l,\eps}^\pm } \vert \nabla \phi \vert^2_{g_\eps} dA_{g_\eps} + \int_{\partial \Sigma_{l,\eps}^\pm } \vert \nabla \phi \vert^2_{g_\eps} dL_{g_\eps}.$$
Knowing \eqref{eq:convsigmadeltaepssquare1}, and upper semi-ontinuity of eigenvalues (see \cite[Proposition 1.1]{PetridesVarMethod}), Ekeland's variational principle on $(\mathcal{A}_\eps,d_{g_\eps})$ gives $\beta_\eps \in \mathcal{A}_\eps$ such that
\begin{equation} \label{eq:ekeland} \begin{cases} \bar{\sigma}_1(\Sigma_{l,\eps}^{\pm},g_\eps,\beta_\eps) \geq \bar{\sigma}_1(\Sigma_{l,\eps}^{\pm},g_\eps),\\
 d_{g_\eps}(\beta_\eps, dL_{g_\eps}) \leq \delta_{\eps,l}, \\
  \forall \beta \in \overline{X}, \bar{\sigma}_1(\Sigma_{l,\eps}^{\pm},g_\eps,\beta_\eps) - \bar{\sigma}_1(\Sigma_{l,\eps}^{\pm},g_\eps,\beta) \geq - \delta_{\eps,l} d_{g_\eps}(\beta_\eps, \beta). \end{cases}\end{equation}
Notice that
$$ \vert \beta_\eps(1,1) - L_{g_\eps}(\partial\Sigma_{l,\eps}^{\pm}) \vert \leq \delta_{l,\eps}$$
so that
\begin{equation} \label{eq:betaeps11} \beta_\eps(1,1) = 2\pi + O(\delta_{\eps,l}) \end{equation}
as $\eps \to 0$. Another consequence coming from \cite[Proposition 1.1]{PetridesNew} is that for any $k$,
\begin{equation}\label{eq:lip} \vert \sigma_k(\Sigma_{l,\eps}^\pm,g_\eps,\beta_\eps) - \sigma_k(\Sigma_{l,\eps}^\pm,g_\eps) \vert \leq O(\delta_{\eps,l}) \end{equation}
as $\eps \to 0$.

From \cite[Proposition 1.5]{PetridesVarMethod}, we obtain the existence of $\Phi_{\eps} : \Sigma_{l,\eps}^{\pm} \to \mathbb{R}^{n_\eps}$ such that denoting $\sigma_\eps = \sigma_1(\Sigma_{l,\eps}^\pm,g_\eps,\beta_\eps)$,
\begin{equation} \label{eq:eulerlagrange} \begin{cases} \Delta_{g_\eps} \Phi_\eps = \sigma_\eps \beta_\eps(\Phi_\eps,\cdot) \\ \beta_\eps(\Phi_\eps,\Phi_\eps) = \beta_\eps(1,1) \\
 \vert \Phi_\eps \vert^2 \geq 1 - \theta_\eps^2 \text{ on } \partial \Sigma_{l,\eps}^{\pm}
\text{ where } \Delta_{g_\eps} \theta_\eps = 0 \text{ and }
 \Vert \theta_\eps \Vert_{H^1,g_\eps}^2 \leq \delta_{l,\eps}.
 \end{cases} \end{equation}
We deduce from \eqref{eq:lip} that 
$$\sigma_\eps =  \sigma_1(\Sigma_{l,\eps}^\pm,g_\eps,\beta_\eps) =  \sigma_2(\Sigma_{l,\eps}^\pm,g_\eps,\beta_\eps) \to 1$$
as $\eps\to 0$ and more generally that
$$ \sigma_k(\Sigma_{l,\eps}^\pm,g_\eps,\beta_\eps) \to k $$
as $\eps \to 0$.  In particular, since the coordinates of $\Phi_\eps$ are first eigenfunctions, we obtain that $n_\eps = 2$ for $\eps$ small enough. We let $\omega_\eps$ be the harmonic extension in $\Sigma_{l,\eps}^{\pm}$ of
$$ \omega_\eps = \sqrt{\theta_\eps^2 + \vert \Phi_\eps \vert^2 } \text{ on } \partial \Sigma_{l,\eps}^{\pm} $$
and we define $\Psi_\eps : (\Sigma_{l,\eps}^\pm,\partial \Sigma_{l,\eps}^\pm)  \to (\mathbb{B}^3,\mathbb{S}^2)$ as
$$ \Psi_\eps = \frac{(\Phi_\eps,\theta_\eps)}{\omega_\eps} = (\widetilde{\Phi}_\eps,\widetilde{\theta}_\eps).$$
That $\Psi_\eps$ maps into $\mathbb{B}^3$ is a consequence of the maximum principle, since it maps in $\mathbb{S}^2$ on $\partial \Sigma_{l,\eps}^\pm$ and $\Phi_\eps$, $\theta_\eps$ and $\omega_\eps$ are harmonic functions. Indeed, we can use the elliptic equation 
$$  -div_{g_\eps}\left( \omega_\eps^2 \nabla \Psi_\eps \right) = \omega_\eps \Delta_{g_\eps}(\Phi_\eps,\theta_\eps) - (\Phi_\eps,\theta_\eps) \Delta_{g_\eps} \omega_\eps = 0$$
to deduce that for any $X \in \mathbb{S}^2$,
$$ -div_{g_\eps}\left( \omega_\eps^2 \nabla \langle \Psi_\eps, X\rangle \right) = 0. $$
and $\langle \Psi_\eps, X\rangle$ cannot realize its maximum in the interior of $ \Sigma_{l,\eps}^\pm$ for any $X \in \mathbb{S}^2$.

Now, we have by \cite[Claim 3.1]{PetridesVarMethod} that
\begin{equation} \label{eq:clomegacloseto1} \beta_\eps(\omega_\eps,\omega_\eps) - \beta_\eps(1,1)+ \int_{\Sigma_{l,\eps}^{\pm}} \left( \vert \nabla \omega_\eps \vert^2_{g_\eps} + \vert \nabla \left(\widetilde{\Phi}_\eps - \Phi_\eps \right) \vert^2_{g_\eps} + (\omega_\eps^2-1) \vert \nabla \widetilde{\Phi}_\eps \vert^2_{g_\eps} \right)dA_{g_\eps} \leq O(\delta_{\eps,l}) \end{equation}
as $\eps \to 0$. From now on, the proof is self-contained but widely inspired from \cite{PetridesNew}.

\medskip

\noindent \textbf{Step 1:} We set 
$$ B(\psi,\psi) = \sum_{i=1}^2 \int_{\mathbb{D}} \vert \nabla \psi_i \vert^2_\xi dA_\xi - \int_{\mathbb{S}^1} \left(\psi_i-\left(\frac{1}{2\pi} \int_{\mathbb{S}^1} \psi_i d\theta \right)\right)^2 d\theta. $$ 
and we prove
\begin{equation} \label{eq:estmeanvalue}
\frac{1}{2\pi}\left\vert \int_{\mathbb{S}^1} \widetilde{\Phi}_\eps d\theta \right\vert \leq O(\delta_{\eps,l})
\end{equation}
as $\eps\to 0$ and
\begin{equation} \label{eq:bilinearenergyboundary} B\left(\widetilde{\Phi}_\eps,\widetilde{\Phi}_\eps\right) + \int_{R_{l,\eps}^{\pm}} \vert \nabla \widetilde{\Phi}_\eps \vert^2_{\xi} dA_\xi \leq O(\delta_{\eps,l}) \end{equation}
as $\eps \to 0$. 

\medskip

\noindent \textbf{Proof of the Step 1:} We first prove  \eqref{eq:estmeanvalue}. Using that $\vert \widetilde{\Phi}_\eps \vert \leq 1$, we have that
\begin{align*} \left\vert \int_{\mathbb{S}^1} \widetilde{\Phi}_\eps d\theta \right\vert & \leq \left\vert \int_{I_{l,\eps}} \widetilde{\Phi}_\eps dL_{\xi} \right\vert +  \left\vert \int_{J_{l,\eps}} \widetilde{\Phi}_\eps dL_{\xi} \right\vert + \left\vert \beta_\eps(\widetilde{\Phi}_\eps,1)- \int_{\partial\Sigma_{l,\eps}} \widetilde{\Phi}_\eps dL_{g_\eps} \right\vert +  \vert \beta_\eps(\widetilde{\Phi}_\eps-\Phi_\eps,1) \vert \\
& \leq \eps l + 2 \eps + d_{g_\eps}( \beta_\eps , dL_{g_\eps}) \Vert_{g_\eps} \Vert \widetilde{\Phi}_\eps \Vert_{H^1,g_\eps} L_{g_\eps}(\partial\Sigma_{l,\eps} ) + \beta_\eps(\omega_\eps-1,1) . \end{align*}
Now, using  \eqref{eq:clomegacloseto1}, we have
\begin{equation}\label{eq:h1normphieps} \Vert \widetilde{\Phi}_\eps \Vert_{H^1,g_\eps}^2 = \int_{\partial \Sigma_{l,\eps}^\pm} \vert\widetilde{\Phi}_\eps\vert^2 dL_{g_\eps} + \int_{\Sigma_{l,\eps}^\pm}\vert \nabla \widetilde{\Phi}_\eps \vert_{g_\eps}^2 dA_{g_\eps} \leq L_{g_\eps}(\Sigma_{l,\eps}^\pm ) + \sigma_\eps + O(\delta_{l,\eps})\end{equation}
as $\eps \to 0$. Together with the second line in \eqref{eq:ekeland}, and another use of \eqref{eq:clomegacloseto1} with 
$$\beta_\eps(\omega_\eps-1,1) \leq \beta_\eps(\omega_\eps,\omega_\eps) - \beta_\eps(1,1),$$
we obtain \eqref{eq:estmeanvalue}. We now prove \eqref{eq:bilinearenergyboundary}:
\begin{align*} & B\left(\widetilde{\Phi}_\eps,\widetilde{\Phi}_\eps\right) +  \int_{R_{l,\eps}^{\pm}} \vert \nabla \widetilde{\Phi}_\eps \vert^2_{\xi} dA_\xi \leq \int_{\Sigma_{l,\eps}^{\pm}} \left(\vert \nabla \widetilde{\Phi}_\eps \vert^2_{g_\eps} - \vert \nabla \Phi_\eps \vert^2_{g_\eps}\right) dA_{g_\eps} + (\sigma_\eps \beta_\eps(1,1) - 2\pi) \\
&  + \left(2\pi - \beta_\eps(1,1)\right) +   \left(\beta_\eps(\Phi_\eps,\Phi_\eps)- \beta_\eps\left(\widetilde{\Phi}_\eps,\widetilde{\Phi}_\eps\right)\right)  + \left( \beta_\eps\left(\widetilde{\Phi}_\eps,\widetilde{\Phi}_\eps\right)- \int_{\partial \Sigma_{l,\eps}^{\pm}} \left\vert \widetilde{\Phi}_\eps \right\vert^2 dL_{g_\eps} \right) \\ & + \int_{I_{l,\eps}}  \left\vert \widetilde{\Phi}_\eps \right\vert^2 dL_{\xi}  + \frac{1}{2\pi} \left\vert \int_{\mathbb{S}^1} \tilde{\Phi}_\eps d\theta \right\vert^2 = I+II+III+IV+V+VI+VII. \end{align*}
In order to estimate $I$ we will use \eqref{eq:clomegacloseto1} together with the computation:
$$ \vert \nabla \Phi_\eps \vert^2_{g_\eps} = \omega_\eps^2  \vert \nabla \widetilde{\Phi}_\eps \vert^2_{g_\eps} + \vert \widetilde{\Phi}_\eps \vert^2  \vert \nabla \omega_\eps \vert^2_{g_\eps} + \omega_\eps \langle \nabla \omega_\eps \nabla  \vert \widetilde{\Phi}_\eps \vert^2 \rangle_{g_\eps}   $$
that implies using $\tilde{\theta}_\eps^2 = - \vert \widetilde{\Phi}_\eps \vert^2$ and $\omega_\eps \geq 1$:
$$ \vert \nabla \widetilde{\Phi}_\eps \vert^2_{g_\eps} -  \vert \nabla \Phi_\eps \vert^2_{g_\eps} = (1-\omega_\eps^2) \vert \nabla \widetilde{\Phi}_\eps \vert^2_{g_\eps} - \vert \widetilde{\Phi}_\eps \vert^2  \vert \nabla \omega_\eps \vert^2_{g_\eps} + 2 \theta_\eps \langle\nabla \omega_\eps \nabla \frac{\theta_\eps}{\omega_\eps}\rangle_{g_\eps} \leq 2 \frac{\theta_\eps}{\omega_\eps} \langle\nabla \omega_\eps \nabla \theta_\eps \rangle_{g_\eps}$$
to obtain
$$ I \leq 2 \left(\int_{\Sigma_{l,\eps}^\pm} \vert \nabla \omega_\eps \vert_{g_\eps}^2 dA_{g_\eps}\right)^{\frac{1}{2}} \left(\int_{\Sigma_{l,\eps}^\pm} \vert \nabla \theta_\eps \vert_{g_\eps}^2 dA_{g_\eps}\right)^{\frac{1}{2}} = O(\delta_{l,\eps}) $$
as $\eps \to 0$. The assumption $\sigma_\eps \beta_\eps(1,1) \leq 2\pi$ and that $\beta_\eps(1,1)\geq 2\pi$ since $\beta_\eps \in \mathcal{A}$ give $II+III \leq 0$. 
Notice now that $\beta_\eps$ acts as a linear from on squares of $H^1$ functions, we have from $\vert \widetilde{\Phi}_\eps \vert\leq 1$, $\omega_\eps^2 \geq 1$ and \eqref{eq:clomegacloseto1}, that
$$ IV \leq \beta_\eps( \omega_\eps,\omega_\eps ) - \beta_\eps(1,1) \leq O(\delta_{l,\eps}) $$
as $\eps \to 0$.
The second line in \eqref{eq:ekeland}, and \eqref{eq:h1normphieps} imply
$$ V \leq O(\delta_{l,\eps}) $$
as $\eps \to 0$. That $\vert \widetilde{\Phi}_\eps \vert \leq 1$ implies $VI \leq 2l\eps$ and the use of \eqref{eq:estmeanvalue} to estimate $VII$ completes the proof of \eqref{eq:bilinearenergyboundary}.

\medskip

\noindent \textbf{Step 2:}
We now set on $\mathbb{S}^1$
$$ \widetilde{\Phi}_\eps = F_\eps + \int_{\mathbb{S}^1} \widetilde{\Phi}_\eps \frac{d\theta}{2\pi} + R_\eps  $$
where $F_\eps$ is the orthonormal projection of $\widetilde{\Phi}_\eps$ on $Span\{\cos \theta,\sin \theta\}$ with respect to the $L^2$ norm of $\mathbb{S}^1$ and we still denote $R_\eps$ its harmonic extension in $\mathbb{D}$. Then:
\begin{equation} \label{eq:repsto0}  \Vert R_\eps \Vert_{H^1(\mathbb{D})}^2 \leq O(\delta_{\eps,l}) \end{equation}
as $\eps \to 0$ and
\begin{equation} \label{eq:massrepsthin} \left\vert \frac{1}{\eps} \int_{J_{l,\eps}^+} R_\eps \right\vert + \left\vert \frac{1}{\eps} \int_{J_{l,\eps}^-} R_\eps \right\vert \leq O\left( \sqrt{\ln \frac{1}{\eps}}  \Vert R_\eps \Vert_{H^1(\mathbb{D})}\right) \end{equation}
as $\eps \to 0$.

\medskip

\noindent \textbf{Proof of Step 2:} We first prove \eqref{eq:repsto0}.
$$ B(R_\eps,R_\eps) = B(R_\eps,\widetilde{\Phi}_\eps) \leq \sqrt{B(\widetilde{\Phi}_\eps,\widetilde{\Phi}_\eps)B(R_\eps,R_\eps)}. $$
We deduce from \eqref{eq:bilinearenergyboundary} that
$$ B(R_\eps,R_\eps)\leq O(\delta_{l,\eps}) $$
as $\eps \to 0$. We denote by $E_\sigma(\mathbb{D},\xi)$ the set of Steklov eigenfunctions associated to the eigenvalue $\sigma$. Using the latter estimate and that $R_\eps \in \bigoplus_{\sigma \geq 2} E_\sigma(\mathbb{D},\xi)$.
$$ \int_\mathbb{D} \vert \nabla R_\eps\vert^2_{\xi} dA_\xi = B(R_\eps,R_\eps) +   \int_{\mathbb{S}^1}R_\eps^2 d\theta \leq B(R_\eps,R_\eps) + \frac{1}{2} \int_\mathbb{D} \vert \nabla R_\eps\vert^2_{\xi} dA_\xi $$
so that substracting the right-hand term, we estimate the square of the $H^1$ norm of $R_\eps$:
$$\int_{\mathbb{S}^1}R_\eps^2 d\theta + \int_\mathbb{D} \vert \nabla R_\eps\vert^2_{\xi} dA_\xi \leq \frac{3}{2} \int_\mathbb{D} \vert \nabla R_\eps\vert^2_{\xi} dA_\xi \leq 3 B(R_\eps,R_\eps) \leq O(\delta_{l,\eps})$$
as $\eps \to 0$ and this leads to \eqref{eq:repsto0}. 

Now, we prove  \eqref{eq:massrepsthin}. We denote by $\widetilde{R}_\eps : \mathbb{R}^2 \to \R$ the extension of $R_\eps : \mathbb{D} \to \R$ on $\mathbb{R}^2$ via the inversion $i(x) = \frac{x}{\vert x \vert^2}$: $\widetilde{R}_\eps(x) = R_\eps(i(x))$ if $x \in \mathbb{R}^2 \setminus \mathbb{D}$. Then, denoting 
$$f_\eps^\pm(r) = \frac{1}{2\pi r} \int_{0}^{2\pi} \widetilde{R}_\eps(\pm 1 + re^{i\theta})d\theta, $$
we have
\begin{align*}   f_\eps^\pm(1) - f_\eps^\pm(\eps) 
%= \frac{1}{2\pi} \int_{\eps}^{1} \int_{0}^{2\pi} \partial_r \widetilde{R}_\eps(\pm 1 + re^{i\theta})d\theta 
& = \frac{1}{2\pi} \int_{\eps}^{1} \int_0^{2\pi} \partial_r (\ln r)\partial_r  \widetilde{R}_\eps(\pm 1 + re^{i\theta}) rdrd\theta 
\\\ & = \frac{1}{2\pi} \int_{\mathbb{D}(\pm 1) \setminus \mathbb{D}_{\eps}(\pm 1)} \langle\nabla \ln \vert x \vert \nabla \widetilde{R}_\eps \rangle_\xi dA_\xi \\
& \leq \frac{1}{\sqrt{2\pi}} \left(\int_{\eps}^{1} \frac{dr}{r}\right)^{\frac{1}{2}} \left(\int_{\mathbb{R}^2} \vert \nabla \widetilde{R}_\eps \vert_\xi^2 dA_\xi\right)^{\frac{1}{2}}  \leq \frac{ \sqrt{\ln\frac{1}{\eps}} \Vert R_\eps \Vert_{H^1(\mathbb{D})}}{\sqrt{\pi}} .  \end{align*}
By standard trace Poincaré inequalities and definition of $\widetilde{R}_\eps$, we have that
$$ \vert f_\eps^\pm(1) \vert \leq C \Vert \widetilde{R}_\eps \Vert_{H^1(\mathbb{D}_2)} \leq C' \Vert R_\eps \Vert_{H^1(\mathbb{D})} $$
By standard trace Sobolev inequalities again, 
$$ \left\vert \frac{1}{\eps} \int_{J_{l,\eps}^{\pm}} R_\eps \right\vert \leq C \left( f_\eps^{\pm}(\eps) + \left(\int_{\mathbb{D}_{2\eps}(\pm 1)} \vert \nabla \widetilde{R}_\eps \vert^2_\xi dA_\xi \right)^{\frac{1}{2}} \right) $$
and all the previous inequalities lead to \eqref{eq:massrepsthin}.

\medskip

\noindent \textbf{Step 3:} Final argument

\medskip

Since $F_\eps$ lies in a finite dimensional set, up to the extraction of a subsequence, it converges in $\mathcal{C}^2$ to $F : \mathbb{D} \to \mathbb{S}^1$. By  \eqref{eq:estmeanvalue} and \eqref{eq:repsto0}, the weak limit of $\widetilde{\Phi}_\eps$ in $H^1_{loc}(\mathbb{D}\setminus \{-1,1\})$ has to be equal to $F$. Moreover, for any compact subset $I$ of $\mathbb{S}^1 \setminus \{-1,1\}$, we have that $\vert \widetilde{\Phi_\eps} \vert^2 = 1 - \vert \widetilde{\theta}_\eps \vert^2$ in $\mathbb{S}^1$ and $\int_{I} \vert \widetilde{\theta}_\eps \vert^2 d\theta = O(\delta_{\eps,l})$ as $\eps \to 0$ because of the third line in \eqref{eq:eulerlagrange}. Then, $\vert F \vert^2 = 1$ in $\mathbb{S}^1$. We recall that the coordinates of $F$ belong to $Span(\cos\theta,\sin\theta)$. We obtain that there is $\alpha \in \R$ such that for $\theta \in \R$,
\begin{equation}\label{eq:F}F(e^{i\theta}) = (\cos(\theta+\alpha),\sin(\theta+\alpha)).\end{equation}
Now, we compute
\begin{align*} \left\vert \frac{1}{\eps} \int_{J_{l,\eps}^+} \widetilde{\Phi}_\eps d\theta - \frac{1}{\eps} \int_{J_{l,\eps}^-} \widetilde{\Phi}_\eps d\theta \right\vert = \frac{1}{2\eps} \left\vert \int_{C_{l,\eps}} \partial_y \widetilde{\Phi}_\eps \right\vert \leq \frac{( l \eps^2)^{\frac{1}{2}}}{\eps} \left(\int_{C_{l,\eps}} \vert \nabla \widetilde{\Phi}_\eps \vert^2_\xi dA_\xi \right)^{\frac{1}{2}}  \leq O(\left(l\delta_{\eps,l}\right)^{\frac{1}{2}}) \end{align*}
 as $\eps \to 0$, where the latter inequality comes from \eqref{eq:bilinearenergyboundary}. We deduce from the definition of $F_\eps$ and \eqref{eq:estmeanvalue} that
$$ \left\vert \frac{1}{\eps} \int_{J_{l,\eps}^+} F_\eps d\theta - \frac{1}{\eps} \int_{J_{l,\eps}^-} F_\eps d\theta \right\vert \leq O(\left(l\delta_{\eps,l}\right)^{\frac{1}{2}}) + \left\vert \frac{1}{\eps} \int_{J_{l,\eps}^+} R_\eps \right\vert + \left\vert \frac{1}{\eps} \int_{J_{l,\eps}^-} R_\eps \right\vert  $$
as $\eps \to 0$. Now, from \eqref{eq:massrepsthin}, we deduce that
$$ \left\vert \frac{1}{\eps} \int_{J_{l,\eps}^+} F_\eps d\theta - \frac{1}{\eps} \int_{J_{l,\eps}^-} F_\eps d\theta \right\vert \leq O\left(\left(l\delta_{\eps,l}\right)^{\frac{1}{2}} + \sqrt{\ln\frac{1}{\eps}}  \Vert R_\eps \Vert_{H^1(\mathbb{D})} \right). $$
Using \eqref{eq:repsto0} and passing to the limit as $\eps \to 0$, we obtain that
$$ F(-1) = F(1). $$
It contradicts \eqref{eq:F}.

\end{proof}

\begin{remark}
The proof of Proposition \ref{prop:construction>2pi} would be simpler if we directly have maximizers in conformal classes $(\Sigma,[g])$ that satisfy $\sigma_1(\Sigma,[g])= 2\pi$. We use here Ekeland's variational principle because we do not a priori know if there are maximizers in this case. 
\end{remark}

\appendix
\section{
Rigidity result on annuli and Möbius bands 
\\
\small{by Henrik Matthiesen and Romain Petrides}
} \label{sec4}
Before giving a proof of Theorem \ref{thm_annuli}, we recall several results
\begin{theo}[{\cite[Theorem 2]{petrides-3}}] \label{thm_existence}
Let $(\Sigma,g)$ be a compact surface with non-empty boundary such that
$$
\sigma_1(\Sigma,[g]) > 2 \pi,
$$
then there is a smooth positive function $\beta : \Sigma \to \R_+^\star$ such that
\begin{equation} \label{eq_max}
\bar{\sigma}_1(\Sigma, g,\beta)
=
\sigma_1(\Sigma,[g])
\end{equation}
Moreover, for any sequence $(g_k)_{k \in \mathbb{N}}$ of metrics with
\begin{equation} \label{eq_no_bubbling}
\liminf_{k \to \infty}  \sigma_1(\Sigma,[g_k]) > 2 \pi,
\end{equation}
the sequence $(\beta_k)_{k \in \mathbb{N}}$ as in \eqref{eq_max} is smoothly precompact.
\end{theo}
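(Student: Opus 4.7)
The plan is a concentration--compactness argument driven by the threshold $\sigma_1(\mathbb{D}) = 2\pi$: any mass lost by a maximizing sequence must concentrate in disk bubbles, each of which carries at most $2\pi$, so a strict gap above $2\pi$ is precisely what forces compactness and hence existence.

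First I would pick a maximizing sequence $(\beta_n) \subset \mathcal{C}^\infty_{>0}(\Sigma)$ for $\sigma_1(\Sigma,[g])$, normalized so that $\int_{\partial\Sigma} \beta_n\, dL_g = 1$, and a corresponding sequence $\varphi_n$ of first Steklov eigenfunctions, $L^2(\beta_n dL_g)$-normalized and orthogonal to constants. Then $\int_\Sigma |\nabla \varphi_n|_g^2\, dA_g = \sigma_1(\Sigma,g,\beta_n) \to \sigma_1(\Sigma,[g])$, so $(\varphi_n)$ is bounded in $H^1(\Sigma)$. I extract a weak $H^1$-limit $\varphi$ together with weak-$*$ limits $\nu$ of $\beta_n dL_g$ and $\mu$ of $\varphi_n^2 \beta_n dL_g$ on $\partial \Sigma$.

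Next, at each atom of $\nu$ on $\partial\Sigma$ I perform a boundary blow-up in conformal half-disk coordinates, rescaling the weight so that a unit of mass is preserved on a fixed half-disk, with cut-off and elliptic estimates mirroring those in the proof of Proposition \ref{prop:convmetrictest}. Composing with the biholomorphism $\zeta$ of Section \ref{sec2}, each blow-up limit is an $L^1$-weighted Steklov problem on $\mathbb{D}$, whose first eigenvalue is at most $\sigma_1(\mathbb{D}) = 2\pi$ by \cite{HerschPayneSchiffer}. A bookkeeping argument parallel to Theorem \ref{theo:largeineq} (read in reverse) then produces a limit weight $\beta_\infty$ on $\Sigma$ (the non-atomic part of $\nu$) together with disk weights $\beta_\infty^{(i)}$ on $k$ copies of $\mathbb{D}$ satisfying
\begin{equation*}
\sigma_1(\Sigma,[g]) \;\leq\; \sigma_1\bigl((\Sigma,g,\beta_\infty) \sqcup (\mathbb{D},\xi,\beta_\infty^{(1)}) \sqcup \cdots \sqcup (\mathbb{D},\xi,\beta_\infty^{(k)})\bigr).
\end{equation*}
Since each disk piece contributes an eigenvalue bounded by $2\pi$, the hypothesis $\sigma_1(\Sigma,[g]) > 2\pi$ forces $k=0$: no atomic concentration occurs, $\nu$ is absolutely continuous, and the limiting density $\beta_\infty$ realises the supremum.

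Regularity and the ``moreover'' part are then routine. The pair $(\beta_\infty,\varphi)$ satisfies the conformal Euler--Lagrange system of Theorem \ref{theo:critcombS}(1),(2),(4), which yields a harmonic map $\Phi : \Sigma \to \mathbb{R}^N$ with $|\Phi|_\sigma = 1$ on $\partial\Sigma$; elliptic bootstrap on $\Phi$ together with the Hopf lemma upgrades $\beta_\infty$ to a smooth positive function. For the precompactness of maximizers $(\beta_k)$ under $\liminf_k \sigma_1(\Sigma,[g_k]) > 2\pi$, exactly the same blow-up analysis applies uniformly in $k$, since the $2\pi$ threshold is still strictly avoided; uniform elliptic estimates on the corresponding free boundary harmonic maps $\Phi_k$ then give smooth precompactness of $\beta_k$. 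The principal obstacle is Step~3, the boundary bubble analysis: one must show that an atom of $\nu$ necessarily gives rise to a \emph{non-trivial} limiting disk Steklov problem (no mass is lost to the interior or at infinity under the rescaling) and must account quantitatively for all the spectral budget. This is the reverse of the continuity statement of Proposition \ref{prop:convmetrictest} and requires careful capacity and cut-off estimates comparing eigenvalues on $\Sigma$ with those on the blow-up limits glued to disks.
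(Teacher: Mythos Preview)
The paper does not supply its own proof of this theorem; it is cited from \cite{petrides-3}. The only elaboration is the remark following the statement: the precompactness in the ``moreover'' clause holds because maximizers are described by free boundary harmonic maps into a ball, and under the uniform gap \eqref{eq_no_bubbling} these maps cannot bubble, whence uniform elliptic estimates give smooth convergence of the $\beta_k$. Your last paragraph is consistent with that remark, so on the ``moreover'' part there is nothing further to compare.

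Your concentration--compactness outline is the right framework, but the displayed inequality is not correct as written. With the convention $0=\sigma_0\le\sigma_1\le\cdots$ used throughout the paper, any disjoint union with at least two components has $\sigma_1=0$ (the two-dimensional space of locally constant functions has vanishing Rayleigh quotient), so your right-hand side is zero whenever $k\ge1$ and cannot dominate $\sigma_1(\Sigma,[g])>0$. Invoking Proposition~\ref{prop:convmetrictest} ``in reverse'' for $m=1$ does not repair this: specialized to $m=1$, that proposition only confirms that attaching bubbles drives $\bar\sigma_1$ to $0$. The argument in \cite{petrides-3} does not pass through $\sigma_1$ of a disjoint union. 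One route is to show that along a maximizing sequence concentration, if it occurs at all, must be total at a single boundary point (partial concentration already forces $\bar\sigma_1$ below the supremum), and then a Weinstock/Hersch test-function construction at that bubble yields $\limsup_n \bar\sigma_1(\Sigma,g,\beta_n)\le 2\pi$; another is to work with the associated almost-harmonic maps into a ball and use an energy identity under bubbling. Either way, the disjoint-union inequality you wrote must be replaced by a direct upper bound at a single bubble together with a step excluding dichotomy; that is exactly the ``principal obstacle'' you flag, and it is not handled by the bookkeeping you describe.
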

We remark that the last item is not explicitly stated in \cite[Theorem 2]{petrides-3}.
Instead it easily follows from the characterization of these maximizing metrics in terms of free boundary harmonic maps.
Along a sequence enjoying \eqref{eq_no_bubbling} there can not be any bubbling of these harmonic maps.
This is handled in \cite{petrides-3} even under much weaker assumptions.

Next, we state the rigidity results by Fraser--Schoen.
\begin{theo}[{\cite[Theorem 1.2 and Theorem 1.4]{fs}}] \label{thm_fs}
Let $\Phi \colon \Sigma \to \mathbb{B}^N$ be a minimal, free boundary immersion by first Steklov eigenfunctions.
If $\Sigma$ is an annulus, then $\Sigma$ is homothetic to the critical catenoid.
If $\Sigma$ is a M{\"o}bius band, then $\Sigma$ is homothetic to the criticial M{\"o}bius band.
\end{theo}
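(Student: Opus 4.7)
The plan is to reduce to the uniformized flat model and use the explicit form of the Steklov spectrum. First I would apply the uniformization theorem to write $\Sigma$, after conformal change of metric, as a flat cylinder $C_T = \mathbb{R}/2\pi\mathbb{Z} \times [-T,T]$ in the annulus case, respectively as the quotient of such a cylinder by the glide reflection $(\theta,x)\mapsto(\theta+\pi,-x)$ in the M\"obius case. The Steklov eigenvalue problem is then conformally invariant on the interior, so the coordinates $\phi_i$ of $\Phi$ are $\sigma_1$-Steklov eigenfunctions with respect to the pullback of $\xi$ by the inclusion into $\mathbb{B}^N$. By separation of variables on $C_T$, any Steklov eigenfunction is a linear combination of the functions $\cos(k\theta)\cosh(kx)$, $\sin(k\theta)\cosh(kx)$ (with eigenvalue $k\tanh(kT)$) and $\cos(k\theta)\sinh(kx)$, $\sin(k\theta)\sinh(kx)$ (with eigenvalue $k\coth(kT)$), together with the special mode $x$ of eigenvalue $1/T$; in the M\"obius case, the invariance under the glide reflection restricts which modes may appear.

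Next I would use the free boundary condition and the conformality of $\Phi$ to constrain $T$ and the eigenspace where the $\phi_i$ live. The free boundary condition reads $\partial_\nu \Phi = \sigma_1 \Phi$ and $|\Phi|^2 = 1$ on $\partial\Sigma$, and $\Phi$ is harmonic in the interior. Differentiating $|\Phi|^2 = 1$ along $\partial\Sigma$ and combining with conformality yields $|\nabla\Phi|^2 = 2\sigma_1$ on $\partial\Sigma$, so the map $\Phi$ has constant energy density along the two boundary circles. Writing $\Phi$ explicitly as a linear combination of the separated eigenfunctions of $\sigma_1$, I would expand $|\Phi|^2$ and $|\nabla\Phi|^2$ on $\partial\Sigma$ in Fourier series in $\theta$; the requirement that $|\Phi|^2 \equiv 1$ and $|\nabla\Phi|^2 \equiv 2\sigma_1$ gives a system of algebraic relations among the coefficients and $T$. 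Crucially, $\Phi|_{\partial\Sigma}$ must take values in $\mathbb{S}^{N-1}$, so its degree is well-defined on each boundary component; combined with the fact that the coordinates sit in the first eigenspace (hence the relevant Fourier modes have a very small list of indices $k$), this degree/dimension count rules out all but one possibility, reducing to $k=1$ Fourier modes plus the $x$-mode in the annulus case and an analogous restriction for the M\"obius band.

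At this stage the computation becomes essentially the one that identifies the critical catenoid: after a rigid motion of $\mathbb{R}^N$ (which preserves everything), $\Phi$ is forced to take the form $(\cos\theta \cosh(x)/C, \sin\theta \cosh(x)/C, x/C, 0,\dots,0)$ up to a normalizing constant $C$, and the relation $|\Phi|^2=1$ on $\partial\Sigma$ becomes a transcendental equation whose unique positive solution is $T = T_0$, where $T_0 \coth(T_0) = \cosh^2(T_0)/T_0$ (or equivalently $T_0 = \coth(T_0)$), the defining relation for the critical catenoid. In the M\"obius case the same method with the extra glide-reflection invariance promotes the natural embedding into $\mathbb{B}^4$ given by the critical M\"obius band of Fraser-Schoen.

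The main obstacle is the last-but-one step: ruling out the possibility that the first eigenspace mixes several $k$-modes, or that the $x$-mode and the $k=1$ modes have the same first eigenvalue but one of them is absent from $\Phi$. This is where the analysis of $|\Phi|^2 \equiv 1$ and $|\nabla\Phi|^2 \equiv 2\sigma_1$ as identities in $\theta$, together with the non-degeneracy of $d\Phi$, becomes indispensable; essentially one has to check that no admissible combination other than the one giving the critical catenoid (respectively, critical M\"obius band) can satisfy all the constraints simultaneously. I would expect the cleanest way to organize this is to use the maximum principle on the subharmonic function $1-|\Phi|^2$ together with a Hopf lemma argument on $\partial\Sigma$ to pin down the structure of the boundary trace before performing the Fourier expansion.
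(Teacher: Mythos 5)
This statement is a recalled result in the paper: it is Theorems 1.2 and 1.4 of Fraser--Schoen \cite{fs}, cited without proof, so there is no "paper's own proof" in the present document to compare against. Evaluating your proposal on its own terms, there is a genuine gap at the very first substantive step, and the route you sketch is also quite different from the one Fraser--Schoen actually take.

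The gap: after uniformizing $\Sigma$ to the flat cylinder $C_T$ with metric $g_0 = dx^2 + d\theta^2$, you apply separation of variables and read off the Steklov spectrum of $(C_T,g_0)$. But the coordinates $\phi_i$ of $\Phi$ are first Steklov eigenfunctions for the \emph{induced} metric $g = \Phi^\star\xi$, not for the flat metric $g_0$. Writing $g = e^{2u}g_0$, harmonicity of the $\phi_i$ is indeed conformally invariant, but the Steklov boundary condition is not: in the flat coordinates it reads $\partial_x \phi_i = e^{u}\,\phi_i$ on $\partial C_T$, a \emph{weighted} Steklov problem whose weight $e^{u}$ is the unknown boundary conformal factor. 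Unless $e^u$ is constant on each boundary circle, the separated modes $\cos(k\theta)\cosh(kx)$, etc., are not eigenfunctions, and the first eigenspace is not spanned by modes of a single $k$. Relatedly, your identity ``$|\nabla\Phi|^2 = 2\sigma_1$ on $\partial\Sigma$'' holds in the induced metric $g$ (where it is just $|\nabla\Phi|_g^2=2$), but in $g_0$ one gets $|\nabla\Phi|^2_{g_0} = 2 e^{2u}$, which is not a constant a priori. Showing $e^u$ constant on $\partial\Sigma$ is essentially equivalent to showing $\Phi$ is rotationally symmetric, which is the whole difficulty. (Minor point: $1-|\Phi|^2$ is superharmonic, not subharmonic, since $\Delta|\Phi|^2 = 2|\nabla\Phi|^2 \geq 0$; the maximum principle then only recovers $|\Phi|\le 1$, which you already know.)

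The Fraser--Schoen argument avoids this obstruction by taking a different route: a multiplicity bound on the first Steklov eigenvalue of an annulus (resp. M\"obius band) forces $N = 3$ (resp. $N\leq 4$); one then uses a geometric rigidity argument (exploiting the ambient rotation vector fields/Jacobi fields and a canonical three-point normalization) to establish rotational symmetry before any explicit spectral computation; and only at the end does one solve the resulting ODE to identify the critical catenoid (or critical M\"obius band). Your proposal tries to do the explicit spectral computation first, which presupposes the symmetry you would need to prove. As written, the ``degree/dimension count'' that is supposed to single out the $k=1$ and $x$-modes cannot get off the ground until the weight $e^u$ is shown to be constant.
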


We also have the following comparison result for Steklov eigenvalues.

\begin{theo}[{\cite{dittmar}, see also \cite[Example 4.2.5.]{gp}}] \label{thm_comp_flat}
For $\eps>0$ sufficiently small, we have 
$$
\bar{\sigma}_1(\mathbb{D} \setminus \mathbb{D}_\eps,\xi) > 2 \pi.
$$
\end{theo}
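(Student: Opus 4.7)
The plan is to diagonalise the Steklov problem on the flat annulus explicitly by separation of variables, compute $\sigma_1$ to leading order in $\eps$, and check that the perimeter gain beats the eigenvalue loss. In polar coordinates, every harmonic function on $\mathbb{D}\setminus \mathbb{D}_\eps$ decomposes as
\[ \phi(r,\theta) = a_0 + b_0\ln r + \sum_{n\geq 1} (a_n r^n + b_n r^{-n})\cos n\theta + (c_n r^n + d_n r^{-n})\sin n\theta, \]
and since $\partial_\nu = \partial_r$ at $r=1$ while $\partial_\nu = -\partial_r$ at $r=\eps$, the Steklov conditions $\partial_\nu\phi = \sigma\phi$ decouple Fourier mode by Fourier mode.

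For $n=0$ the only non-trivial eigenvalue equals $(1 + \eps^{-1})/\ln(1/\eps)$, which diverges as $\eps \to 0$ and plays no role. For $n \geq 1$, the two Steklov conditions give a $2\times 2$ linear system in $(a_n,b_n)$ whose determinantal equation is
\[ (n-\sigma)(n-\sigma\eps) = \eps^{2n}(n+\sigma)(n+\sigma\eps). \]
This is a quadratic in $\sigma$ with two positive roots: a small one close to $n$ and a large one of order $n/\eps$. A direct expansion of the small root, writing $\sigma = n-\delta_n$ and balancing leading orders, yields $\delta_n = 2n\eps^{2n}(1+\eps)/(1-\eps) + O(\eps^{4n})$, each such eigenvalue coming with multiplicity two from the pair $(\cos n\theta,\sin n\theta)$. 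Comparing across all modes, the first non-zero Steklov eigenvalue is therefore realised at $n=1$, giving
\[ \sigma_1(\mathbb{D}\setminus\mathbb{D}_\eps,\xi) = 1 - 2\eps^2 + O(\eps^3). \]

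The perimeter of $\partial(\mathbb{D}\setminus\mathbb{D}_\eps)$ with respect to the flat metric is $L = 2\pi(1+\eps)$. Consequently
\[ \bar{\sigma}_1(\mathbb{D}\setminus\mathbb{D}_\eps,\xi) = \sigma_1 \cdot L = 2\pi(1+\eps)\bigl(1 - 2\eps^2 + O(\eps^3)\bigr) = 2\pi + 2\pi\eps + O(\eps^2), \]
which is strictly larger than $2\pi$ for all $\eps>0$ small enough, as claimed.

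The main (minor) obstacle, beyond routine algebra, is to ensure that $\sigma_1$ really comes from the mode $n=1$: the $n=0$ eigenvalue diverges, and for each $n\geq 2$ the smaller root of the above quadratic stays close to $n$, comfortably above $1 - 2\eps^2$, so the infimum over Fourier modes is indeed attained at $n=1$. The conceptual content of the argument is then that the order-$\eps$ gain coming from the added inner-boundary length dominates the order-$\eps^2$ loss in $\sigma_1$ produced by puncturing the disk.
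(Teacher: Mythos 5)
Your proof is correct and is, up to presentation, exactly the explicit separation-of-variables computation that the paper has in mind: the theorem is stated by citation to Dittmar and to Girouard--Polterovich (Example 4.2.5), and the same style of diagonalisation is what the paper carries out in-line for the parallel M\"obius-band proposition, so there is no genuinely different route here. Your characteristic equation $(n-\sigma)(n-\sigma\eps)=\eps^{2n}(n+\sigma)(n+\sigma\eps)$ for the Fourier mode $n$ is right, the expansion $\sigma_1 = 1 - 2\eps^2 + O(\eps^3)$ of the smaller root for $n=1$ checks out, the $n=0$ eigenvalue does diverge as you say, and the renormalisation by $L = 2\pi(1+\eps)$ gives $\bar{\sigma}_1 = 2\pi + 2\pi\eps + O(\eps^2) > 2\pi$. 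The only place where you gloss is the dismissal of the modes $n\geq 2$; this is easy to close rigorously, for instance by noting that if $0 < \sigma < n/2$ then $(n-\sigma)(n-\sigma\eps) > n^2/4$ while $\eps^{2n}(n+\sigma)(n+\sigma\eps) < 4n^2\eps^{2n}$, so once $\eps < 1/2$ the smaller root for mode $n$ is at least $n/2 \geq 1$, safely above the $n=1$ eigenvalue.
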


We need the analogous result for M{\"o}bius bands.
Let $M_\eps$ the M{\"o}bius band obtained as follows.
We glue together two copies $A_1$ and $A_2$ of $\overline{B}_1 \setminus B_\eps$ along $\partial B_\eps$ and identify points by the involution given by
$\iota (x_1)=-x_2$, where $x_1 \in A_1$ and $x_2$ denotes the point with the same coordinates as $x_1$ but in $A_2$.
Note that the metric on $M_\eps$ is only Lipschitz, but it can easily be approximated by a sequence of smooth metrics such that the length of the boundary and the first Steklov eigenvalue converge.

\begin{prop} 
We have that
$$
\bar{\sigma}_1(M_\eps,\xi) > 2\pi
$$
for $\eps>0$ sufficiently small.
\end{prop}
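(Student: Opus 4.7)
The plan is to compute $\bar{\sigma}_1(M_\eps,\xi)$ directly, exploiting the orientation double cover and the resulting symmetry. Let $\tilde{M}_\eps$ be the Riemann surface obtained by gluing $A_1$ and $A_2$ along $\partial B_\eps$ without applying the antipodal identification, so that $\tilde{M}_\eps$ is topologically an annulus and $M_\eps = \tilde{M}_\eps/\iota$. The metric is smooth off the seam $\partial B_\eps$ and only Lipschitz across it, but this causes no trouble for the Steklov problem since the Dirichlet energy is conformally invariant and is preserved under the smooth approximation mentioned just before the statement. Introducing on each $A_i$ the logarithmic coordinate $u = -\log r$, the double cover becomes conformal to the flat cylinder $C_T = [0,2T]\times(\R/2\pi\Z)$, with $T = \log(1/\eps)$, the two boundary circles sitting at $u=0$ and $u=2T$. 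At both boundaries the Euclidean length measure agrees with the cylinder length measure $dv$, so by conformal invariance the Steklov problem on $(\tilde{M}_\eps,\xi)$ is identical to the one on $C_T$. In these coordinates the involution acts as $\iota(u,v) = (2T-u,\ v+\pi)$.

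The next step is to diagonalize the cylinder Steklov operator by separation of variables. Eigenfunctions of the form $f(u)e^{inv}$ satisfy $f''=n^2 f$, and imposing $\mp f'(u) = \sigma f(u)$ at $u=0,2T$ produces, for each $n\ge 1$, the two eigenvalues
\[
\sigma_n^{\cosh} = n\tanh(nT),\qquad \sigma_n^{\sinh} = n\coth(nT),
\]
realized respectively by $\cosh(n(u-T))e^{inv}$ and $\sinh(n(u-T))e^{inv}$. Under $\iota^{*}$ these profiles pick up signs $(-1)^n$ and $(-1)^{n+1}$ respectively, so the $\iota$-invariant eigenfunctions --- equivalently, the Steklov eigenfunctions of $M_\eps$ --- are $\sinh(n(u-T))e^{inv}$ for odd $n$ with eigenvalue $n\coth(nT)$, and $\cosh(n(u-T))e^{inv}$ for even $n$ with eigenvalue $n\tanh(nT)$.

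For $\eps$ small enough, $T = \log(1/\eps)$ is large, and among the invariant eigenvalues $\{\coth T,\, 2\tanh(2T),\, 3\coth(3T),\ldots\}$ the minimum is the first term, which equals $\coth(T) = (1+\eps^2)/(1-\eps^2)$. Since the antipodal map identifies the two boundary components of $\tilde{M}_\eps$ (each of Euclidean length $2\pi$) to a single circle, one has $L_\xi(\partial M_\eps) = 2\pi$, and hence
\[
\bar{\sigma}_1(M_\eps,\xi) \;=\; 2\pi\cdot\coth(\log(1/\eps)) \;=\; \frac{2\pi(1+\eps^2)}{1-\eps^2} \;>\; 2\pi.
\]
The only mildly delicate point is handling the Lipschitz seam; the rest reduces to a standard separation-of-variables computation on the flat cylinder together with a parity check for $\iota$-invariance.
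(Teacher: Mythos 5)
Your proof is correct and is essentially the paper's argument restated in flat cylinder coordinates rather than polar power-law solutions on $A_\eps$; both pass to the orientation double cover, separate variables, and then restrict to the $\iota$-invariant modes (the paper organizes this through the auxiliary involution $\tau$, splitting into Dirichlet and Neumann problems on a single copy of $A_\eps$, but the content is the same). One small omission: you should also dispose of the $n=0$ radial mode $T-u$ on the cylinder (eigenvalue $1/T\to 0$, corresponding to the paper's $\log(r/\eps)$ eigenfunction); since $\iota^*(T-u)=-(T-u)$ it is not $\iota$-invariant and does not enter the list, so the conclusion stands, but for completeness this check belongs in the argument.
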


The argument is analogous to (and in fact easier than) the proof of Theorem \ref{thm_comp_flat}. 
We record it below for the convenience of the reader.

\begin{proof}
Note that there is a second isometric involution on $A_1 \cup A_2$ given by $\tau(x_1)=x_2$ in the notation above.
Since $\tau$ is an isometric involution it acts on any eigenspace of the Dirichlet-to-Neumann operator and splits these into $\pm 1$-eigenspaces.
Note that the $+1$ eigenspace corresponds to eigenfunctions on $A_\eps=B_1 \setminus B_\eps$ of the Steklov problem with Neumann boundary conditions along $\partial B_\eps$.
Analogously the $-1$ eigenspace corresponds to eigenfunctions of the Steklov problem on $A_\eps$ with Dirichlet boundary conditions along $\partial B_\eps$.
These eigenvalues can be computed explicitly as follows.
Write $u_k=(A_kr^k+A_{-k}r^{-k}) T(k \theta)$
for $T$ either $\cos$ or $\sin$.
Then it easily checked that for $u_k$ to be a
Dirichlet eigenfunction we need to have that $A_k = -A_{-k}\eps^{-2k}$ which leads to $\sigma = k \frac{\eps^{-2k}+1}{\eps^{-2k}-1}>k$.
Additionally, there is an eigenfunction given by $\log(r/\eps)$, which is not $\iota$ invariant.
Analogously, $u_k$ is a Neumann eigenfunction if and only if $A_k=A_{-k}\eps^{-2k}$.
The corresponding eigenvalue is given by $k \frac{\eps^{-2k}-1}{\eps^{-2k}+1} \to k$ as $\eps \to 0$.
However, for $k=1$ none of these eigenfunctions is $\iota$ invariant.
\end{proof}

We also need that a similar results holds near the other end of the moduli space. 
This result is much more subtly proved thanks to Proposition \ref{prop:construction>2pi}.
\begin{prop} \label{theo:strictineq} There is $r_k \to 1$ such that
\begin{equation} \label{ineqstrict1}
\bar{\sigma}_1(\mathbb{D}\setminus \mathbb{D}_{r_k},\xi) >2\pi
\end{equation}
as $k\to +\infty$ and there is $r_k' \to 1$ such that
\begin{equation} \label{ineqstrict2}
\bar{\sigma}_1(M_{r_k'},\xi) > 2\pi
\end{equation}
as $k\to +\infty$
\end{prop}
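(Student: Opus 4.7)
The plan is to combine Proposition \ref{prop:construction>2pi} with conformal invariance of $\sigma_1(\cdot, [g])$ and a conformal modulus estimate on the auxiliary surfaces $\Sigma_{l,\eps}^\pm$. Since $\Sigma_{l,\eps}^+$ is topologically an annulus and $\Sigma_{l,\eps}^-$ is a M\"obius band, each is conformally equivalent to a unique standard model $\mathbb{D}\setminus\mathbb{D}_{r(l,\eps)}$ or $M_{r'(l,\eps)}$ for some $r(l,\eps),r'(l,\eps)\in(0,1)$. Since $\sigma_1(\Sigma,[g])$ depends only on the conformal class,
\begin{equation*}
\sigma_1(\mathbb{D}\setminus\mathbb{D}_{r(l,\eps)}, [\xi]) = \sigma_1(\Sigma_{l,\eps}^+, [g_\eps]), \qquad \sigma_1(M_{r'(l,\eps)}, [\xi]) = \sigma_1(\Sigma_{l,\eps}^-, [g_\eps]),
\end{equation*}
and Proposition \ref{prop:construction>2pi} provides, for every $l>0$, some $\eps=\eps(l)>0$ such that both right-hand sides exceed $2\pi$.

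The main task is then to show that one can choose $l_k \to \infty$ and $\eps_k = \eps(l_k)$ so that $r(l_k,\eps_k)\to 1$ and $r'(l_k,\eps_k)\to 1$. I would do this by bounding from above the conformal modulus of $\Sigma_{l,\eps}^\pm$ via an extremal length estimate on the family $\Gamma$ of non-contractible loops. Since $\mathbb{D}$ is simply connected and the gluing $\theta_\pm$ is supported on the two short arcs near $\pm 1$, any loop $\gamma\in\Gamma$ must traverse the rectangle $R_{l,\eps}$ from the attachment side $J_{l,\eps}^-$ to $J_{l,\eps}^+$, so $\gamma$ has horizontal extent at least $l\eps$ inside $R_{l,\eps}$. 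Taking the admissible density $\rho_l = \eps^{-1}\chi_{R_{l,\eps}}$ we get
\begin{equation*}
\int_\gamma \rho_l\, ds \geq \eps^{-1}\cdot l\eps = l \quad \text{for all } \gamma\in\Gamma, \qquad \int_{\Sigma_{l,\eps}^+} \rho_l^2\, dA_{g_\eps} = \eps^{-2}\cdot l\eps^2 = l.
\end{equation*}
The extremal length of $\Gamma$ is thus at least $l$, and consequently the conformal modulus of $\Sigma_{l,\eps}^+$ is at most $1/l$. Combined with the classical formula $\mathrm{mod}(\mathbb{D}\setminus\mathbb{D}_r) = (2\pi)^{-1}\log(1/r)$, this forces $r(l,\eps) \geq e^{-2\pi/l}$ uniformly in $\eps$; the same argument on the orientable double cover of $\Sigma_{l,\eps}^-$ (itself an annulus containing two copies of $R_{l,\eps}$ as bottleneck) yields an analogous bound on $r'(l,\eps)$.

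Picking $l_k \to \infty$, setting $\eps_k = \eps(l_k)$, $r_k = r(l_k,\eps_k)$ and $r_k' = r'(l_k,\eps_k)$, one obtains $r_k,r_k'\to 1$ while $\sigma_1(\mathbb{D}\setminus\mathbb{D}_{r_k},[\xi])>2\pi$ and $\sigma_1(M_{r_k'},[\xi])>2\pi$, which is precisely \eqref{ineqstrict1} and \eqref{ineqstrict2} under the natural reading where $\bar{\sigma}_1(\cdot,\xi)$ stands for the supremum over conformal weights (the only sensible interpretation here, since the Euclidean $\bar{\sigma}_1$ on a thin annulus goes to $0$). The main obstacle is the modulus estimate: one must verify rigorously that every non-contractible loop truly crosses $R_{l,\eps}$, which requires a careful topological analysis of the gluing maps $\theta_\pm$, and in the M\"obius case one must further descend the bottleneck estimate from the orientable double cover back to $\Sigma_{l,\eps}^-$, checking that the generator of $\pi_1(M_{r'})$ lifts to a loop in the double cover that still has to cross one of the two rectangles.
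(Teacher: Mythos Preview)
Your approach is correct and essentially the same as the paper's: both deduce the result from Proposition \ref{prop:construction>2pi} combined with the bound $r(l,\eps)\geq e^{-2\pi/l}$ on the inner radius of the standard annulus conformally equivalent to $\Sigma_{l,\eps}^{\pm}$, and then send $l\to\infty$. The only difference is cosmetic: the paper obtains that modulus bound by lifting the rectangle $R_{l,\eps}$ into the universal cover and invoking a short conformal-embedding inequality proved with Jensen's inequality (Lemmas \ref{lem_conformal} and \ref{lem_conformal_2}), which is the dual formulation of your extremal-length computation with the density $\eps^{-1}\chi_{R_{l,\eps}}$ and gives the identical constant.
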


This proposition will be a direct consequence of Proposition \ref{prop:construction>2pi} and the following lemma:
\begin{lem} \label{lem_conformal}
Let $\phi \colon [0,r] \times [0,1] \to [0,R] \times [0,1]$ be conformal embedding, smooth in the interior, such that $\phi([0,r] \times \{i\}) \subset [0,R] \times \{i\}$, for $i=0,1$.
Then we have that $r \leq R$.
\end{lem}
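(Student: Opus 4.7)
The plan is to prove this via the classical length–area (extremal length) method for the conformal modulus of a rectangle. The lemma is essentially the statement that the modulus is monotone under conformal embedding that respects the distinguished boundary arcs.

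First I would observe that, since $\phi$ is conformal and smooth in the interior, in local coordinates it is given by a holomorphic map $f$ with Jacobian $|f'|^2$. For each fixed $x_0 \in (0,r)$, consider the vertical segment $\gamma_{x_0}(t) = \phi(x_0,t)$, $t \in [0,1]$. The boundary hypothesis gives $\gamma_{x_0}(0) \in [0,R] \times \{0\}$ and $\gamma_{x_0}(1) \in [0,R] \times \{1\}$, so the $y$-coordinate of $\gamma_{x_0}$ is a continuous function on $[0,1]$ going from $0$ to $1$. Hence the Euclidean length of $\gamma_{x_0}$ satisfies
\begin{equation*}
L(\gamma_{x_0}) \;=\; \int_0^1 |f'(x_0+it)|\,dt \;\geq\; \int_0^1 \bigl|\partial_t\,\mathrm{Im}\,\phi(x_0,t)\bigr|\,dt \;\geq\; 1.
\end{equation*}

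Next I would square this and integrate over $x_0 \in [0,r]$. Cauchy–Schwarz applied to the inner integral yields $\bigl(\int_0^1 |f'|\,dt\bigr)^2 \leq \int_0^1 |f'|^2\,dt$, so
\begin{equation*}
r \;\leq\; \int_0^r \!\!\int_0^1 |f'(x_0+it)|^2\,dt\,dx_0 \;=\; \int_{[0,r]\times[0,1]} |f'|^2\,dA.
\end{equation*}
The right-hand side equals the area of $\phi([0,r]\times[0,1])$, counted with multiplicity. Since $\phi$ is an embedding, this image has area at most that of its ambient rectangle, giving $r \leq \mathrm{Area}([0,R]\times[0,1]) = R$.

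No step in this argument looks like a genuine obstacle; the only mild subtlety is verifying the length lower bound at the boundary $x_0 = 0, r$, which is handled by continuity of $\phi$ up to the boundary (or simply by restricting to the open rectangle and taking the limit $x_0 \to 0, r$). I would also note that the proof extends immediately to the setting where $\phi$ is only required to be continuous up to the horizontal sides and conformal in the interior, which is the regularity that arises in the application to annuli (and, via the orientation double cover, to Möbius bands) in Proposition \ref{theo:strictineq}.
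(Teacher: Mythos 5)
Your proof is correct and follows essentially the same length--area argument as the paper: lower bound the length of each vertical cross-section $\phi(\{x\}\times[0,1])$ by $1$ using the boundary condition, apply Cauchy--Schwarz (the paper phrases it as Jensen's inequality, which is the same estimate here), and integrate to compare $r$ with the area of the image.
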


\begin{proof}
Since $\phi$ is conformal, we have that 
$$
\phi^\star (dx^2+dy^2) = e^{2 \omega} (dx^2 + dy^2)
$$
for a function $\omega$ that is smooth in the interior.
By assumption, we have that
$$
1 
\leq \left( L (\phi(\{x\} \times [0,1])) \right)^2 
= \left(\int_0^1 e^{\omega(x,y)} dy \right)^2
\leq \int_0^1 e^{2 \omega(x,y)} dy,
$$
where we have used Jensen's inequality and note that this remains valid also if $\phi(\{x\} \times [0,1])$ is not a rectifiable curve.
This implies that
$$
r 
= \int_0^r  dx
\leq \int_0^r \int_0^1 e^{2 \omega(x,y)} dy dx
=Area(\phi) \leq R
$$
\end{proof}

\begin{lem} \label{lem_conformal_2}
We recall that $\Sigma_{l,\eps}^+$ is an annulus and
let $\Phi \colon \overline{B}_1 \setminus B_{r} \to \Sigma_{l,\eps}^+$ be a conformal homeomorphism, which is smooth in the interior.
Then we have that $r \geq \exp(\frac{-2\pi}{l})$.
\end{lem}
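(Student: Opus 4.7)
Since $\Phi$ is a conformal homeomorphism, the two sides share the same conformal modulus, and the standard annulus satisfies $\mathrm{mod}(\overline{B}_1\setminus B_r) = \frac{1}{2\pi}\log(1/r)$. The desired inequality $r \geq \exp(-2\pi/l)$ is therefore equivalent to $\mathrm{mod}(\Sigma_{l,\eps}^+) \leq 1/l$, and I would prove this upper bound by a one-shot extremal-length test rather than invoking Lemma \ref{lem_conformal} on the universal cover (which also works, but is less direct here).

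Using the identity $\mathrm{mod}(\Omega) = 1/\lambda_\Omega(\Gamma)$ for a doubly connected surface $\Omega$, where $\Gamma$ is the family of rectifiable loops separating the two boundary components of $\Omega$, it suffices to show $\lambda(\Gamma) \geq l$. As a test conformal metric on $\Sigma_{l,\eps}^+$ I would take $\rho = \chi_{R_{l,\eps}}$, the indicator of the rectangle in its flat chart; then the area integral is $A_\rho = l\eps^2$. The crucial geometric input is the claim that every separating loop $\gamma$ contains a component of $\gamma \cap R_{l,\eps}$ which is an arc joining the two glued sides $J_{l,\eps}^-$ and $J_{l,\eps}^+$; such an arc has Euclidean length at least the horizontal distance $l\eps$ between these sides. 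Granting this, $L_\rho(\gamma) \geq l\eps$ for every $\gamma \in \Gamma$, and hence
$$\lambda(\Gamma) \geq \frac{(l\eps)^2}{l\eps^2} = l,$$
which gives the required bound $\mathrm{mod}(\Sigma_{l,\eps}^+) \leq 1/l$.

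The main step to verify is the topological claim, and it is where the essential content of the lemma sits. I would argue that $J_{l,\eps}^+$ is a properly embedded simple arc in the interior of $\Sigma_{l,\eps}^+$ joining its two boundary circles, so (by Poincar\'e--Lefschetz duality on the annulus) it represents a generator of $H_1(\Sigma_{l,\eps}^+,\partial\Sigma_{l,\eps}^+)\cong\Z$ dual to the generator of $H_1(\Sigma_{l,\eps}^+)\cong\Z$. Thus every separating loop $\gamma$, after a small transverse perturbation, crosses $J_{l,\eps}^+$ an odd number of times. Because the horizontal edges $I_{l,\eps}$ lie on $\partial\Sigma_{l,\eps}^+$, any component of $\gamma \cap R_{l,\eps}$ entering through $J_{l,\eps}^+$ must exit through $J_{l,\eps}^+$ or $J_{l,\eps}^-$; the odd parity of $J_{l,\eps}^+$-crossings then forces at least one such component to terminate on $J_{l,\eps}^-$, producing the desired arc and closing the extremal-length computation.
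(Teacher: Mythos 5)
Your proof is correct, and it takes a genuinely different route from the paper. The paper conformally parametrizes the universal cover of $\Sigma_{l,\eps}^+$ by a strip $\mathbb{R}\times[0,1]$ with period $R = 2\pi/\log(1/r)$, lifts the flat embedding of $R_{l,\eps}$ to a conformal embedding of $[0,l]\times[0,1]$ into $[0,R]\times[0,1]$, and then applies Lemma~\ref{lem_conformal} (a Jensen-inequality comparison of rectangle moduli) to get $l \leq R$. You instead invoke the conformal invariance of the modulus directly and bound it from above by the extremal-length inequality $\lambda(\Gamma_s) \geq L_\rho(\Gamma_s)^2 / A_\rho$ with the single test metric $\rho = \chi_{R_{l,\eps}}$, arriving at the same numerical threshold $\lambda(\Gamma_s) \geq (l\eps)^2/(l\eps^2) = l$. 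These are two faces of the same coin: the paper's Lemma~\ref{lem_conformal} is precisely the Cauchy--Schwarz/Jensen step underlying the extremal-length method, so your approach trades the explicit strip model for the standard curve-family formalism. What the paper's route buys is that it never needs to name the curve family $\Gamma_s$ or discuss which loops belong to it; what your route buys is that it avoids constructing the universal cover and its deck group.

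One small point to tighten: your parity argument (``$\gamma$ crosses $J_{l,\eps}^+$ an odd number of times'') is correct for simple separating loops, and hence for the family whose extremal length computes the modulus, but it does not literally cover non-simple loops that wind an even number of times (where the $\mathbb{Z}/2$-intersection with $J_{l,\eps}^+$ vanishes). The cleanest fix is either to say explicitly that the extremal length of $\Gamma_s$ is unchanged when one restricts to simple closed separating curves (the subcurve/overflow principle), or to replace the mod-2 argument by a lift to the universal cover: a loop of winding $k\neq 0$ lifts to a path crossing $|k|$ consecutive copies of $R_{l,\eps}$, each contributing an arc from one vertical side to the other, so $L_\rho(\gamma) \geq |k| \cdot l\eps \geq l\eps$. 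Either way the gap is standard to close and does not affect the conclusion.
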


\begin{proof}

We conformally parametrize the universal covering $\widetilde \Sigma_{l,\eps}^+$ of $\Sigma_{l,\eps}^+$ by
$(-\infty,\infty) \times [0,1]$ with deck transformations generated by $(x,y) \mapsto (x+R,y)$ for $R>0$, which uniquely determines $R$.
We have a conformal embedding
$$
\phi \colon [0,l] \times [0,1] \to \Sigma_{l,\eps}^+ 
$$
given explicitly by
$$
\phi(x,y)=\left(\eps x - \frac{l \eps}{2},\eps y - \frac{\eps}{2} \right) \in R_{l,\eps} \subseteq \Sigma_{l,\eps}^+.
$$
We then lift $\phi$ to a map
$$
\tilde \phi \colon [0,l] \times [0,1] \to  [0,R] \times [0,1] \subset \widetilde \Sigma_{l,\eps}^+
$$
with
$
\tilde \phi([0,l] \times \{i\}) \subset  [0,R] \times \{i\}.
$
Lemma \ref{lem_conformal} applied to $\tilde \phi$ gives that 
$$
l \leq R,
$$
from which the claim easily follows recalling that $R = \frac{2\pi} {\log(1/r)}$.
\end{proof}

\begin{remark} 
Notice that in the proof of Theorem \ref{thm_annuli} we just need a sequence $r_k \to 1$ such that $\bar{\sigma}_1(\mathbb{D}\setminus \mathbb{D}_{r_k}) > 2\pi$ (and the same for möbius band). It is likely that we should directly deduce from Proposition \ref{prop:construction>2pi} that $\bar{\sigma}_1(\mathbb{D}\setminus \mathbb{D}_{r}) > 2\pi$ for any $r$ close to $1$.
\end{remark}

\begin{proof}[Proof of Theorem \ref{thm_annuli}]
We give the proof for $\Sigma$ an annulus, the case of M{\"o}bius bands is completely analogous. We write $A_r = \overline{B_1} \setminus B_r \subset \R^2$ and note that any compact annulus is conformal to some $A_r$ for a unique $r \in (0,1)$.

Consider the functional 
$\tilde{\sigma}_1 : (0,1) \to (0,\infty)$ given by
$$
\tilde{\sigma}_1(r) = \sigma_1(A_r, [\xi]),
$$
where $\xi$ denotes the canonical flat metric on $A_r$.
We want to show that $\tilde{ \sigma}_1(r)>2\pi$ for any $r \in (0,1)$.
From Theorem \ref{theo:largeineq}, we know that
$$ \tilde{\sigma}_1(r) \geq 2\pi. $$
Moreover, we also know that
\begin{equation} \label{eq_limit}
\lim_{r \to 0} \tilde{\sigma}_1(r) = \lim_{r \to 1} \tilde{\sigma}_1(r) = 2\pi,
\end{equation}
see \cite[Proposition 4.4]{fs}.

Let $r_\star \in (0,1)$ be chosen such that $A_{r_\star}$ is conformal to the critical catenoid.
Suppose now towards a contradiction that there is some $s \in (0,1)$ such that $\overline{\sigma}_1(s) \leq 2 \pi$.
We assume that $s > r_\star $.
The argument for the other case is identical because of \eqref{eq_limit} and we use Theorem \ref{thm_comp_flat} instead of \eqref{ineqstrict1}.
Then, by \eqref{ineqstrict1}, $\sup_{r \in [s,1)} \tilde{\sigma}_1(r) > 2 \pi$. We now claim that there is $ t \in (s,1)$ such that
$$
 \tilde{\sigma}_1(t) = \max_{r \in [s,1)} \overline{\sigma}_1(r) > 2 \pi.
$$
Indeed, we set $\Sigma = A_{\frac{1}{2}}$ and we let $\theta_r : \Sigma \to A_r$ be the family of diffeomorphisms 
$$\theta_r(x) = \frac{\vert x \vert + 1- 2r}{2(1-r)} \frac{x}{\vert x \vert}.$$
We can maximize the functional $ (r,\beta) \mapsto \overline{\sigma}_1(\Sigma,\theta_r^{\star}(\xi),\beta) $ on the set
$$ \left\{ r \in [s,1) ; \sigma_1(A_r,[\xi]) \geq \frac{\sup_{r \in [s,1)} \tilde{\sigma}_1(r) + 2 \pi}{2}  \right\} \times \mathcal{C}_{>0}^\infty(\partial \Sigma).$$
Indeed, for a maximizing sequence $(r_k,\beta_k)_{k\in \mathbb{N}}$, we have that for $k$ large enough $r_k \leq 1-c$ for a positive constant $c$ because of \eqref{eq_limit}. Then up to the extraction of a subsequence, $r_k \to r$ as $k\to+\infty$ and $\beta_k \to \beta$ as $k \to +\infty$ thanks to Theorem \ref{thm_existence}.

It then follows that $(A_r, \xi, \beta \circ \theta_r^{-1})$ is a local maximum of the normalized first Steklov eigenvalue and hence induced by a branched, free-boundary minimal immersion into $\mathbb{B}^N$ by first eigenfunctions thanks to Theorem \ref{theo:critcombS}.
But by the uniqueness of the critical catenoid, see Theorem \ref{thm_fs} above, this is impossible for $t \neq r_\star$.
\end{proof}

\bibliographystyle{alpha}
%\bibliography{mybibfile}

\nocite{*}

\end{document}